\documentclass[leqno]{amsart}
\usepackage{amsfonts,amssymb,amsmath,amsgen,amsthm}
\usepackage{hyperref,color}

\theoremstyle{plain}
\newtheorem{theorem}{Theorem}[section]

\newtheorem{lemma}[theorem]{Lemma}
\newtheorem{corollary}[theorem]{Corollary}
\newtheorem{proposition}[theorem]{Proposition}
\newtheorem{hyp}[theorem]{Assumption}
\theoremstyle{remark}
\newtheorem{remark}[theorem]{Remark}

\newtheorem{example}[theorem]{Example}

\def\R{{\mathbf R}}
\def\N{{\mathbf N}}
\def\Sch{{\mathcal S}}
\def\O{\mathcal O}
\def\F{\mathcal F}
\def\L{\mathcal L}

\def\({\left(}
\def\){\right)}
\def\<{\left\langle}
\def\>{\right\rangle}
\def\le{\leqslant}
\def\ge{\geqslant}

\def\bv{{\bf v}}
\def\1{{\mathbf 1}}

\def\d{{\partial}}
\def\eps{\varepsilon}

\def\si{{\sigma}}

\def\Dt{{\Delta t}}
\def\E{{\mathcal E}}
\DeclareMathOperator{\RE}{Re}
\DeclareMathOperator{\IM}{Im}
\DeclareMathOperator{\DIV}{div}

\numberwithin{equation}{section}

\begin{document}

\title[Time splitting for semi-classical NLS]{On Fourier time-splitting methods
  for nonlinear Schr\"odinger equations
  in the semi-classical limit}
\author[R. Carles]{R\'emi Carles}
\address{CNRS \& Univ. Montpellier~2\\Math\'ematiques
\\CC~051\\34095 Montpellier\\ France}
\email{Remi.Carles@math.cnrs.fr}
\begin{abstract}
We prove an error estimate for a Lie-Trotter splitting
operator associated to the Schr\"odinger-Poisson equation in the
semiclassical regime, when the WKB approximation is
valid. In finite time, and so
long as the solution to a 
compressible Euler-Poisson equation is smooth, the
error between the numerical solution and the exact solution is
controlled in Sobolev spaces, in a suitable phase/amplitude
representation. As a corollary, we infer the numerical 
convergence of the quadratic observables with a time step independent
of the Planck constant. A similar result is
established for the nonlinear Schr\"odinger equation in the weakly
nonlinear regime. 
\end{abstract}
\thanks{This work was supported by the French ANR projects
  BECASIM (ANR-12-MONU-0007-04) and SchEq (ANR-12-JS01-0005-01).}
\maketitle

\section{Introduction}
\label{sec:intro}

We consider the nonlinear Schr\"odinger equation, for $t\ge 0$,
\begin{equation}
  \label{eq:nls}
  i\eps \d_t u^\eps +\frac{\eps^2}{2}\Delta u^\eps =
 \eps^\alpha f\(|u^\eps|^2\) u^\eps.
\end{equation}
The function
$u^\eps=u^\eps(t,x)$ is complex-valued, and the space variable $x$
belongs to $\R^d$. The presence of the
parameter $\eps$ is motivated by the \emph{semi-classical limit},
$\eps\to 0$. Physically, $\eps$ corresponds to a small ratio between
microscopic and macroscopic quantities, so the limit
  $\eps \to 0$ is expected to
yield a relevant approximation; see e.g. \cite{JiMaSp11} and
references therein. 
The parameter $\alpha\ge 0$ measures the strength
of nonlinear interactions: in the WKB regime, which is recalled below,
the nonlinearity is negligible if $\alpha>1$, it has a leading order
(moderate) influence if $\alpha=1$ (weakly nonlinear regime), and its
influence is very strong in the regime $\eps\to 0$ if $\alpha=0$. The
case $0<\alpha<1$ is not considered here, but it should be considered
as similar to the case $\alpha=0$ (\cite{CaBook}). 
\smallbreak

In this paper, we consider mostly two families of nonlinearity:
\begin{itemize}
\item Nonlocal nonlinearity in the case $\alpha=0$: $f(\rho) = K\ast
  \rho$.
\item Local or nonlocal nonlinearity in the case $\alpha\ge 1$. 
\end{itemize}
The first case includes the Schr\"odinger-Poisson system in space
dimension $d\ge 3$ ($f(\rho)=\lambda\Delta^{-1}\rho$, $\lambda \in
\R$, hence
$K(x)=\lambda c_d/|x|^{d-2}$). The second case includes 
the cubic nonlinearity (focusing or defocusing). We will also discuss
 why the case of the cubic nonlinearity is not
treated in the regime $\alpha=0$ (see Remark~\ref{rem:loss}). 
Our analysis is limited to bounded time intervals, so
  the exponentials are controlled, which is the reason why we do not
  keep track of such factors when they are eventually included in a
  uniform constant. The reason why the analysis is 
  bound to finite time intervals is, in the case $\alpha=0$, that the
  solution of the 
  Euler-Poisson equation generically develops a singularity in finite time,
  and, in the case $\alpha\ge 1$, that the solution of the
  Burgers' equation generically develops a singularity in finite time.
\smallbreak

The initial data that we consider are of WKB type:
\begin{equation}
  \label{eq:ci}
  u^\eps(0,x) = a_0(x)e^{i\phi_0(x)/\eps}. 
\end{equation}
An important well-known property of this framework is related to the
following quantities (quadratic observables), 
\begin{align*}
  \text{Position density: }& \rho^\eps(t,x)=|u^\eps(t,x)|^2.\\
\text{Current density: }& J^\eps(t,x) = \eps \IM\(\overline
u^\eps(t,x)\nabla u^\eps(t,x)\).
\end{align*}
Consider the case of Schr\"odinger-Poisson system in dimension $d\ge
3$, with $\alpha=0$. 
Formally,  $\rho^\eps$ and $J^\eps$
converge to the solution of the compressible Euler-Poisson equation
\begin{equation}
  \label{eq:euler1}
  \left\{
    \begin{aligned}
      &\d_t \rho +\DIV J =0 ;\quad \rho_{\mid t=0} = |a_0|^2,\\
&\d_t J + \DIV \(\frac{J\otimes J}{\rho}\) + \rho \nabla P=0;\quad
J_{\mid t=0} = |a_0|^2\nabla \phi_0,\\
&\Delta P = \lambda \rho,\quad P(t,x),\nabla P(t,x)\to 0\text{ as
}|x|\to \infty.  
    \end{aligned}
\right.
\end{equation}
 See e.g. \cite{BrezziMarkowich,ZhangSIMA} for a rigorous statement of
 this result.  
\subsection{Fourier time-splitting methods}
\label{sec:split}

When simulating numerically \eqref{eq:nls}, the size of $\eps$ becomes
an important parameter: if the nonlinearity $f$ is
replaced by an 
external potential $V(x)$ (independent of $u^\eps$), then it was
proved in \cite{MPP} that finite difference approximation requires to
consider a time step $\Delta t=o(\eps)$ in order to recover the above
quadratic observables. In \cite{BJM1}, it was proved that these
quadratic observables can be accurately recovered for time steps
independent of $\eps$, if time splitting methods are
considered and $V$ is bounded as well as all its
derivatives. Moreover, if $\Delta t =o(\eps)$, then the wave function 
$u^\eps$ itself is well approximated; see also
\cite[Theorem~2]{DeTh10}. In the appendix, we extend this result to
the case of unbounded potentials, which grow at most quadratically in
space. 
\smallbreak

In the nonlinear framework
\eqref{eq:nls}, numerical experiences in \cite{BJM2} suggest that
considering $\Delta t = \O(\eps)$ is enough to recover the correct
observables for time splitting spectral methods, when
$\alpha=1$, or $\alpha=0$ with a defocusing nonlinearity. In the
references mentioned so far, space 
discretization is considered too: in the present paper, we shall
discuss only the time discretization, hence the above restrictions.   
In the recent paper \cite{DeTh13}, some precise local error estimates
have been established, showing that the assumption $\Delta t =
\O(\eps)$ is a sensible assumption for the local error to behave
properly. We underscore that if crucial, the local error estimate is
not sufficient to obtain a global error estimate, unlike in
\cite{Lu08}, because of rapid oscillations. 
\smallbreak

We now briefly recall what time splitting methods consist in, in the
context of \eqref{eq:nls}. The remark is that if the Laplacian or the
nonlinearity is discarded in \eqref{eq:nls}, then the equation becomes
explicitly solvable. We denote by $X^t_\eps$ the map
$v^\eps(0,\cdot)\mapsto v^\eps(t,\cdot)$, where
\begin{equation}\label{eq:linear}
  i\eps\d_t v^\eps +\frac{\eps^2}{2}\Delta v^\eps=0. 
\end{equation}
The above equation
is solved explicitly by using the Fourier transform (defined in
Assumption~\ref{hyp:r3} below),
since it becomes an ordinary differential equation
\begin{equation}\label{eq:X}
  i\eps \d_t \widehat v^\eps -\frac{\eps^2}{2}|\xi|^2 \widehat v^\eps =0,
\end{equation}
hence
\begin{equation*}
  \widehat {X^t_\eps v}(\xi)=e^{-i\eps \frac{t}{2}|\xi|^2}\widehat v(\xi). 
\end{equation*}
If we now denote by $Y^t_\eps$ the map
$w^\eps(0,\cdot)\mapsto w^\eps(t,\cdot)$, where
\begin{equation}\label{eq:Y}
  i\eps\d_t w^\eps = \eps^\alpha f\(|w^\eps|^2\)w^\eps, 
\end{equation}
then we remark that since $f$ is real-valued, the modulus of $w^\eps$
does not depend on time, hence
\begin{equation}\label{eq:Yexpl}
  Y^t_\eps w(x) = w(x) e^{-i\eps^\alpha \frac{t}{\eps} f(|w(x)|^2)}.
\end{equation}
At this stage, it is already clear that whether $\alpha \ge 1$ or
$\alpha<1$, the estimates for $Y^t_\eps$ will be rather different. 
We shall denote by $S^t_\eps$ the nonlinear flow associated to
\eqref{eq:nls}: $S^t_\eps u^\eps(0,\cdot) = u^\eps(t,\cdot)$. 
\smallbreak

We consider the Lie-type splitting operator
\begin{equation}\label{eq:Z}
  Z_{\eps}^t =Y^t_\eps X^t_\eps ,
\end{equation}
for which calculations will be less involved than for the Strang-type
splitting operator 
\begin{equation*}
  Z_{\eps,S}^t= X_\eps^{t/2}Y_\eps^t X_\eps^{t/2}.
\end{equation*}
Since both $X_\eps^t$ and $Y_\eps^t$ are unitary on $L^2$, so is
$Z_\eps^t$:
\begin{equation}
  \label{eq:unitL2}
  \|X_\eps^t\|_{L^2\to L^2} = \|Y_\eps^t\|_{L^2\to
    L^2}=\|Z_\eps^t\|_{L^2\to L^2}=1. 
\end{equation}
The action of $Z_\eps^t$ on Sobolev spaces is more involved, because
of the nonlinear operator $Y_\eps^t$ (in the case $\alpha=0$).
In the case $\eps=1$ with $f(y)=y$ (cubic nonlinearity), the
convergence of the approximate solution 
generated by the splitting operator as the time step goes to zero has
been established in \cite{BBD} for $x\in \R^d$, $d\le 2$, and in \cite{Lu08} for
$x\in \R^3$.
\begin{theorem}[From \cite{BBD,Lu08}]
  Let $\eps=1$, $f(y)=y$, and $d\le 2$. For all $u_0=u^\eps_{\mid t=0}\in
  H^2(\R^d)$ and all $T>0$, there exist $C$ and $h_0$ such that for
  all $\Delta t\in (0,h_0]$, for all $n\in \N$ such that $n\Delta t\in [0,T]$, 
  \begin{equation*}
    \left\| \(Z^{\Delta t}_1\)^n u_0 -S^{n\Delta
        t}u_0\right\|_{L^2}\le C\(m_2,T\) \Delta t, 
  \end{equation*}
where, for $j\in \N$,
\begin{equation*}
  m_j= \max_{0\le t\le T} \|u(t)\|_{H^j(\R^d)}.
\end{equation*}
If $d=3$ and  $u_0\in H^4(\R^d)$, then 
 \begin{equation*}
    \left\| \(Z^{\Delta t}_{1,S}\)^n u_0 -S^{n\Delta
        t}u_0\right\|_{L^2}\le C\(m_4,T\) (\Delta t)^2. 
  \end{equation*}
\end{theorem}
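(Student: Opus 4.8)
The statement to prove is the convergence of the Lie splitting $(Z_1^{\Delta t})^n$ (and the Strang variant) to the exact flow $S^{t}$ for the cubic NLS at $\eps=1$. The natural approach is the classical Lady Windermere's fan argument for one-step methods: write the global error as a telescoping sum of propagated local errors. Concretely, set $e_n = (Z_1^{\Delta t})^n u_0 - S^{n\Delta t} u_0$, insert intermediate terms $S^{(n-k)\Delta t}(Z_1^{\Delta t})^k u_0$, and use the triangle inequality together with stability of the exact flow $S^{t}$ on $L^2$ (it is $L^2$-isometric, being a Hamiltonian flow with gauge-invariant nonlinearity) to get $\|e_n\|_{L^2} \le \sum_{k=0}^{n-1} \| S^{\Delta t} v_k - Z_1^{\Delta t} v_k \|_{L^2}$ where $v_k = (Z_1^{\Delta t})^k u_0$. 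So everything reduces to (i) a local error estimate and (ii) an a priori bound on the numerical iterates $v_k$ in a high-enough Sobolev norm.

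\textbf{Local error.} For step (i), I would expand both $S^{\Delta t}$ and $Z_1^{\Delta t} = Y_1^{\Delta t} X_1^{\Delta t}$ using the Duhamel (variation-of-constants) formula against the free Schr\"odinger group. Writing $U(t) = X_1^t = e^{i\frac t2 \Delta}$, the exact solution satisfies $S^t v = U(t) v - i \int_0^t U(t-s) \big( |S^s v|^2 S^s v \big)\, ds$, while the split step $Z_1^{\Delta t} v = U(\Delta t)v - i \int_0^{\Delta t} |U(\Delta t)v|^2 U(\Delta t) v \, ds$ — note the nonlinear factor is evaluated at the frozen argument $U(\Delta t)v$ rather than along the flow. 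Subtracting and Taylor-expanding the integrand, the $\O(\Delta t)$ terms cancel and the leading discrepancy is $\O((\Delta t)^2)$ in $L^2$, provided one controls $\|v\|_{H^2}$: the cubic nonlinearity $|z|^2 z$ maps $H^2(\R^d) \to H^2(\R^d)$ for $d \le 3$ (it is an algebra in $d\le 3$), and one needs two Sobolev derivatives to absorb the commutator of $\Delta$ with multiplication by $|v|^2$ that appears when differentiating the integrand in $s$. For the Strang case one gains an order, but now third-order Taylor remainders involve $\Delta(|z|^2 z)$ applied to $\Delta$-derivatives, which is why $H^4$ is required in $d=3$.

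\textbf{A priori bound on the iterates.} For step (ii) — and this is the main obstacle — one must show $\sup_{k \le n} \|v_k\|_{H^2} \le C(m_2, T)$ (resp. $\|v_k\|_{H^4} \le C(m_4,T)$), uniformly for $n\Delta t \le T$, so that the local error constant in the telescoping sum does not blow up. This is not automatic: $X_1^t$ is unitary on every $H^s$, but $Y_1^t w = w\, e^{-i|w|^2 \Delta t}$ is \emph{not} bounded on $H^s$ uniformly — differentiating produces a factor $\Delta t\, \nabla(|w|^2)$, so $\|Y_1^{\Delta t} w\|_{H^s} \le (1 + C\Delta t\,\|w\|_{H^s}^2)\|w\|_{H^s}$ roughly, a quadratic (superlinear) growth. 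The remedy is a bootstrap/continuity argument: assume inductively $\|v_k\|_{H^2} \le 2m_2$ up to some step, use the local error estimate to compare $v_k$ with $S^{k\Delta t}u_0$ in $H^2$ (this requires an $H^2$-version of the local error bound, hence genuinely $d\le 2$ for the $L^2$ result via the $H^2$-algebra property, and the heavier $H^4$ machinery in $d=3$), conclude $\|v_k\|_{H^2} \le m_2 + (\text{error}) \le 2m_2$ once $\Delta t \le h_0$ small, and close the induction. One must be careful that the comparison in $H^2$ itself only loses finitely many derivatives, which is exactly why the higher-regularity spaces in the hypotheses are sharp; the dimension restriction $d\le 2$ (resp. $d=3$ with $H^4$) is precisely what makes the relevant Sobolev space an algebra on which the cubic term is locally Lipschitz.

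*I would expect the a priori $H^s$-bound on the numerical solution to be the crux — it is where the nonlinearity's failure to be uniformly bounded on Sobolev spaces bites, and where the dimension/regularity trade-off enters; once that bootstrap is in place, the telescoping and local-error estimates are standard.*
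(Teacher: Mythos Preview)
The paper does not prove this theorem: it is quoted verbatim from \cite{BBD,Lu08} as background, with no proof supplied. There is therefore no ``paper's own proof'' to compare against; the paper only comments (in the paragraph following the statement and around \eqref{eq:lady1}) that a Lady Windermere's fan argument is used, that the nonlinear nature of $Z^t$ forces one to use nonlinear stability estimates rather than the na\"ive telescoping \eqref{eq:lady1}, and that \cite{Lu08} relies on estimates of Schr\"odinger--Poisson type. Your outline is consistent with that description and with the strategy the paper itself later adopts (cf.\ \eqref{eq:lady2} in Section~\ref{sec:numer}).

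One point in your write-up is genuinely wrong and should be fixed. You invoke ``stability of the exact flow $S^t$ on $L^2$ (it is $L^2$-isometric)'' to pass from the telescoping sum to $\sum_k \|S^{\Delta t}v_k - Z^{\Delta t}v_k\|_{L^2}$. But $L^2$-isometry means $\|S^t u\|_{L^2}=\|u\|_{L^2}$; it does \emph{not} give $\|S^t u - S^t v\|_{L^2}\le \|u-v\|_{L^2}$, which is what you need since $S^t$ is nonlinear. What is actually required is a Lipschitz estimate of the form $\|S^\tau u - S^\tau v\|_{L^2}\le K\|u-v\|_{L^2}$ with $K$ depending on a common $H^s$ bound on $u,v$ (with $s>d/2$), obtained by energy estimates on the difference equation. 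This is precisely why your step~(ii) --- the uniform $H^2$ (resp.\ $H^4$) bound on the iterates $v_k$ --- is indispensable not only for the local error constant but also for the stability constant in the telescoping. With that correction, the structure of your argument (Lady Windermere's fan through the exact flow, local error by Taylor/Duhamel, bootstrap for the $H^s$ bound on the numerical iterates) matches what \cite{BBD,Lu08} do.
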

 Note
however that these results do not directly yield interesting
information 
in the case of \eqref{eq:nls} in the semi-classical limit: in the presence
of rapid oscillations as in \eqref{eq:nls}, the quantity $m_j$ behaves
like $\eps^{-j}$, so the bounds in  \cite{BBD,Lu08} cease to be
interesting. 
\smallbreak

On a more technical level, note that even though $Z_\eps^t$ is
unitary on $L^2$, a standard Lady
Windermere's fan argument, which 
consists in writing 
\begin{equation}\label{eq:lady1}
   u_n - u(t_n) = \sum_{j=0}^{n-1} \(\(Z_\eps^\Dt\)^{n-j-1}Z_\eps^\Dt 
S_\eps^{j\Dt}u_0  - \(Z_\eps^\Dt\)^{n-j-1}S_\eps^\Dt S_\eps^{j\Dt}u_0\), 
\end{equation}
cannot be used directly, since $Z_\eps^t$ is not a linear
operator. Therefore, nonlinear estimates are needed.
Eventually, a Lady
Windermere's fan argument different from \eqref{eq:lady1} is used.
In the case of the Schr\"odinger-Poisson system, the proof in
\cite{Lu08} uses for instance the estimate
\begin{equation*}
  \|\Delta^{-1}(uv)w\|_{L^2(\R^3)}\le C
  \|u\|_{H^1(\R^3)}\|v\|_{L^2(\R^3)}\|w\|_{L^2(\R^3)}. 
\end{equation*}
In the present framework, functions are $\eps$-oscillatory (see
Remark~\ref{rem:osc} below), so the
natural adaptation of the above estimates is of the form
\begin{equation*}
  \|\Delta^{-1}(u^\eps v^\eps) w^\eps\|_{L^2(\R^3)}\le C
  \eps^{-1/2}\|u^\eps\|_{H^1_\eps(\R^3)}\|v^\eps\|_{L^2(\R^3)}
\|w^\eps\|_{L^2(\R^3)}, 
\end{equation*}
where $C$ is independent of $\eps$ and
\begin{equation*}
  \|u^\eps\|_{H^1_\eps(\R^3)} = \sup_{0<\eps\le 
      1}\(\|u^\eps\|_{L^2}+\|\eps\nabla u^\eps \|_{L^2}\)
\end{equation*}
is expected to be bounded uniformly in $\eps$, unlike the standard
$H^1$-norm. 
We  then face an $\eps^{-1/2}$ singular factor in the above
estimate, which ruins the approach of \cite{Lu08} in the
semi-classical limit. Such phenomena explain why there is a gap
between the proof in the semi-classical regime for the \emph{linear}
Schr\"odinger equation \cite{DeTh10} and adapting the arguments of
\cite{Lu08} to the semi-classical regime, even with the local error
estimate of \cite{DeTh13}. 
\subsection{WKB analysis}
\label{sec:wkb}

Given \eqref{eq:nls} with initial datum \eqref{eq:ci}, WKB method
consists in seeking
\begin{equation*}
  u^\eps(t,x) = a^\eps(t,x)e^{i\phi(t,x)/\eps},\quad \text{with
  }a^\eps \approx a+\eps a^{(1)}+\dots
\end{equation*}
Plugging this ansatz into \eqref{eq:nls} and ordering the powers of
$\eps$, we find formally:
\begin{equation*}
  \O(\eps^0): \quad \d_t \phi + \frac{1}{2}|\nabla \phi|^2 = 
\left\{
  \begin{aligned}
   & 0 &\text{ if }\alpha \ge 1,\\
&-f\(|a|^2\)& \text{ if }\alpha=0.
  \end{aligned}
\right.
\end{equation*}
\begin{equation*}
  \O(\eps^1): \quad \d_t a + \nabla \phi\cdot \nabla a
  +\frac{1}{2}a\Delta \phi = 
\left\{
  \begin{aligned}
  &  0 &\text{ if }\alpha >1,\\
&-if\(|a|^2\)a  &\text{ if }\alpha =1,\\
&-2i f'\(|a|^2\)a\RE\(\overline a a^{(1)}\) & \text{ if }\alpha=0.
  \end{aligned}
\right.
\end{equation*}
We see that if $\alpha>1$, then the nonlinearity does not affect the
pair $(a,\phi)$, which describes the behavior of $u^\eps$ at leading
order. On the other hand, if $\alpha=1$, the transport equation for
$a$ is nonlinear, while the equation for $\phi$ is the same as in the
linear case: one speaks of \emph{weakly nonlinear} regime. Finally, in
the case $\alpha=0$, 
the system of equations shows a strong coupling
between all the terms, and is actually not even closed. 
\smallbreak
In the rest of this subsection, we focus our attention on the case
$\alpha=0$. 
An important remark consists in noticing that the
transport equation
\begin{equation}\label{eq:transportsurcrit}
  \d_t a + \nabla \phi\cdot \nabla a
  +\frac{1}{2}a\Delta \phi = -2i f'\(|a|^2\)a\RE\(\overline a
  a^{(1)}\), 
\end{equation}
enjoys the following
property: even though it cannot be solved when $a^{(1)}$ is unknown,
it is of the form $D_t a = ia\times \R$, where $D_t$ stands 
for the vector field $\d_t +\nabla \phi\cdot \nabla +\frac{1}{2}\Delta
\phi$. Therefore, $D_t|a|^2=0$, and if we set $(v,\rho)=(\nabla
\phi,|a|^2)$, then the system in $(\phi,a)$ becomes the closed system
\begin{equation}
  \label{eq:euler}
    \left\{
    \begin{aligned}
      &\d_t \rho +\DIV (\rho v) =0 ;\quad \rho_{\mid t=0} = |a_0|^2,\\
&\d_t v+v\cdot \nabla v+ \nabla f(\rho)=0;\quad
v_{\mid t=0} = \nabla \phi_0.
    \end{aligned}
\right.
\end{equation}
Note also that if we set $\tilde J = \rho v$, then $(\rho,\tilde J)$
solves \eqref{eq:euler1}: we have written \eqref{eq:euler1} in a
different form, which is also encountered in fluids mechanics.
As a matter of fact, in the case of a nonlocal nonlinearity
$f(\rho)=K\ast \rho$, \eqref{eq:transportsurcrit} is not correct, but
since this term has disappeared in \eqref{eq:euler}, we do not write
the correct version of \eqref{eq:transportsurcrit}, which is a bit
involved to present. In the case of a nonlocal nonlinearity, we will
make the following assumption. 
\begin{hyp}\label{hyp:r3}
  The nonlinearity $f$ is of the form $f(\rho)=K\ast \rho$, where the
  kernel $K$ is such that its Fourier transform, defined by
  \begin{equation*}
    \widehat K(\xi) = \frac{1}{(2\pi)^{d/2}}\int_{\R^d} e^{-ix\cdot
      \xi}K(x)dx, 
  \end{equation*}
satisfies:
\begin{itemize}
\item If $d\le 2$, 
  \begin{equation*}
    \sup_{\xi\in \R^d}(1+|\xi|^2)\lvert \widehat K(\xi)\rvert <\infty.
  \end{equation*}
\item If $d\ge 3$, 
\begin{equation*}
  \sup_{\xi\in \R^d}|\xi|^2\lvert \widehat K(\xi)\rvert <\infty.
\end{equation*}
\end{itemize}
\end{hyp}
Typically, this includes the case of Schr\"odinger-Poisson system if
$d\ge 3$,
where $f(\rho)$ is given by the Poisson equation
\begin{equation*}
  \Delta f = \lambda \rho,\quad f,\nabla f \to 0 \text{ as }|x|\to \infty,
\end{equation*}
with $\lambda \in \R$. This equation can be solved by Fourier
analysis if $d\ge 3$ ($\widehat K(\xi) = -\lambda |\xi|^{-2}$); if
$d\le 2$, this is no longer the case, as discussed in \cite{Mas10}. 
Under this assumption, \eqref{eq:euler} has a unique solution
$(v,\rho)\in C([0,T];H^{s+1} \times (H^s\cap L^1))$ provided that the initial
data are sufficiently smooth, with $s>d/2+1$, from \cite{Ga93} (see also
\cite{AC-SP,LiLinEJDE,ZhangSIMA}, and Section~\ref{sec:cadre} for the
main steps of the proof).

\begin{proposition}
Suppose that $f$ satisfies Assumption~\ref{hyp:r3}.   Let
$a_0,\phi_0\in \Sch'(\R^d)$ with $(\nabla \phi_0,a_0)\in 
 H^{s+1}\times H^s$ for some $s>d/2$. There exists a unique
 maximal solution  
   $(v,\rho)\in C\([0,T_{\rm max});H^{s+1}\times (H^s\cap L^1)\)$
   to \eqref{eq:euler}.
    In addition,  $T_{\rm max}$ is independent of $s>d/2+1$
  and 
    \begin{equation*}
   T_{\rm max}<+\infty\Longrightarrow  \int_0^{T_{\rm
       max}}\(\|v(t)\|_{W^{1,\infty}}+\|a(t)\|_{W^{1,\infty}}\)dt=+\infty. 
  \end{equation*}
\end{proposition}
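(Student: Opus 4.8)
The plan is to prove the statement in its natural form, as the local well-posedness of the Grenier-type system for the pair $(v,a)$, with $v=\nabla\phi$ real and $a$ complex,
\begin{equation*}
  \d_t v + v\cdot\nabla v + \nabla f\(|a|^2\)=0,\qquad
  \d_t a + v\cdot\nabla a + \tfrac12\(\DIV v\)a = 0,
\end{equation*}
with data $(\nabla\phi_0,a_0)$; one then sets $\rho=|a|^2$, which solves \eqref{eq:euler} (and, conversely, from a solution $(v,\rho)$ of \eqref{eq:euler} with $v$ a gradient, $a$ is recovered through the second, linear, transport equation). First I would record the structure that makes this work. The principal part is the diagonal transport operator $\d_t+v\cdot\nabla$ on $(v,a)$; the feedback of $v$ into the $a$-equation, $(\DIV v)a$, costs one derivative of $v$ but lives at the level of $a$; and, by Assumption~\ref{hyp:r3}, the operator $g\mapsto\nabla K\ast g$ is \emph{smoothing}, its multiplier $i\xi\widehat K(\xi)$ being $\O(|\xi|^{-1})$ at high frequency, so that $\|\nabla f(|a|^2)\|_{H^{s+1}}\lesssim\|a\|_{L^\infty}\|a\|_{H^s}+\|a\|_{L^2}^2$, the $L^2$ factor entering only at low frequencies (in dimension $d\ge3$), where it is also what gives a meaning to the potential $f(|a|^2)$ itself. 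This is precisely why $v$ sits one Sobolev level above $a$. Since $f$ is \emph{linear} in $\rho=|a|^2$, there is no vacuum degeneracy. Finally, both $\|a(t)\|_{L^2}$ (i.e.\ $\|\rho(t)\|_{L^1}$) and the irrotationality of $v$ are conserved by the flow, and $a_0\in H^s\hookrightarrow L^2$, so the $L^1$ requirement on $\rho$ is automatic.

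The core is the a priori estimate. Set $\E(t)=\|v(t)\|_{H^{s+1}}^2+\|a(t)\|_{H^s}^2$. Applying $\d^\beta$ with $|\beta|\le s+1$ to the $v$-equation and with $|\beta|\le s$ to the $a$-equation, taking $L^2$ inner products with $\d^\beta v$ and $\d^\beta a$, and integrating by parts in the transport terms (which only produces $\DIV v$ factors), one is left with the commutators $[\d^\beta,v\cdot\nabla]v$ and $[\d^\beta,v\cdot\nabla]a$, the product $\d^\beta\((\DIV v)a\)$, and $\d^\beta\nabla f(|a|^2)$; the first three are bounded by the Kato--Ponce commutator and Moser product inequalities, the last by Assumption~\ref{hyp:r3}, and altogether
\begin{equation*}
  \frac{d}{dt}\E(t)\le C\(1+\|v(t)\|_{W^{1,\infty}}+\|a(t)\|_{W^{1,\infty}}\)\(1+\E(t)\),
\end{equation*}
with $C=C(\|a_0\|_{L^2})$. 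By Gr\"onwall this yields both a local existence time depending only on $\E(0)$ and $\|a_0\|_{L^2}$, and the continuation criterion: $\E$ stays bounded as long as $\int_0^t\(\|v\|_{W^{1,\infty}}+\|a\|_{W^{1,\infty}}\)$ does.

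Existence then follows from the standard iteration scheme: given $(v_n,a_n)$, let $(v_{n+1},a_{n+1})$ solve the \emph{linear} transport equations $\d_t v_{n+1}+v_n\cdot\nabla v_{n+1}=-\nabla f(|a_n|^2)$ and $\d_t a_{n+1}+v_n\cdot\nabla a_{n+1}+\tfrac12(\DIV v_n)a_{n+1}=0$; the a priori estimate gives bounds uniform in $n$ in $C([0,T];H^{s+1}\times H^s)$ on a time interval independent of $n$, the successive differences contract in $C([0,T];L^2\times L^2)$, interpolation upgrades convergence to $C([0,T];H^{s'+1}\times H^{s'})$ for every $s'<s$, and the limit solves the system; continuity in time at the top regularity is restored by a Bona--Smith mollification argument. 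Uniqueness and stability come from an $L^2\times L^2$ estimate on the difference of two solutions. Defining $T_{\rm max}$ as the supremum of existence times gives the maximal solution, and the continuation criterion gives the blow-up alternative --- stated with $a$, not merely $\rho$, because $a$ is itself an unknown. That $T_{\rm max}$ is independent of $s>d/2+1$ follows because a higher-regularity solution is, by uniqueness, also a lower-regularity one, while if it existed strictly longer at the lower level then, using $H^{s+1}\times H^s\hookrightarrow W^{1,\infty}\times W^{1,\infty}$ for $s>d/2+1$, the quantity $\|v\|_{W^{1,\infty}}+\|a\|_{W^{1,\infty}}$ would remain bounded up to the lower maximal time, contradicting the criterion. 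Passing between the $(v,a)$ and $(v,\rho)$ descriptions is harmless since $\rho=|a|^2$ and $\|\rho\|_{H^s}\lesssim\|a\|_{L^\infty}\|a\|_{H^s}$ for $s>d/2$.

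The step I expect to require genuine care is the bookkeeping of the mismatched regularity levels --- arranging that $(\DIV v)a$ and $\nabla f(|a|^2)$ are each estimated at exactly the right level so that the energy inequality closes with $\E=\|v\|_{H^{s+1}}^2+\|a\|_{H^s}^2$ and no stronger norm --- together with checking that Assumption~\ref{hyp:r3} delivers precisely the high-frequency gain required for $\nabla f(|a|^2)$ while the conserved $L^2$ norm absorbs the low-frequency part, the only point where the dichotomy $d\le2$ versus $d\ge3$ genuinely intervenes. Everything else is the standard symmetric-hyperbolic toolbox.
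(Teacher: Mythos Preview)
Your proposal is correct and follows essentially the same approach as the paper: the paper does not give a self-contained proof of this proposition but defers to the literature and to Section~\ref{sec:cadre}, where the key smoothing estimate $\|\nabla f(\rho)\|_{H^{s+1}}\lesssim\|\rho\|_{H^s}+\|\rho\|_{L^1}$ is proved (Lemma~\ref{lem:gain}) and the $(v,a)$ system is treated as a symmetric hyperbolic system with semilinear coupling terms (sketch of Proposition~\ref{prop:S1}, with $\eps=0$). Your writeup fleshes out exactly this strategy --- the iteration scheme, the Kato--Ponce commutator estimates, the continuation criterion, and the Bona--Smith argument for top-level continuity --- in more detail than the paper provides, but the architecture is the same.
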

\begin{remark}\label{rem:osc}
We note from \eqref{eq:euler} that even if no rapid oscillation is
present initially in \eqref{eq:ci}, then  $v_{\mid t=0}=0$ and $\d_t
v_{\mid t=0}\not =0$, so the solution $u^\eps$ is not
$\eps$-oscillatory at time $t=0$, but becomes \emph{instantaneously}
$\eps$-oscillatory. 
\end{remark}

We emphasize the fact that under Assumption~\ref{hyp:r3}, and for fixed
$\eps>0$, given
$u_0^\eps\in L^2(\R^d)$, \eqref{eq:nls} has a unique, global solution
$u^\eps\in C([0,\infty);L^2)$. Moreover, higher Sobolev regularity is
propagated globally in time
(the nonlinearity is $L^2$-subcritical); see e.g. \cite{CazCourant}.

\subsection{Main results}
\label{sec:main}

Our main result measures the accuracy of the time splitting operator
so long as the solution to \eqref{eq:euler} remains smooth.

\begin{theorem}\label{theo:main}
 Suppose that $d\ge 1$, $\alpha=0$ in \eqref{eq:nls}, and 
  that $f$ satisfies Assumption~\ref{hyp:r3}. Let
  $(\phi_0,a_0)\in L^\infty(\R^d)\times H^s(\R^d)$ with $s>d/2+2$,
  and such that $\nabla \phi_0 \in H^{s+1}(\R^d)$. 
Let $T>0$ be such that the solution to 
  \eqref{eq:euler} satisfies $(v,\rho)\in C([0,T];H^{s+1}\times
  H^s)$. Consider $u^\eps =S^t_\eps u_0^\eps$ solution to
  \eqref{eq:nls} and $u_0^\eps$ given by
  \eqref{eq:ci}. 
There exist $\eps_0>0$ and $C,c_0$ independent of $\eps\in (0,\eps_0]$
such that for 
  all $\Dt\in (0,c_0]$, for all $n\in \N$ such that $t_n=n\Dt\in
  [0,T]$, the following holds:\\ 
$1.$ There exist $\phi^\eps$ and $a^\eps$ with
\begin{equation*}
  \sup_{t\in [0,T]}\(\|a^\eps(t)\|_{H^s(\R^d)}+\|\nabla
  \phi^\eps(t)\|_{H^{s+1}(\R^d)}+\|\phi^\eps(t)\|_{L^\infty(\R^d)} \)\le 
  C,\quad \forall 
  \eps\in (0,\eps_0],
\end{equation*}
such that $u^\eps(t,x)=a^\eps(t,x)e^{i\phi^\eps(t,x)/\eps}$ for all
$(t,x)\in [0,T]\times \R^d$.\\
$2.$ There exist $\phi_n^\eps$ and $a_n^\eps$ with
\begin{equation*}
  \|a_n^\eps\|_{H^s(\R^d)}+\|\nabla\phi^\eps_n\|_{H^{s+1}(\R^d)}+
\|\phi^\eps_n\|_{L^\infty(\R^d)}\le C,\quad \forall
  \eps\in (0,\eps_0],
\end{equation*}
such that $(Z_\eps^\Dt)^n \(a_0 e^{i\phi_0/\eps}\) = a_n^\eps
e^{i\phi_n/\eps}$, and the following  error estimate holds:
  \begin{equation*}
 \left\| a_n^\eps - a^\eps(t_n)\right\|_{H^{s-1}}+ \left\| \nabla\phi_n^\eps -
   \nabla\phi^\eps(t_n)\right\|_{H^{s}}+ \left\| \phi_n^\eps -
   \phi^\eps(t_n)\right\|_{L^\infty}\le C \Dt.  
 \end{equation*}
\end{theorem}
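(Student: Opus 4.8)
The plan is to work entirely in the phase/amplitude representation rather than with the wave function itself, so that all quantities stay bounded uniformly in $\eps$. For the exact flow this is precisely what the WKB theory gives: part~$1$ of the theorem follows from the solvability of \eqref{eq:euler} on $[0,T]$ together with the standard construction of the full WKB expansion (or, more robustly, of a modified amplitude equation as in \cite{Grenier98}); the key point is that once $(v,\rho)=(\nabla\phi,|a|^2)$ is smooth on $[0,T]$, the amplitude $a^\eps$ solves a transport-type equation whose $H^s$-norm and whose phase $\phi^\eps$ (with $\nabla\phi^\eps\in H^{s+1}$, $\phi^\eps\in L^\infty$) are controlled uniformly in $\eps$. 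So the real work is part~$2$.

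First I would analyze one step of the splitting operator $Z_\eps^\Dt=Y_\eps^\Dt X_\eps^\Dt$ acting on a WKB state $a\,e^{i\phi/\eps}$, and show it again produces a WKB state $a^+ e^{i\phi^+/\eps}$ with $(a^+,\nabla\phi^+)$ close, at the level of a \emph{local (consistency) error} of size $O(\Dt^2)$ measured in $H^{s-1}\times H^s$, to the exact flow over time $\Dt$ \emph{provided the data are bounded in $H^s\times H^{s+1}$ and $\eps\le\eps_0$}. The free flow $X_\eps^\Dt$ acts on $a e^{i\phi/\eps}$ essentially by the classical free evolution of the phase (solving $\d_t\phi+\frac12|\nabla\phi|^2=0$ over time $\Dt$, which is fine for $\Dt$ small even though globally it develops caustics) plus a transport of the amplitude, with an $O(\Dt^2)$ correction coming from the $\frac{\eps^2}{2}\Delta a$ term — here one uses that $\eps^2\Delta a$ loses two derivatives but carries an $\eps^2$, so after dividing by $\eps$ one still has an $O(\eps)$, hence harmless, contribution; this is exactly the mechanism that makes the semiclassical scaling work and is the reason the WKB representation is indispensable. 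The nonlinear flow $Y_\eps^\Dt$ acts purely on the phase, $\phi\mapsto\phi-\Dt\, f(|a|^2)$, leaving $|a^\eps|$ unchanged, so it is exact at the level of the amplitude and contributes the expected splitting error $O(\Dt^2)$ in the phase from the noncommutation of the two vector fields. Assumption~\ref{hyp:r3} is used precisely to control $f(|a|^2)=K\ast|a|^2$ and its derivatives in $H^{s+1}$ (and $L^\infty$) by the $H^s$-norm of $a$, with constants independent of $\eps$.

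Next I would set up the \emph{stability} estimate: iterate $n$ steps and compare $(a_n^\eps,\nabla\phi_n^\eps)$ with $(a^\eps(t_n),\nabla\phi^\eps(t_n))$. Because $Z_\eps^\Dt$ is nonlinear, the naive telescoping \eqref{eq:lady1} is unavailable; instead I would run a Lady Windermere's fan in the \emph{phase/amplitude variables}, telescoping along the exact WKB flow and the discrete flow, and at each node estimate the difference of the exact flow and one splitting step applied to the \emph{same} intermediate state (the local error above), then propagate this via a Lipschitz/stability bound for the exact WKB flow in $H^{s-1}\times H^s$. The stability of the exact flow on $[0,T]$ in the slightly lower norm $H^{s-1}\times H^s$ is standard energy-estimate material once the higher norm $H^s\times H^{s+1}$ is a priori bounded (as in part~$1$), giving a Gronwall factor $e^{CT}$; summing $n=O(\Dt^{-1})$ local errors of size $O(\Dt^2)$ yields the $O(\Dt)$ global bound. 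A subtlety I would be careful about is that the discrete phase $\phi_n^\eps$ must itself be shown to satisfy the a priori bound (second displayed inequality of part~$2$) \emph{uniformly along the iteration}, which is a bootstrap: one shows that as long as the discrete solution stays in a ball of $H^s\times H^{s+1}$ of radius $2C$, each step keeps it there for $\Dt\le c_0$ small, and the error estimate then confines it to the ball a posteriori.

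The \textbf{main obstacle} is the one the introduction flags explicitly: the loss of derivatives combined with the rapid oscillations. Concretely, controlling the free-flow step $X_\eps^\Dt$ on a WKB state requires solving the eikonal equation over the short time $\Dt$ and transporting the amplitude \emph{without} any $\eps^{-1}$ blow-up, and then showing that the residual (the part of $X_\eps^\Dt(ae^{i\phi/\eps})$ not captured by this classical picture) is $O(\Dt^2)$ in $H^{s-1}$ \emph{uniformly in $\eps$}; the danger is that differentiating the oscillatory factor $e^{i\phi/\eps}$ produces $\eps^{-1}\nabla\phi$, so one must arrange every such factor to be multiplied by a compensating $\eps$ (coming from the $\eps^2\Delta$ term, or from the $\eps$ in front of $\d_t$). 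This is why the threshold $s>d/2+2$ (rather than $s>d/2+1$) appears — one needs the extra derivative both for the product estimates and to absorb the $H^{s-1}\to H^s$ mismatch in the local error — and why the cubic nonlinearity in the regime $\alpha=0$ is excluded (Remark~\ref{rem:loss}): there $f(|a|^2)=|a|^2$ does not gain the regularity that $K\ast|a|^2$ provides under Assumption~\ref{hyp:r3}, so the phase error cannot be closed. I expect the bulk of the work to be the careful bookkeeping of these $\eps$-powers in the one-step estimate, the rest being a by-now-classical fan argument adapted to the nonlinear setting.
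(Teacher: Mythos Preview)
Your overall architecture matches the paper's: work entirely in the phase/amplitude variables, establish a local (one-step) error of order $\Dt^2$, run a Lady Windermere's fan using stability of the \emph{exact} WKB flow (Proposition~\ref{prop:S2}), and close by a bootstrap/induction on the discrete bounds (this is Proposition~\ref{prop:bound}). Part~1 is indeed just Proposition~\ref{prop:S1}.

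There is, however, a genuine soft spot in your treatment of the free step $X_\eps^\Dt$. You describe it as ``eikonal evolution of $\phi$ plus transport of $a$, with an $O(\Dt^2)$ residual coming from $\tfrac{\eps^2}{2}\Delta a$'', and later speak of ``the residual (the part of $X_\eps^\Dt(ae^{i\phi/\eps})$ not captured by this classical picture)''. The paper does \emph{not} treat $i\tfrac{\eps}{2}\Delta a$ as a small remainder: it keeps it exactly in the amplitude equation, so that the pair $(\phi^\eps,a^\eps)$ solving \eqref{eq:Xbkw} satisfies $X_\eps^t(a_0 e^{i\phi_0/\eps})=a^\eps(t)e^{i\phi^\eps(t)/\eps}$ \emph{identically}, with no residual whatsoever. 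The point is that $i\Delta$ is skew-symmetric, so this term simply vanishes from the $H^s$ energy estimates (Proposition~\ref{prop:estimXsplit}); no $\eps$-power bookkeeping against oscillatory factors is ever needed, because once you pass to $(\phi^\eps,a^\eps)$ there are no oscillations left. If instead you genuinely treat $i\tfrac{\eps}{2}\Delta a$ perturbatively, you lose two derivatives per step with only an $\eps$ (not $\Dt$) prefactor, and over $n\sim T/\Dt$ steps this does not close. So the fix is simple but essential: replace your ``WKB approximation of $X_\eps^\Dt$ plus residual'' by the exact Grenier-type system \eqref{eq:Xbkw}, and use skew-symmetry.

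A secondary difference: for the local error the paper does not Taylor-expand directly but applies the exact Descombes--Thalhammer representation (Theorem~\ref{theo:error}) to the operators $A,B$ acting on $(\phi,a)$ as in \eqref{eq:AB}, computes the commutator (Lemma~\ref{lem:crochet}), and reads off the $O(t^2)$ bound (Theorem~\ref{theo:errlocBKW}). Your direct approach can be made to work once the exact representation of $X_\eps^\Dt$ is in place, but the commutator route makes transparent exactly where the single derivative loss (hence $H^{s-1}$ for the amplitude, $H^s$ for $\nabla\phi$) comes from, and why Assumption~\ref{hyp:r3} via Lemma~\ref{lem:gain} prevents any further loss from the nonlinear step---which is the content of Remark~\ref{rem:loss} that you correctly anticipated.
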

Note that in the above result, the phase/amplitude representation of
the exact solution $u^\eps$ and the numerical solution is not unique. 
This result shows in particular that the splitting solution remains
bounded in $L^\infty$, uniformly in $\eps$, in the WKB regime. 
We infer the convergence of 
the wave functions in $L^2$, by reconstructing the numerical wave function:
\begin{corollary}\label{cor:wave}
  Under the assumptions of Theorem~\ref{theo:main}, there exist
  $\eps_0>0$ and 
$C,c_0$ independent of $\eps\in (0,\eps_0]$ such that for
  all $\Dt\in (0,c_0]$, for all $n\in \N$ such that $n\Dt\in [0,T]$, 
 \begin{equation*}
 \left\| (Z_\eps^\Dt)^n u_0^\eps
   -S^{t_n}_\eps
      u_0^\eps\right\|_{L^2(\R^d)}\le C \frac{\Dt}{\eps}. 
 \end{equation*}
\end{corollary}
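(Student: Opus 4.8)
The plan is to deduce Corollary~\ref{cor:wave} directly from Theorem~\ref{theo:main} by writing both the exact and the numerical wave functions in their respective WKB (phase/amplitude) forms and estimating the difference of two oscillatory functions in $L^2$. Concretely, with $u^\eps(t_n,x) = a^\eps(t_n,x)e^{i\phi^\eps(t_n,x)/\eps}$ and $(Z_\eps^\Dt)^n u_0^\eps = a_n^\eps e^{i\phi_n^\eps/\eps}$ as furnished by Theorem~\ref{theo:main}, I would write
\begin{equation*}
  a_n^\eps e^{i\phi_n^\eps/\eps} - a^\eps(t_n)e^{i\phi^\eps(t_n)/\eps}
  = \(a_n^\eps - a^\eps(t_n)\)e^{i\phi_n^\eps/\eps}
  + a^\eps(t_n)\(e^{i\phi_n^\eps/\eps} - e^{i\phi^\eps(t_n)/\eps}\).
\end{equation*}
The first term is handled by unimodularity of the exponential: its $L^2$ norm equals $\|a_n^\eps - a^\eps(t_n)\|_{L^2} \le \|a_n^\eps - a^\eps(t_n)\|_{H^{s-1}} \le C\Dt$ by Theorem~\ref{theo:main}, which is even better than the claimed bound. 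For the second term I would factor out $e^{i\phi^\eps(t_n)/\eps}$ and use the elementary inequality $|e^{i\theta_1}-e^{i\theta_2}| \le |\theta_1-\theta_2|$ with $\theta_j = \phi^\eps_{\,\cdot}/\eps$, giving the pointwise bound $|a^\eps(t_n,x)|\,\eps^{-1}|\phi_n^\eps(x)-\phi^\eps(t_n,x)|$.

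To turn this pointwise bound into an $L^2$ estimate I would put the $L^\infty$ loss on the phase difference: the second term is bounded in $L^2$ by $\eps^{-1}\|\phi_n^\eps - \phi^\eps(t_n)\|_{L^\infty}\|a^\eps(t_n)\|_{L^2}$. Here $\|a^\eps(t_n)\|_{L^2} \le \|a^\eps(t_n)\|_{H^s} \le C$ uniformly in $\eps$ (and in $n$ with $t_n\le T$) by part~1 of Theorem~\ref{theo:main}, while $\|\phi_n^\eps - \phi^\eps(t_n)\|_{L^\infty} \le C\Dt$ is exactly the last term in the error estimate of part~2. Hence the second term is $\le C\Dt/\eps$, and combining the two contributions yields
\begin{equation*}
  \left\| (Z_\eps^\Dt)^n u_0^\eps - S_\eps^{t_n} u_0^\eps\right\|_{L^2}
  \le C\Dt + C\frac{\Dt}{\eps} \le C\frac{\Dt}{\eps},
\end{equation*}
for $\eps\le \eps_0 \le 1$, with $\eps_0, c_0$ the constants from Theorem~\ref{theo:main}. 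This is precisely the assertion.

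There is essentially no obstacle here: the entire content has been front-loaded into Theorem~\ref{theo:main}, and the corollary is a two-line reconstruction argument. The only point deserving a word of care is the non-uniqueness of the phase/amplitude representation flagged after Theorem~\ref{theo:main}: one must make sure that the \emph{same} representations $(a^\eps,\phi^\eps)$ and $(a_n^\eps,\phi_n^\eps)$ produced by that theorem are used throughout, since it is only for that particular matched pair that the phase difference is controlled in $L^\infty$. Given that, the $\eps^{-1}$ factor in the final bound is manifestly unavoidable and of the expected size: it is the price of differencing two phases that oscillate at frequency $\eps^{-1}$, and it is consistent with the known fact that the wave function is only well approximated when $\Dt = o(\eps)$, whereas the quadratic observables $\rho^\eps$ and $J^\eps$ — being built from $|u^\eps|^2$ and thus insensitive to the common phase — converge for $\Dt$ independent of $\eps$.
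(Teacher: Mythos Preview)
Your proof is correct and follows essentially the same approach as the paper: the paper also writes the difference as $\(a_n^\eps- a^\eps(t_n)\)e^{i\phi_n^\eps/\eps} + a^\eps(t_n)\(e^{i\phi_n^\eps/\eps}-  e^{i\phi^\eps(t_n)/\eps}\)$, bounds the first term by $\|a_n^\eps-a^\eps(t_n)\|_{L^2}$ and the second by $\|a^\eps(t_n)\|_{L^2}\,\eps^{-1}\|\phi_n^\eps-\phi^\eps(t_n)\|_{L^\infty}$, and then invokes Theorem~\ref{theo:main}.
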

We also get the convergence of the main
quadratic observables:
\begin{corollary}\label{cor:CVquad}
  Under the assumptions of Theorem~\ref{theo:main}, there exist
  $\eps_0>0$ and 
$C,c_0$ independent of $\eps\in (0,\eps_0]$ such that for
  all $\Delta t\in (0,c_0]$, for all $n\in \N$ such that $n\Delta
  t\in [0,T]$,  
  \begin{align*}
   & \left\| \left\lvert (Z_\eps^\Dt)^n u_0^\eps\right\rvert^2
     -|\rho^\eps(t_n)|^2   \right\|_{L^1(\R^d)\cap L^\infty(\R^d)}\le C \Dt,\\ 
&\left\| \IM\(\eps \overline{(Z_\eps^\Dt)^n u_0^\eps}\nabla
  (Z_\eps^\Dt)^n u_0^\eps\) - J^\eps(t_n) \right\|_{L^1(\R^d)\cap
  L^\infty(\R^d)}\le C \Dt.  
 \end{align*}
\end{corollary}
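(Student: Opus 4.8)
The plan is to deduce the corollary from Theorem~\ref{theo:main} alone, exploiting that the quadratic observables $\rho^\eps$ and $J^\eps$ depend on the wave function only through its amplitude and the \emph{gradient} of its phase, never through the phase itself. First I would write, with $(a^\eps,\phi^\eps)$ and $(a_n^\eps,\phi_n^\eps)$ the functions furnished by Theorem~\ref{theo:main}, $u^\eps(t_n,x)=a^\eps(t_n,x)e^{i\phi^\eps(t_n,x)/\eps}$ and $(Z_\eps^\Dt)^n u_0^\eps=a_n^\eps e^{i\phi_n^\eps/\eps}$, and record
\begin{equation*}
  \left\lvert (Z_\eps^\Dt)^n u_0^\eps\right\rvert^2=|a_n^\eps|^2 ,\qquad \rho^\eps(t_n)=|a^\eps(t_n)|^2 ,
\end{equation*}
together with, using $\nabla u^\eps=\left(\nabla a^\eps+\tfrac{i}{\eps}a^\eps\nabla\phi^\eps\right)e^{i\phi^\eps/\eps}$,
\begin{equation*}
  \eps\,\IM\left(\overline{u^\eps}\,\nabla u^\eps\right)=\eps\,\IM\left(\overline{a^\eps}\,\nabla a^\eps\right)+|a^\eps|^2\,\nabla\phi^\eps ,
\end{equation*}
and the numerical analogue obtained by replacing $(a^\eps,\phi^\eps)$ with $(a_n^\eps,\phi_n^\eps)$. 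This isolates what must be estimated and makes clear that only $\nabla\phi^\eps,\nabla\phi_n^\eps$ enter --- fortunately, since Theorem~\ref{theo:main} controls $\phi_n^\eps-\phi^\eps$ only in $L^\infty$.

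For the position density, I would use $|a_n^\eps|^2-|a^\eps(t_n)|^2=a_n^\eps\,\overline{a_n^\eps-a^\eps(t_n)}+\overline{a^\eps(t_n)}\left(a_n^\eps-a^\eps(t_n)\right)$, so that pointwise $\left\lvert |a_n^\eps|^2-|a^\eps(t_n)|^2\right\rvert\le\left(|a_n^\eps|+|a^\eps(t_n)|\right)\lvert a_n^\eps-a^\eps(t_n)\rvert$. Since $s>d/2+2$ gives $s-1>d/2$, one has $H^{s-1}\hookrightarrow L^2\cap L^\infty$; estimating the $L^\infty$ norm by the product of $L^\infty$ norms and the $L^1$ norm by Cauchy--Schwarz in $L^2$, the uniform bounds of Theorem~\ref{theo:main} and the amplitude error estimate $\|a_n^\eps-a^\eps(t_n)\|_{H^{s-1}}\le C\Dt$ yield $\left\| |a_n^\eps|^2-|a^\eps(t_n)|^2\right\|_{L^1\cap L^\infty}\le C\Dt$. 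The loss of one derivative is harmless because $s-1$ still exceeds $d/2$.

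For the current density, I would split the difference (all terms at time $t_n$) as
\begin{equation*}
  \eps\left(\IM\left(\overline{a_n^\eps}\nabla a_n^\eps\right)-\IM\left(\overline{a^\eps}\nabla a^\eps\right)\right)+\left(|a_n^\eps|^2-|a^\eps|^2\right)\nabla\phi_n^\eps+|a^\eps|^2\left(\nabla\phi_n^\eps-\nabla\phi^\eps\right) .
\end{equation*}
The middle term is handled exactly as the position density, multiplied by $\|\nabla\phi_n^\eps\|_{L^\infty}\lesssim\|\nabla\phi_n^\eps\|_{H^{s+1}}\le C$. For the last term I would bound $\left\||a^\eps|^2(\nabla\phi_n^\eps-\nabla\phi^\eps)\right\|_{L^\infty}\le\|a^\eps\|_{L^\infty}^2\|\nabla\phi_n^\eps-\nabla\phi^\eps\|_{L^\infty}$ and $\left\||a^\eps|^2(\nabla\phi_n^\eps-\nabla\phi^\eps)\right\|_{L^1}\le\|a^\eps\|_{L^2}^2\|\nabla\phi_n^\eps-\nabla\phi^\eps\|_{L^\infty}$, then invoke $\|\nabla\phi_n^\eps-\nabla\phi^\eps(t_n)\|_{H^s}\le C\Dt$ together with $H^s\hookrightarrow L^\infty$. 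The first term carries the prefactor $\eps$; writing $a_n^\eps=a^\eps(t_n)+r_n^\eps$ with $\|r_n^\eps\|_{H^{s-1}}\le C\Dt$ and expanding the bilinear expressions, each resulting piece is $O(\Dt)$ in $L^1\cap L^\infty$ by the uniform $H^s$ bounds (this is where $s>d/2+2$ is really used: it gives $\nabla r_n^\eps\in H^{s-2}\hookrightarrow L^\infty$), so this contribution is $O(\eps\Dt)\le C\Dt$.

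I do not expect a genuine obstacle: the corollary is essentially a translation of Theorem~\ref{theo:main}. The only points needing care are the initial rewriting, which confines the phase to its gradient so that the mere $L^\infty$ control of $\phi_n^\eps-\phi^\eps$ is not needed, and keeping the factor $\eps$ in front of $\IM(\overline{a^\eps}\nabla a^\eps)$: without it that term would only be $O(\eps)$, which does not tend to $0$ with $\Dt$ at fixed $\eps$ and would thus fail to give the $O(\Dt)$ bound. What remains is the routine verification that every product of the functions supplied by Theorem~\ref{theo:main} lands in $L^1\cap L^\infty$ with an $\eps$-independent constant, which follows from the embeddings $H^{s-2},H^{s-1},H^s\hookrightarrow L^2\cap L^\infty$ valid for $s>d/2+2$.
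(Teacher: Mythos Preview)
Your proof is correct and follows exactly the paper's approach: the paper proves this corollary by referring back to the computation in Corollary~\ref{cor:quad-loc}, which is precisely your decomposition (express both observables in terms of $(a^\eps,\nabla\phi^\eps)$, then estimate the differences via H\"older/Cauchy--Schwarz and the Sobolev embeddings available since $s>d/2+2$). One small quibble: your parenthetical about the $\eps$ prefactor on $\IM(\overline{a^\eps}\nabla a^\eps)$ is garbled --- the \emph{difference} of those terms is already $O(\Dt)$ in $L^1\cap L^\infty$ without the $\eps$, and the prefactor merely improves it to $O(\eps\Dt)$ --- but this is a side remark that does not affect your argument.
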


These results may seem limited, inasmuch as they address only a
specific regime, and say nothing on the large time behavior. We
emphasize the fact that the behavior of $u^\eps$ as $\eps\to 0$ at
time where the solution to \eqref{eq:euler} ceases to be smooth is still
an open question. Therefore, the analytical tools to analyze the
splitting operators are missing, due to a lack of precise estimates on
the exact solution. Typically, all the results presented here highly
rely on the fact that a WKB regime is considered. 

\subsection{Weakly nonlinear regime}
\label{sec:wnlgointro}

We now consider the case $\alpha\ge 1$ in \eqref{eq:nls}, which turns out
to be quite easier to treat. To begin with, the assumption on the
nonlinearity is weaker, and we allow local interactions. 
\begin{hyp}\label{hyp:f}
  The nonlinearity $f$ is of the form $f=f_1+f_2$, where $f_1$
  satisfies Assumption~\ref{hyp:r3}, and $f_2\in
  C^\infty([0,\infty);\R_+)$, with $f_2(0)=0$. 
\end{hyp}
\begin{remark}
  The assumption $f_2(0)=0$ is here merely to
simplify the presentation, since replacing $f$ with $f-f_2(0)$ in
\eqref{eq:nls} amounts to replacing $u^\eps$ with $u^\eps
e^{itf_2(0)/\eps}$. 
\end{remark}

\begin{proposition}\label{prop:wnlgo}
  Suppose that $d\ge 1$, $f$ satisfies Assumption~\ref{hyp:f}, and
  that $\alpha \ge 1$ in \eqref{eq:nls}. Let $(\phi_0, a_0)\in 
  H^{s+2}\times H^s$ with $s>d/2+2$. Let $T>0$ be such that the solution to
  \begin{equation*}
    \d_t \phi+\frac{1}{2}|\nabla \phi|^2=0;\quad \phi_{\mid t=0}=\phi_0
  \end{equation*}
satisfies $\phi \in C([0,T];H^{s+2})$. Consider $u^\eps =S^t_\eps
u_0^\eps$ solution to 
  \eqref{eq:nls} with $\alpha\ge 1$ and $u_0^\eps$ given by
  \eqref{eq:ci}. 
There exist $\eps_0>0$ and $C,c_0$ independent of $\eps\in (0,\eps_0]$
such that for 
  all $\Dt\in (0,c_0]$, for all $n\in \N$ such that $n\Dt\in
  [0,T]$, the following holds:\\ 
$1.$ If we set $a^\eps = u^\eps e^{-i\phi/\eps}$, then 
\begin{equation*}
  \sup_{t\in [0,T]}\|a^\eps(t)\|_{H^s(\R^d)}
  \le 
  C,\quad \forall 
  \eps\in (0,\eps_0].
\end{equation*}
$2.$ There exist $\phi_n^\eps$ and $a_n^\eps$ with
\begin{equation*}
  \|a_n^\eps\|_{H^s(\R^d)}+\|\phi^\eps_n\|_{H^{s+2}(\R^d)}\le C,\quad \forall
  \eps\in (0,\eps_0],
\end{equation*}
such that $(Z_\eps^\Dt)^n \(a_0 e^{i\phi_0/\eps}\) = a_n^\eps
e^{i\phi_n/\eps}$, and the following  error estimate holds:
  \begin{equation*}
 \left\| a_n^\eps - a^\eps(t_n)\right\|_{H^{s-2}}+ \left\| \phi_n^\eps -
   \phi(t_n)\right\|_{H^{s}}\le C \Dt.  
 \end{equation*}
In particular,
  \begin{align*}
   \left\|(Z_\eps^\Dt)^n u_0^\eps -S^{n\Dt}_\eps
      u_0^\eps\right\|_{L^2}\le C \frac{\Dt}{\eps}.
 \end{align*}
\end{proposition}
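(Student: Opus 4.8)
The plan is to remove the fast oscillation by a phase/amplitude representation, handle the phase \emph{exactly} through the eikonal equation, and then compare numerical and exact amplitudes by a nonlinear Lady Windermere fan argument; the hard part will be propagating a uniform $H^s$ bound on the numerical amplitude.

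\textbf{Reduction, and the bound for the exact solution.} Let $\phi\in C([0,T];H^{s+2})$ solve the eikonal equation $\d_t\phi+\tfrac12|\nabla\phi|^2=0$, $\phi_{\mid t=0}=\phi_0$ (the equation in the statement). Writing $u^\eps=a^\eps e^{i\phi/\eps}$ in \eqref{eq:nls}, the $\O(\eps^0)$ terms cancel and one is left with
\[
\d_t a^\eps+\nabla\phi\cdot\nabla a^\eps+\tfrac12 a^\eps\Delta\phi=i\tfrac\eps2\Delta a^\eps-i\eps^{\alpha-1}f\(|a^\eps|^2\)a^\eps,\qquad a^\eps_{\mid t=0}=a_0 .
\]
Two features make this uniformly well posed in $\eps$: $i\tfrac\eps2\Delta$ is skew-adjoint, hence drops out of $L^2$-based energy estimates and costs no derivative; and, since $\alpha\ge 1$, the nonlinear term carries the harmless factor $\eps^{\alpha-1}\le 1$. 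A tame $H^s$ energy estimate (using that $H^s$ is an algebra, together with the composition/convolution bounds from Assumption~\ref{hyp:f}), combined with the transport identity $\d_t|a^\eps|^2+\DIV\(|a^\eps|^2\nabla\phi\)=-\eps\DIV\IM\(\overline{a^\eps}\nabla a^\eps\)$ (which controls $\|a^\eps\|_{L^\infty}$ on $[0,T]$ up to an $\O(\eps)$ defect), then gives $\sup_{t\in[0,T]}\|a^\eps(t)\|_{H^s}\le C$ uniformly in $\eps$; this is item~$1$, and is essentially classical WKB analysis in the weakly nonlinear regime (cf. \cite{CaBook}).

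\textbf{Action of the splitting on WKB states.} The same computation with $f$ switched off shows that $X_\eps^t$ maps $b\,e^{i\psi/\eps}$ to $\tilde b^\eps e^{i(\Phi^t\psi)/\eps}$, where $\Phi^t$ is the flow of the free Hamilton--Jacobi equation (identical to the eikonal equation, hence smooth on $[0,T]$) and $\tilde b^\eps$ solves the \emph{linear} equation $\d_t\tilde b+\nabla\varphi\cdot\nabla\tilde b+\tfrac12\tilde b\Delta\varphi=i\tfrac\eps2\Delta\tilde b$; call the induced amplitude map $\mathcal X_\eps^t$. By \eqref{eq:Yexpl}, $Y_\eps^t$ maps $b\,e^{i\psi/\eps}$ to $\(b\,e^{-i\eps^{\alpha-1}t f(|b|^2)}\)e^{i\psi/\eps}$, \emph{leaving the phase $\psi$ unchanged}; call the induced (nonlinear) amplitude map $\mathcal Y_\eps^t$. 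Iterating $Z_\eps^\Dt=Y_\eps^\Dt X_\eps^\Dt$ then gives $\(Z_\eps^\Dt\)^n\(a_0 e^{i\phi_0/\eps}\)=a_n^\eps e^{i\phi_n^\eps/\eps}$ with $\phi_n^\eps=(\Phi^\Dt)^n\phi_0=\Phi^{t_n}\phi_0=\phi(t_n)$, the semigroup property being legitimate since $\phi$ stays smooth on $[0,T]$. Thus one may take the numerical phase equal to $\phi(t_n)$ (so the phase error vanishes and $\|\phi_n^\eps\|_{H^{s+2}}\le\|\phi\|_{C([0,T];H^{s+2})}$), and everything reduces to comparing $a_n^\eps=\(\mathcal Z_\eps^\Dt\)^n a_0$, where $\mathcal Z_\eps^\Dt=\mathcal Y_\eps^\Dt\mathcal X_\eps^\Dt$, with $a^\eps(t_n)=\mathcal S_\eps^{t_n}a_0$, $\mathcal S_\eps^t$ being the (non-autonomous) flow of the amplitude equation above.

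\textbf{Local error, stability, telescoping.} On a fixed ball of $H^s$ and uniformly in $\eps$, the Lie local error obeys $\|\mathcal Z_\eps^\Dt a-\mathcal S_\eps^{t+\Dt,t}a\|_{H^{s-2}}\le C\Dt^2$: expanding both flows in $\Dt$, the surviving leading term is the commutator of the field $-\nabla\varphi\cdot\nabla-\tfrac12\Delta\varphi+i\tfrac\eps2\Delta$ with $a\mapsto-i\eps^{\alpha-1}f(|a|^2)a$; the $i\tfrac\eps2\Delta$-part contributes only $\O(\eps^\alpha)$, the transport part a bounded operator, and (with the higher-order Duhamel remainder) the net loss of at most two derivatives produces the $H^{s-2}$ norm (compare the local analysis in \cite{DeTh13}, which is cleaner here since the oscillation is gone). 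For stability, $\mathcal X_\eps^\Dt$ is linear with $\|\mathcal X_\eps^\Dt\|_{H^{s-2}\to H^{s-2}}\le e^{C\Dt}$ (the skew Laplacian drops out again), and $\mathcal Y_\eps^\Dt$ is Lipschitz on bounded subsets of $H^{s-2}$ with constant $1+C\Dt$. Feeding these into the telescoping identity $a_n^\eps-a^\eps(t_n)=\sum_{j=0}^{n-1}\(\mathcal Z_\eps^\Dt\)^{n-1-j}\(\mathcal Z_\eps^\Dt-\mathcal S_\eps^{t_{j+1},t_j}\)\mathcal S_\eps^{t_j}a_0$ and summing gives $\|a_n^\eps-a^\eps(t_n)\|_{H^{s-2}}\le C\Dt$ --- \emph{provided all the iterates stay in a fixed ball of $H^s$}.

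\textbf{The $H^s$ bound: the main obstacle.} That proviso is the heart of the matter: the $H^{s-2}$ error estimate does not control $\|a_n^\eps\|_{H^s}$ (genuine derivative loss), and a crude discrete $H^s$ energy estimate only survives on a possibly short time interval, since the nonlinear term would a priori let the discrete norm blow up before $T$. The remedy is a two-tier bootstrap run inside a single induction on $n$. Assuming $\|a_j^\eps\|_{H^s}\le 2M$ for $j\le n$, with $M$ to be chosen, the fan argument of the previous step gives $\|a_j^\eps-a^\eps(t_j)\|_{H^{s-2}}\le C(M,T)\Dt$; interpolating with the crude bound gives $\|a_j^\eps-a^\eps(t_j)\|_{H^{s-1}}\le C\Dt^{1/2}$, hence (as $s-1>d/2+1$) $\|a_j^\eps\|_{W^{1,\infty}}\le\|a^\eps(t_j)\|_{W^{1,\infty}}+C\Dt^{1/2}\le C_\sharp$, a \emph{fixed} constant; and then the discrete $H^s$ energy estimate is \emph{tame} --- for $\mathcal X_\eps^\Dt$ the classical transport estimate (Laplacian dropping out), for $\mathcal Y_\eps^\Dt$ a Moser estimate showing that multiplication by the unimodular $e^{-i\eps^{\alpha-1}\Dt f(|a|^2)}$ changes the $H^s$ norm by a factor $1+\Dt\,C(\|a\|_{W^{1,\infty}})$ --- so its growth constant depends only on $C_\sharp$ and $\|\phi\|_{H^{s+2}}$, and iterating yields $\|a_{n+1}^\eps\|_{H^s}\le e^{C(C_\sharp)T}\|a_0\|_{H^s}=:M\le 2M$. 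Choosing $M$ this way and then $c_0,\eps_0$ small enough that $C\Dt^{1/2}\le1$ closes the induction; this is item~$2$. Finally, since the phases coincide, reconstruction of the wave function gives $\left\|\(Z_\eps^\Dt\)^n u_0^\eps-S_\eps^{n\Dt}u_0^\eps\right\|_{L^2}=\|a_n^\eps-a^\eps(t_n)\|_{L^2}\le\|a_n^\eps-a^\eps(t_n)\|_{H^{s-2}}\le C\Dt\le C\Dt/\eps$. I expect exactly this interleaving to be the delicate point: the fan argument and the tame energy estimate each need a fixed $H^s$-ball that only the other supplies, so both must be propagated together, the whole scheme resting on the weakly nonlinear structure ($\alpha\ge1$, whence a uniformly bounded nonlinear term) and on the skew-adjointness of the semiclassical Laplacian.
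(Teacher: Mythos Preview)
Your strategy is essentially the paper's: factor out the oscillation via the eikonal phase, note that in the weakly nonlinear regime the numerical and exact phases coincide (so the problem reduces to the amplitude), compute the Lie commutator to get an $H^{s-2}$ local error of order $\Dt^2$, and close by a Lady Windermere fan coupled with a bootstrap that propagates an $H^s$ bound on the numerical amplitude. Your observation that the phase error is identically zero is a clean simplification; the paper carries the pair $(\phi,a)$ throughout but the phase component is trivial for the same reason.

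There is, however, a genuine gap in the fan step. The identity you write,
\[
a_n^\eps-a^\eps(t_n)=\sum_{j=0}^{n-1}\bigl(\mathcal Z_\eps^{\Dt}\bigr)^{n-1-j}\bigl(\mathcal Z_\eps^{\Dt}-\mathcal S_\eps^{t_{j+1},t_j}\bigr)\mathcal S_\eps^{t_j}a_0,
\]
is only formal when $\mathcal Z_\eps^{\Dt}$ is nonlinear: the correct telescoping produces terms of the form
$\bigl(\mathcal Z_\eps^{\Dt}\bigr)^{n-1-j}\bigl(\mathcal Z_\eps^{\Dt}a^\eps(t_j)\bigr)-\bigl(\mathcal Z_\eps^{\Dt}\bigr)^{n-1-j}\bigl(a^\eps(t_{j+1})\bigr)$,
and to estimate each of these in $H^{s-2}$ you need Lipschitz control of $\bigl(\mathcal Z_\eps^{\Dt}\bigr)^{n-1-j}$ along the \emph{auxiliary} numerical trajectories $\bigl(\mathcal Z_\eps^{\Dt}\bigr)^k a^\eps(t_j)$, for all $j$ and $k$ with $j+k\le n$. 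Your bootstrap hypothesis only controls the main trajectory $a_j^\eps=\bigl(\mathcal Z_\eps^{\Dt}\bigr)^j a_0$, so the Lipschitz constants are not justified. This is precisely why the paper (following \cite{HLR13}) uses the \emph{other} fan,
\[
a_n^\eps-a^\eps(t_n)=\sum_{j=0}^{n-1}\Bigl(\mathcal S_\eps^{(n-j-1)\Dt}\mathcal Z_\eps^{\Dt}a_j^\eps-\mathcal S_\eps^{(n-j-1)\Dt}\mathcal S_\eps^{\Dt}a_j^\eps\Bigr),
\]
with the \emph{exact} flow on the outside: then one only needs (i) Lipschitz stability of $\mathcal S_\eps^t$ on $H^{s-2}$ (which follows from the well-posedness theory, uniformly in $\eps$), and (ii) an $H^s$ bound on the single trajectory $a_j^\eps$, which is exactly your bootstrap assumption. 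With this switch, your argument goes through and matches the paper's.

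A minor remark: your interpolation to $W^{1,\infty}$ is more than needed. Since the phase is fixed, the growth rate of $\mathcal X_\eps^{\Dt}$ in $H^s$ depends only on $\|\phi\|_{C([0,T];H^{s+2})}$ (the transport coefficients), not on $a$; and the Moser estimate for $\mathcal Y_\eps^{\Dt}$ requires only $\|a\|_{L^\infty}$, which follows directly from $\|a_j^\eps-a^\eps(t_j)\|_{H^{s-2}}\le C\Dt$ and $s-2>d/2$. This removes the $\Dt^{1/2}$ detour and makes the choice of $M$ transparent.
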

It may seem surprising that even in the  weakly nonlinear
regime $\alpha = 1$, the result is local in time, and valid only
before the possible formation of caustics.
As a matter of fact, the behavior of the nonlinear solution $u^\eps$
is essentially not understood past the caustic; see e.g. \cite{CaBook}. 
\smallbreak

\noindent{\bf Notations.} Throughout the text, 
all the constants 
are independent of $\eps\in (0,1]$. For $(\alpha^\eps)_{0<\eps\le 1}$
and $(\beta^\eps)_{0<\eps\le 1}$  two families 
of positive real numbers, we write $\alpha^\eps \lesssim \beta^\eps$ if 
$\displaystyle \limsup_{\eps\to 0}\alpha^\eps/\beta^\eps <\infty$.

\section{Action of the numerical scheme in the WKB regime}
\label{sec:frame}

Our approach consists in sticking to
the WKB framework. We write the solutions to \eqref{eq:nls},
under the form
\begin{equation}\label{eq:bkw}
  a^\eps(t,x) e^{i\phi^\eps(t,x)/\eps},
\end{equation}
with $a^\eps$ and $\phi^\eps$ bounded in $H^s(\R^d)$
\emph{uniformly in} $\eps \in (0,1]$. Here, the ``phase'' $\phi^\eps$
is real-valued, and the ``amplitude'' $a^\eps$ is complex-valued; of
course, such a representation is not unique. 
As a matter of fact, both
$a^\eps$ and $\phi^\eps$ must be expected to depend on
$\eps$. Functions of this form will be referred to as \emph{WKB
  states} throughout the text. 
From now on, and up to Section~\ref{sec:wnlgo}, we assume $\alpha=0$
in \eqref{eq:nls}.

\subsection{A stable phase/amplitude decomposition}
\label{sec:decomp}

The important remark consists in noticing that the flows associated to
\eqref{eq:X} and \eqref{eq:Y} preserve the structure of WKB states. 

\subsubsection*{Nonlinear flow}

In the case of the nonlinear flow \eqref{eq:Y}, the exact formula
\eqref{eq:Yexpl} shows immediately that a WKB state evolves as a WKB state:
if $w^\eps_{\mid t=0} = \alpha^\eps e^{i\varphi^\eps/\eps}$, then the
solution to \eqref{eq:Y} is given by 
\begin{equation*}
  w^\eps(t,x) = \alpha^\eps(x)
  e^{i\(\varphi^\eps(x)-tf\(|\alpha^\eps(x)|^2\)\)/\eps}. 
\end{equation*}
This is indeed of the form \eqref{eq:bkw}, with
\begin{equation*}
  a^\eps(t,x) = \alpha^\eps(x),\quad \phi^\eps(t,x)
  =\varphi^\eps(x)-tf\(|\alpha^\eps(x)|^2\). 
\end{equation*}
We can therefore rewrite the action of $Y_\eps^t$ on WKB states as the
action of the flow 
${\mathcal Y}_t^\eps$ on phase/amplitude pairs $(\phi,a)$ characterized by
\begin{equation}
  \label{eq:Ysplit}
  \left\{
\begin{aligned}
      &\d_t \phi^\eps+f\(|a^\eps|^2\)=0 ;\quad
      \phi^\eps_{\mid t=0} =  
\phi_0^\eps,\\
&\d_t a^\eps = 0;\quad a^\eps_{\mid t=0}=a_0^\eps. 
    \end{aligned}
\right.
\end{equation}

\subsubsection*{Linear flow}
The analysis of the linear flow \eqref{eq:linear} is
less straightforward, and requires more care than the nonlinear
flow. Consider the system 
\begin{equation}
  \label{eq:Xbkw}
  \left\{
    \begin{aligned}
      &\d_t \phi^\eps+\frac{1}{2}|\nabla \phi^\eps|^2 =0 ;\quad
      \phi^\eps_{\mid t=0} = 
\phi_0^\eps,\\
&\d_t a^\eps +\nabla \phi^\eps\cdot \nabla a^\eps
+\frac{1}{2}a^\eps\Delta \phi^\eps = i\frac{\eps}{2}\Delta
a^\eps;\quad a^\eps_{\mid t=0}=a_0^\eps. 
    \end{aligned}
\right.
\end{equation}
Note that this is not exactly the system corresponding to standard WKB
analysis, because of the term 
$\eps\Delta a^\eps$ in the second equation, which is discarded in WKB
approximation. The first 
equation is an eikonal equation, which has a
smooth solution at least locally in time (see e.g. \cite{CaBook}), and
energy estimates then follow easily for the second equation. We
emphasize the fact that \eqref{eq:Xbkw} is \emph{equivalent} to
\eqref{eq:linear}
in the case of initial WKB states \eqref{eq:bkw}, at least locally in
time, \emph{modulo} the eikonal equation. Indeed, given an 
initial phase $\phi_0^\eps$, we can solve, locally in time, the eikonal equation
\begin{equation}
  \label{eq:eikonal}
  \d_t \phi^\eps+\frac{1}{2}|\nabla \phi^\eps|^2 =0 ;\quad
      \phi^\eps_{\mid t=0} = 
\phi_0^\eps.
\end{equation}
In general, the solution to \eqref{eq:eikonal} does not remain smooth
for all time, due to the formation of caustics (see
e.g. \cite{CaBook}). 
We note that
$w^\eps=\nabla \phi^\eps$ solves a (multidimensional) Burgers equation
\begin{equation*}
  \d_t w^\eps +w^\eps\cdot \nabla w^\eps=0;\quad w^\eps_{\mid
    t=0}=\nabla \phi_0^\eps. 
\end{equation*}
This remark will be used to derive \emph{a priori} estimates for the
system \eqref{eq:Xbkw}. Once $\phi^\eps$ is known, then $v^\eps$,
solution to \eqref{eq:linear}, and $a^\eps$, are related through the
formula
\begin{equation*}
  v^\eps=a^\eps e^{i\phi^\eps/\eps},
\end{equation*}
which yields an obvious bijective correspondence between these two
functions (for a fixed $\phi^\eps$). Even though
there is no uniqueness in the choice of $\phi^\eps$,
we conclude that if  
the initial datum is a WKB state, $v^\eps_{\mid t=0} = a_0^\eps
e^{i\phi_0^\eps/\eps}$, then at least locally in time, $v^\eps$ remains
a WKB state, since it can be written as $v^\eps = a^\eps
e^{i\phi^\eps/\eps}$, where $(\phi^\eps,a^\eps)$ is the solution to
\eqref{eq:Xbkw}. Following the same convention as in the case of the
nonlinear flow, we denote by ${\mathcal X}_\eps^t$ the flow acting of
phase/amplitude pairs,
\begin{equation*}
  {\mathcal X}_\eps^t 
  \begin{pmatrix}
    \phi_0^\eps\\
a_0^\eps
  \end{pmatrix}
=
\begin{pmatrix}
  \phi^\eps(t)\\
a^\eps(t)
\end{pmatrix},
\end{equation*}
where $(\phi^\eps,a^\eps)$ is the solution to
\eqref{eq:Xbkw}. Similary, we write $\mathcal Z_\eps^t = \mathcal
Y_\eps^t \mathcal X_\eps^t$. 
\subsection{Rewriting the splitting method in the WKB regime}

Instead of analyzing directly the equations
\eqref{eq:linear}--\eqref{eq:Y}, we shall work on
\eqref{eq:Xbkw}--\eqref{eq:Ysplit}, in view of the previous
subsection. We denote by $\Pi^\eps$ the wave reconstruction operator
\begin{equation*}
  \Pi^\eps 
  \begin{pmatrix}
    \phi^\eps\\
a^\eps
  \end{pmatrix}
=
a^\eps e^{i\phi^\eps/\eps},
\end{equation*}
and we note the identity, which is the key conclusion of the above
analysis:
\begin{equation}\label{eq:num}
  \Pi^\eps \mathcal Z_t^\eps 
  \begin{pmatrix}
    \phi^\eps\\
a^\eps
  \end{pmatrix}
=
Z_t^\eps\(a^\eps e^{i\phi^\eps/\eps}\).
\end{equation}
In view of the obvious remark
\begin{equation*}
 \Pi^\eps 
  \begin{pmatrix}
    \phi\\
a
  \end{pmatrix}  =
 \Pi^\eps 
  \begin{pmatrix}
    \phi -\eps \theta\\
a e^{i\theta}
  \end{pmatrix},\quad \forall \theta\in \R, 
\end{equation*}
we see that working with $(\phi^\eps,a^\eps)$ is not equivalent to
working with the wave function $a^\eps e^{i\phi^\eps/\eps}$. However,
we only use the fact that the numerical solution can be represented by
this decomposition, and no uniqueness argument is needed, except the
fact that the solutions to \eqref{eq:linear} and \eqref{eq:Y},
respectively, are unique.  
\smallbreak

We finally notice that the form \eqref{eq:bkw} (with
$a^\eps$ and $\phi^\eps$ bounded in $H^s(\R^d)$
uniformly in $\eps$) is preserved by the exact flow. This is so thanks
to the gauge invariance of the nonlinearity, which
  rules out the appearance of new phases or new harmonics by nonlinear
  interaction (an aspect which also
appears when solving \eqref{eq:Y}). 
In the
case of a local defocusing nonlinearity (typically $f(\rho)=\rho $),
recall the original idea of Grenier \cite{Grenier98} to study the
semi-classical limit for \eqref{eq:nls}: seek the solution $u^\eps$ to
\eqref{eq:nls} under the form \eqref{eq:bkw}, with $\phi^\eps$
real-valued and $a^\eps$ complex-valued. One gains a degree of
freedom, and the choice of Grenier consists in imposing
\begin{equation}
  \label{eq:grenier}
  \left\{
\begin{aligned}
      &\d_t \phi^\eps+\frac{1}{2}|\nabla \phi^\eps|^2 +f\(|a^\eps|^2\)=0 ;\quad
      \phi^\eps_{\mid t=0} =  
\phi_0,\\
&\d_t a^\eps +\nabla \phi^\eps\cdot \nabla a^\eps
+\frac{1}{2}a^\eps\Delta \phi^\eps = i\frac{\eps}{2}\Delta
a^\eps;\quad a^\eps_{\mid t=0}=a_0. 
    \end{aligned}
\right.
\end{equation}
This choice differs from the standard Madelung transform, which is
limited by the presence of vacuum (zeroes of $a^\eps$; see
\cite{CaDaSa12}). 
Also, an important technical feature of
  \eqref{eq:grenier} is that the term $i\Delta a^\eps$ is
  skew-symmetric. Therefore, it plays no role in $H^s$-energy 
  estimates. In particular, it causes no loss of regularity.
Under
Assumption~\ref{hyp:r3}, the adaptation of the  approach of Grenier can be
found in \cite{AC-SP,LiLinEJDE} (see also
\cite{LiuTadmor,Mas10,Mas11b} for the case of 
Schr\"odinger-Poisson system in low dimensions, where low frequencies
cause technical difficulties). We denote by ${\mathcal S}_t^\eps$
the flow associated to \eqref{eq:grenier}. 
\begin{remark}
In \eqref{eq:grenier}, the initial data are supposed implicitly
independent of $\eps$. This is merely for the sake of consistency in
future references. Throughout this paper,
the flow associated to \eqref{eq:grenier} will be considered for
initial data which may depend on $\eps$, but which are uniformly
bounded in suitable Sobolev spaces. 
\end{remark}
\smallbreak

Instead of analyzing directly the splitting
method for \eqref{eq:nls} as presented in Section~\ref{sec:split}, we
shall therefore analyze a splitting method for \eqref{eq:grenier}: when
the term $f$ is discarded, we recover \eqref{eq:Xbkw}, which is solved
alternatingly with \eqref{eq:Ysplit}. 
The latter system consists indeed in dropping out the Laplacian in
\eqref{eq:grenier}, since all spatial derivatives have disappeared.

\section{Technical background}
\label{sec:cadre}
As noticed in \cite{AC-SP}, the following lemma turns out to be
very helpful.
\begin{lemma}\label{lem:gain}
  Let $s\ge 0$. Under Assumption~\ref{hyp:r3}, there
  exists $C$ such that
  \begin{equation}\label{eq:gain}
    \|\nabla f(\rho)\|_{H^{s+1}(\R^d)}\le
    C\(\|\rho\|_{H^s(\R^d)}+\|\rho\|_{L^1(\R^d)}\) ,\quad
    \forall \rho \in H^s(\R^d)\cap L^1(\R^d). 
  \end{equation}
If in addition $s>d/2$, there exists $C$ such that
  \begin{equation}\label{eq:phaseLinfty}
    \|f(\rho)\|_{L^\infty(\R^d)}\le
    C\(\|\rho\|_{H^s(\R^d)}+\|\rho\|_{L^1(\R^d)}\) ,\quad
    \forall \rho \in H^s(\R^d)\cap L^1(\R^d). 
  \end{equation}
\end{lemma}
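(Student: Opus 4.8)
The plan is to pass to the Fourier side and use Plancherel's theorem. Since $f(\rho)=K\ast\rho$, with the normalization of Assumption~\ref{hyp:r3} one has $\widehat{\nabla f(\rho)}(\xi)=i(2\pi)^{d/2}\xi\,\widehat K(\xi)\widehat\rho(\xi)$, hence
\[
  \|\nabla f(\rho)\|_{H^{s+1}(\R^d)}^2 = (2\pi)^d\int_{\R^d}(1+|\xi|^2)^{s+1}|\xi|^2|\widehat K(\xi)|^2|\widehat\rho(\xi)|^2\,d\xi .
\]
I would split this integral over $\{|\xi|>1\}$ and $\{|\xi|\le 1\}$; the two regions are controlled by $\|\rho\|_{H^s}$ and $\|\rho\|_{L^1}$ respectively, and it is precisely the low-frequency region that forces the $L^1$ term and makes the distinction $d\le 2$ versus $d\ge 3$ in Assumption~\ref{hyp:r3} relevant.

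On $\{|\xi|>1\}$ I would use the bound $|\xi|^2|\widehat K(\xi)|\le C$ (which holds in both cases of the assumption). Then $(1+|\xi|^2)^{s+1}|\xi|^2|\widehat K(\xi)|^2\le C^2(1+|\xi|^2)^{s+1}|\xi|^{-2}\le 2C^2(1+|\xi|^2)^s$ since $(1+|\xi|^2)/|\xi|^2\le 2$ for $|\xi|\ge 1$, so this piece is $\le C\|\rho\|_{H^s}^2$. On $\{|\xi|\le 1\}$ I would bound $(1+|\xi|^2)^{s+1}\le 2^{s+1}$ and $|\widehat\rho(\xi)|\le (2\pi)^{-d/2}\|\rho\|_{L^1}$, reducing matters to $\int_{|\xi|\le 1}|\xi|^2|\widehat K(\xi)|^2\,d\xi<\infty$. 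For $d\ge 3$, $|\xi|^2|\widehat K(\xi)|\le C$ gives $|\xi|^2|\widehat K(\xi)|^2\le C^2|\xi|^{-2}$, integrable near the origin exactly because $d\ge 3$; for $d\le 2$, $(1+|\xi|^2)|\widehat K(\xi)|\le C$ gives $|\widehat K(\xi)|\le C$, so $|\xi|^2|\widehat K(\xi)|^2\le C^2$ is trivially integrable on the unit ball. This yields the low-frequency piece $\le C\|\rho\|_{L^1}^2$ and proves \eqref{eq:gain}.

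For \eqref{eq:phaseLinfty} I would use $\|f(\rho)\|_{L^\infty}\le (2\pi)^{-d/2}\|\widehat{f(\rho)}\|_{L^1}=\|\widehat K\widehat\rho\|_{L^1}$ and split again. Since $|\widehat K(\xi)|\le C|\xi|^{-2}$ for $|\xi|>1$ in both cases, Cauchy--Schwarz gives $\int_{|\xi|>1}|\widehat K||\widehat\rho|\,d\xi\le C\bigl(\int_{|\xi|>1}(1+|\xi|^2)^{-s}\,d\xi\bigr)^{1/2}\|\rho\|_{H^s}$, which is finite precisely because $s>d/2$. On $\{|\xi|\le 1\}$, $\int_{|\xi|\le 1}|\widehat K||\widehat\rho|\,d\xi\le (2\pi)^{-d/2}\|\rho\|_{L^1}\int_{|\xi|\le 1}|\widehat K(\xi)|\,d\xi$, and the last integral is finite by the same dichotomy: $|\widehat K(\xi)|\le C|\xi|^{-2}$ is integrable near $0$ when $d\ge 3$, while $|\widehat K(\xi)|\le C$ handles $d\le 2$.

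The only genuinely delicate point is the low-frequency analysis: there one can control $\widehat\rho$ only through $\|\widehat\rho\|_{L^\infty}\le (2\pi)^{-d/2}\|\rho\|_{L^1}$, so the $L^1$ norm is unavoidable, and the local integrability near $\xi=0$ of the weight $|\xi|^2|\widehat K(\xi)|^2$ (resp. $|\widehat K(\xi)|$) requires exactly the dimensional hypothesis built into Assumption~\ref{hyp:r3}: boundedness of $|\xi|^2\widehat K$ suffices when $d\ge 3$, whereas for $d\le 2$ one must instead demand boundedness of $(1+|\xi|^2)\widehat K$. Everything else reduces to Plancherel and elementary estimates.
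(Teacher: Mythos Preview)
Your proof is correct and follows essentially the same approach as the paper: Plancherel, a low/high frequency split, Hausdorff--Young for the low frequencies, and the integrability of $(1+|\xi|^2)^{-s}$ for $s>d/2$ in the $L^\infty$ estimate. The only minor difference is that the paper handles the case $d\le 2$ in \eqref{eq:gain} globally without splitting (using $\sup_\xi(1+|\xi|^2)|\widehat K(\xi)|$ directly to bound the whole integral by $\|\rho\|_{H^s}^2$, so the $L^1$ term is not even needed there), whereas you split uniformly in both dimensional regimes; your version still yields the stated inequality.
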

\begin{proof}
 By Plancherel formula,
 \begin{align*}
   \|\nabla f(\rho)\|_{H^{s+1}(\R^d)}^2 &= \int_{\R^d}
   |\xi|^2\(1+|\xi|^2\)^{s+1} |\widehat 
   K(\xi)|^2|\widehat \rho(\xi)|^2d\xi\\
&\le \(\sup_{\xi\in \R^d} \(1+|\xi|^2\) |\widehat 
   K(\xi)|\)^2 \|\rho\|_{H^s}^2,
 \end{align*}
hence (a weaker version of) the lemma in the first case of
Assumption~\ref{hyp:r3}. 
If $d\ge 3$, 
\begin{align*}
  \int_{|\xi|\le 1}
   |\xi|^2\(1+|\xi|^2\)^{s+1} |\widehat 
   K(\xi)|^2|\widehat \rho(\xi)|^2d\xi &\le \(\sup_{\xi\in \R^d}
   |\xi|^2 |\widehat  
   K(\xi)|\)^2 \int_{|\xi|\le 1}|\xi|^{-2}|\widehat \rho(\xi)|^2d\xi\\
& \le C \|\widehat \rho\|_{L^\infty(\R^d)}^2\int_0^1 r^{d-3}dr\le C
\|\rho\|_{L^1(\R^d)}^2, 
\end{align*}
where we have used spherical coordinates and Hausdorff-Young's
inequality. This yields the first part of the lemma. For the second
part, we use the same tools, 
\begin{equation*}
  \|f(\rho)\|_{L^\infty}\le (2\pi)^{-d/2}\|\widehat{f(\rho)}\|_{L^1} =
  \|\widehat K \widehat \rho\|_{L^1}. 
\end{equation*}
Split the integral between the two regions $\{|\xi|\le 1\}$ and
$\{|\xi|>1\}$:
\begin{align*}
  &\int_{|\xi|\le 1}|\widehat K (\xi)| |\widehat \rho(\xi)|d\xi \le
  C \|\widehat \rho\|_{L^\infty} \int_0^1 r^{d-1}\left\lvert \widehat
    K (r)\right\rvert dr\lesssim
  \|\rho\|_{L^1},\\
&\int_{|\xi|> 1}|\widehat K (\xi)| |\widehat \rho(\xi)|d\xi \le
C\|\widehat \rho\|_{L^1}\lesssim \|\rho\|_{H^s},\quad \text{since }s>d/2.
\end{align*}
This estimate is not sharp, since we do not
use the decay of $\widehat K$ at infinity. 
\end{proof}
As in \cite{AC-SP}, we infer the following result, concerning the
exact solution, that is, the solution to \eqref{eq:grenier}.  This
result implies the first point of Theorem~\ref{theo:main}. 
\begin{proposition}\label{prop:S1}
  Suppose that $d\ge 1$, and that $f$ satisfies
  Assumption~\ref{hyp:r3}. Let $\(\nabla \phi_0,a_0\)\in  
  H^{s+1}\times H^s$ with $s>d/2+1$, and let $T>0$ be such that the solution to
  \eqref{eq:euler} satisfies $(v,\rho)\in C([0,T];H^{s+1}\times
  H^s)$. Then there exists $\eps_0>0$ such that for all $\eps\in
  (0,\eps_0]$, \eqref{eq:grenier} has a unique 
  solution, which satisfies $(\nabla \phi^\eps,a^\eps) \in C([0,T];H^{s+1}\times
  H^s)$,   uniformly in
  $\eps\in (0,\eps_0]$: there exists $C(T,\|a_0\|_{H^s}, \|\nabla
  \phi_0\|_{H^{s+1}})$ independent of $\eps\in (0,\eps_0]$ such that
  \begin{equation*}
    \sup_{t\in [0,T]}\( \|a^\eps(t)\|_{H^s(\R^d)} + \|\nabla
    \phi^\eps(t)\|_{H^{s+1}(\R^d)}\) \le C\(T,\|a_0\|_{H^s(\R^d)}, \|\nabla
  \phi_0\|_{H^{s+1}(\R^d)}\).
  \end{equation*}
If in addition $\phi_0\in L^\infty(\R^d)$, then $\phi^\eps \in
C([0,T];L^\infty (\R^d))$ and
\begin{equation*}
  \sup_{t\in [0,T]}\|
    \phi^\eps(t)\|_{L^\infty(\R^d)} \le \|\phi_0\|_{L^\infty} +
    \underline C\(T,\|a_0\|_{H^s(\R^d)}, \|\nabla 
  \phi_0\|_{H^{s+1}(\R^d)}\).
\end{equation*}
\end{proposition}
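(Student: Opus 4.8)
The plan is to run Grenier's scheme~\cite{Grenier98}, in the form used in~\cite{AC-SP}. Differentiating the phase equation in~\eqref{eq:grenier} and setting $v^\eps=\nabla\phi^\eps$, the pair $(v^\eps,a^\eps)$ solves
\begin{equation*}
  \d_t v^\eps+(v^\eps\cdot\nabla)v^\eps+\nabla f\(|a^\eps|^2\)=0,\qquad
  \d_t a^\eps+v^\eps\cdot\nabla a^\eps+\frac12\,a^\eps\DIV v^\eps=i\frac{\eps}{2}\Delta a^\eps.
\end{equation*}
Splitting $a^\eps$ into its real and imaginary parts turns this into a symmetric hyperbolic system for $(v^\eps,\RE a^\eps,\IM a^\eps)$, perturbed by the term $i\frac{\eps}{2}\Delta a^\eps$. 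Two structural features drive the whole argument. First, $i\frac{\eps}{2}\Delta a^\eps$ is \emph{skew-symmetric} on $L^2$, hence invisible in any $H^\sigma$ energy estimate; this is what makes the bounds below both independent of $\eps$ and free of any loss of derivatives. Second, $\nabla f(|a^\eps|^2)$ is \emph{one derivative smoother} than $|a^\eps|^2$ by Lemma~\ref{lem:gain}, which is why the natural, loss-free functional framework is $(v^\eps,a^\eps)\in H^{s+1}\times H^s$.

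For $\eps$ fixed, local existence and uniqueness of $(v^\eps,a^\eps)$ follow from the standard iterative scheme (linearize the transport terms around the previous iterate, keep $i\frac{\eps}{2}\Delta$ on the left-hand side), the skew-symmetry again providing uniform control of the iterates; alternatively, $a^\eps e^{i\phi^\eps/\eps}$ must solve~\eqref{eq:nls}, for which propagation of Sobolev regularity is classical, $\phi^\eps$ being recovered from the eikonal/transport structure. The same computation yields a continuation criterion, \emph{uniform in $\eps$}: the solution persists as long as $\|v^\eps(t)\|_{W^{1,\infty}}+\|a^\eps(t)\|_{W^{1,\infty}}$ stays finite. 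Applying $\d^\beta$ (with $|\beta|\le s+1$ to the $v^\eps$-equation and $|\beta|\le s$ to the $a^\eps$-equation), pairing the results in $L^2$ with $\d^\beta v^\eps$ and $\d^\beta a^\eps$ respectively, using Kato--Ponce and Moser commutator estimates on the transport terms, Lemma~\ref{lem:gain} on $\nabla f(|a^\eps|^2)$, and discarding the skew term, one obtains, with $E^\eps(t):=\|v^\eps(t)\|_{H^{s+1}}^2+\|a^\eps(t)\|_{H^s}^2$,
\begin{equation*}
  \frac{d}{dt}E^\eps(t)\le C\(1+\|v^\eps(t)\|_{W^{1,\infty}}+\|a^\eps(t)\|_{W^{1,\infty}}\)\(1+E^\eps(t)\).
\end{equation*}
The $L^1$-norms appearing in Lemma~\ref{lem:gain} cost nothing here: testing the $a^\eps$-equation against $\overline{a^\eps}$ and taking the real part, the two transport terms cancel and the Laplacian term drops, so $\|a^\eps(t)\|_{L^2}\equiv\|a_0\|_{L^2}$, whence $\bigl\||a^\eps(t)|^2\bigr\|_{L^1}\equiv\|a_0\|_{L^2}^2$.

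By itself the last estimate gives only an $\eps$-independent but possibly short existence time (the inequality is of Riccati type, exactly as for~\eqref{eq:euler}); to reach the lifespan $[0,T]$ of the Euler--Poisson solution one compares with the limit. Let $(v,a)$ solve~\eqref{eq:grenier} with $\eps=0$: its component $(\nabla\phi,|a|^2)=(v,\rho)$ is precisely~\eqref{eq:euler}, which lives on $[0,T]$ by hypothesis, while $a$ then solves a linear transport equation along $v$, so $(v,a)\in C([0,T];H^{s+1}\times H^s)$. Fix $M>0$ and let $T^\eps_*$ be the largest time $\le T$ on which $\|v^\eps(t)\|_{H^{s+1}}+\|a^\eps(t)\|_{H^s}\le M$. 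On $[0,T^\eps_*]$ the difference $(v^\eps-v,a^\eps-a)$ solves the corresponding linearized system, whose only forcing that is not already $O(\eps)$-small is $i\frac{\eps}{2}\Delta a^\eps=i\frac{\eps}{2}\Delta(a^\eps-a)+i\frac{\eps}{2}\Delta a$: the first piece is skew-symmetric, the second is $O(\eps)$ two derivatives below the energy norm. A Gronwall argument in that weaker norm therefore gives $\|v^\eps-v\|_{H^{s-1}}+\|a^\eps-a\|_{H^{s-2}}\le C(M,T)\,\eps$ (the nonlocal term being again handled by Lemma~\ref{lem:gain} and by conservation of $\|a^\eps\|_{L^2}$). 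Interpolating this $O(\eps)$ bound against the uniform $H^{s+1}\times H^s$ bound and using $H^\sigma\hookrightarrow W^{1,\infty}$ for $\sigma>d/2+1$, one gets $\|v^\eps-v\|_{W^{1,\infty}}+\|a^\eps-a\|_{W^{1,\infty}}\to0$, hence
\begin{equation*}
  \|v^\eps(t)\|_{W^{1,\infty}}+\|a^\eps(t)\|_{W^{1,\infty}}\le\sup_{[0,T]}\(\|v\|_{W^{1,\infty}}+\|a\|_{W^{1,\infty}}\)+1\quad\text{on }[0,T^\eps_*],
\end{equation*}
\emph{independently of $M$} and for all $\eps$ small enough. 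Feeding this back into the a priori estimate bounds $E^\eps$ on $[0,T^\eps_*]$ by a constant $K$ depending only on $T$, $\|a_0\|_{H^s}$ and $\|\nabla\phi_0\|_{H^{s+1}}$; choosing $M$ larger than $2\sqrt K$ and invoking the uniform continuation criterion forces $T^\eps_*=T$, which is the claimed uniform bound. The bound on $\phi^\eps$ itself follows by integrating the phase equation, $\|\phi^\eps(t)\|_{L^\infty}\le\|\phi_0\|_{L^\infty}+\int_0^t\(\frac12\|v^\eps\|_{L^\infty}^2+\|f(|a^\eps|^2)\|_{L^\infty}\)ds$, together with~\eqref{eq:phaseLinfty}, $\|v^\eps\|_{L^\infty}\lesssim\|v^\eps\|_{H^{s+1}}$, $\bigl\||a^\eps|^2\bigr\|_{H^s}\lesssim\|a^\eps\|_{H^s}^2$ and $\bigl\||a^\eps|^2\bigr\|_{L^1}=\|a_0\|_{L^2}^2$. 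Uniqueness is a routine $L^2$-type estimate on the difference of two solutions.

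The delicate point is the third step: energy estimates cannot by themselves detect that $T$ may equal the (unquantified) Euler--Poisson lifespan, so closing the bootstrap requires the comparison with the $\eps=0$ solution, and the two-derivative loss in the remainder estimate --- $H^{s-2}$ instead of $H^s$, forced by the $\eps\Delta a$ source --- is unavoidable; it is precisely this loss that makes the interpolation step necessary in order to keep $\|v^\eps\|_{W^{1,\infty}}+\|a^\eps\|_{W^{1,\infty}}$ controlled uniformly in $\eps$. Everything else --- symmetrizability of the system, the commutator estimates, the skew-symmetry of $i\eps\Delta$, and Lemma~\ref{lem:gain} --- is standard.
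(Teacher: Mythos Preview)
Your proof is correct and follows the same framework as the paper: rewrite~\eqref{eq:grenier} as a symmetric hyperbolic system for $(v^\eps,a^\eps)$, exploit the skew-symmetry of $i\frac{\eps}{2}\Delta$ and the smoothing of Lemma~\ref{lem:gain}, then compare with the $\eps=0$ solution to extend to $[0,T]$. The paper's sketch argues that the $\eps=0$ lifespan $T'$ satisfies $T'>T$ and then invokes continuity in $\eps$ implicitly; you make this step explicit via a bootstrap (difference estimate in $H^{s-1}\times H^{s-2}$, interpolation to $W^{1,\infty}$, then closing the energy inequality), which is the standard way to unpack that continuity argument and matches what is done in~\cite{AC-SP}.
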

\begin{proof}[Sketch of the proof]
 Let $w^\eps=\nabla \phi^\eps$. By differentiating in space the first
equation in \eqref{eq:grenier}, we see that any solution to
\eqref{eq:grenier} must solve
\begin{equation}
  \label{eq:grenierv}
  \left\{
\begin{aligned}
      &\d_t w^\eps+w^\eps\cdot \nabla w^\eps +\nabla f\(|a^\eps|^2\)=0 ;\quad
      w^\eps_{\mid t=0} =  
\nabla \phi_0,\\
&\d_t a^\eps +w^\eps\cdot \nabla a^\eps
+\frac{1}{2}a^\eps\DIV w^\eps = i\frac{\eps}{2}\Delta
a^\eps;\quad a^\eps_{\mid t=0}=a_0. 
    \end{aligned}
\right.
\end{equation}
The left hand side corresponds to a hyperbolic symmetric system for
the unknown $\(w^\eps,\RE a^\eps,\IM a^\eps\)\in 
\R^{d+2}$, thanks to Lemma~\ref{lem:gain}, and the shift in regularity
between $w^\eps\in H^{s+1}$ and $a^\eps\in H^s$. 
The right hand side of \eqref{eq:grenierv} is a skew-symmetric term, which 
does not appear in $H^s$ energy estimates. The key point to notice is
that unlike what would happen in the case of the nonlinear
Schr\"odinger equation,  the terms $\nabla f(|a^\eps|^2)$ and $a^\eps
\DIV w^\eps$ are not 
quasilinear, but semilinear (they can be treated as perturbations), in
view of Lemma~\ref{lem:gain} and the functional framework. By standard
theory (see 
e.g. \cite{AlGe07}), \eqref{eq:grenierv} has  a unique solution
$(w^\eps,a^\eps)\in C([0,\tau];H^{s+1}\times
  H^s)$, for some $\tau>0$ independent of $\eps\in (0,1]$.
\smallbreak

We can take $\tau\ge T$ for $\eps$  sufficiently small.  Indeed, if
$T'$ denotes the lifespan of \eqref{eq:grenierv} in the case $\eps=0$,
then necessarily $T'>T$, for if we had $T'\le T$, then by uniqueness
for the Euler-Poisson system, $|a|^2=\rho\in C([0,T];H^s\cap L^1)$ and $w=v\in
C([0,T];H^{s+1})$. Back to the transport equation in \eqref{eq:grenierv},
we infer that $a\in 
L^\infty([0,T];H^{s})$, which yields a contradiction.  
\smallbreak

Finally, we note that $a^\eps$, $w^\eps$ and $\phi^\eps$ are related through the
formula
\begin{equation*}
  \d_t \phi^\eps+\frac{1}{2}|w^\eps|^2+f\(|a^\eps|^2\)=0;\quad
  \phi^\eps_{\mid t=0}=\phi_0
\end{equation*}
Therefore, if $w^\eps$ and $a^\eps$ are known, then $\phi^\eps$ is obtained by a
simple integration in time, and the last estimate of the proposition
follows from \eqref{eq:phaseLinfty}. 
\end{proof}
Note that the above result is expected to be valid only locally in
time, since the solution to \eqref{eq:euler} may develop a
singularity in finite time. In that case for fixed $\eps>0$, $a^\eps$ may become
singular, or remain smooth but become $\eps$-oscillatory for large time,
as suggested by the simulations in 
\cite{CaMo11}. This can be understood as follows: for large time,
several oscillations are expected in $u^\eps$, so they cannot
be carried by only 
one exponential function as in \eqref{eq:bkw}, therefore, $a^\eps$
becomes 
rapidly oscillatory, and its $H^s$-norm is not bounded uniformly in
$\eps\in(0,1]$. 

The analysis of \cite{AC-SP} also implies the
following result.
\begin{proposition}\label{prop:S2}
  Let $d\ge 1$, and $f$ satisfying Assumption~\ref{hyp:f}. Let $R>0$
  and $s>d/2+2$. There exists $T=T(R)>0$ such that if 
  \begin{equation*}
    \|a_0\|_{H^s(\R^d)}+\|\nabla \phi_0\|_{H^{s+1}(\R^d)}\le R,
  \end{equation*}
then \eqref{eq:euler} has a unique solution $(v,\rho)\in
C([0,T];H^{s+1}\times H^s)$. There exist $\eps_0>0$ and $K=K(R)$
independent of $\eps\in 
(0,\eps_0]$ such that if in addition 
$(\nabla \varphi_0,b_0)\in  
  H^{s+1}\times H^s$ satisfies
 \begin{equation*}
    \|b_0\|_{H^s(\R^d)}+\|\nabla \varphi_0\|_{H^{s+1}(\R^d)}\le R,
  \end{equation*}
then for all $t\in [0,T]$, the solutions to \eqref{eq:grenier} with
initial data $(\phi_0,a_0)$ and $(\varphi_0,b_0)$, respectively, satisfy:
\begin{equation*}
  \|a^\eps(t)-b^\eps(t)\|_{H^s}+\|\nabla \phi^\eps(t)-\nabla
  \varphi^\eps(t)\|_{H^{s+1}}\le K\(\|a_0-b_0\|_{H^s}+\|\nabla \phi_0-\nabla
  \varphi_0\|_{H^{s+1}}\).
\end{equation*}
There exists $\kappa=\kappa(R)$ such that if in addition
$\phi_0,\varphi_0\in L^\infty(\R^d)$,  then 
\begin{equation*}
  \|\phi^\eps(t)-\varphi^\eps(t)\|_{L^\infty}\le
  \|\phi_0-\varphi_0\|_{L^\infty} +\kappa
  \(\|a_0-b_0\|_{H^s}+\|\nabla \phi_0-\nabla 
  \varphi_0\|_{H^{s+1}}\). 
\end{equation*}
\end{proposition}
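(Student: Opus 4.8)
The statement is, in essence, the continuous‑dependence estimate for the Grenier‑type system proved in \cite{AC-SP} under Assumption~\ref{hyp:r3}; the plan is to reproduce that argument, incorporating the extra smooth local term $f_2$ and keeping track of the dependence on $\eps$. As in the proof of Proposition~\ref{prop:S1}, I would work with the differentiated system \eqref{eq:grenierv} for $(w^\eps,a^\eps)=(\nabla\phi^\eps,a^\eps)$, and likewise for $(W^\eps,b^\eps)=(\nabla\varphi^\eps,b^\eps)$. The analogue of Proposition~\ref{prop:S1} under Assumption~\ref{hyp:f} holds by the same scheme (this is precisely the framework of \cite{AC-SP}, the addition of the gauge‑invariant real‑valued nonlinearity $f_2$ with $f_2(0)=0$ being handled by Grenier's symmetrization, its contribution to \eqref{eq:grenierv} being of quasilinear symmetric‑hyperbolic type rather than smoothing): it provides a common existence time $T=T(R)$ and the uniform bounds $\sup_{[0,T]}\bigl(\|a^\eps\|_{H^s}+\|\nabla\phi^\eps\|_{H^{s+1}}\bigr)+\sup_{[0,T]}\bigl(\|b^\eps\|_{H^s}+\|\nabla\varphi^\eps\|_{H^{s+1}}\bigr)\le C(R)$ for all $\eps\in(0,\eps_0]$.

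I would then subtract the two copies of \eqref{eq:grenierv} to obtain the system for $(\delta w,\delta a)=(w^\eps-W^\eps,a^\eps-b^\eps)$, of the schematic form $\d_t\delta w+w^\eps\cdot\nabla\delta w+\delta w\cdot\nabla W^\eps+\nabla\bigl(f(|a^\eps|^2)-f(|b^\eps|^2)\bigr)=0$ and $\d_t\delta a+w^\eps\cdot\nabla\delta a+\tfrac12a^\eps\DIV\delta w=i\tfrac\eps2\Delta\delta a-\delta w\cdot\nabla b^\eps-\tfrac12\delta a\DIV W^\eps$, and run an $H^{s+1}$ energy estimate on $\delta w$ together with an $H^s$ energy estimate on $\delta a$, using the same symmetrizer as in the a priori estimate. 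The term $i\tfrac\eps2\Delta\delta a$ is skew‑symmetric, hence disappears from the $H^s$ identity and contributes nothing — in particular, uniformly in $\eps$. The contributions of the nonlocal part $f_1$ are controlled by Lemma~\ref{lem:gain}: writing $|a^\eps|^2-|b^\eps|^2=a^\eps\overline{\delta a}+\delta a\,\overline{b^\eps}$ and using the Moser estimate ($s>d/2$) gives $\|\nabla(f_1(|a^\eps|^2)-f_1(|b^\eps|^2))\|_{H^{s+1}}\le C(R)\|\delta a\|_{H^s}$, the $L^1$‑norms in \eqref{eq:gain} being dominated by $L^2$‑norms and hence by the quantities already bounded by $C(R)$; the contributions of $f_2$ are absorbed into the symmetric structure as in \cite{AC-SP}. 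Adding the two estimates, the cross terms cancel by the choice of symmetrizer, leaving $\tfrac{d}{dt}\bigl(\|\delta a\|_{H^s}^2+\|\delta w\|_{H^{s+1}}^2\bigr)\le C(R)\bigl(\|\delta a\|_{H^s}^2+\|\delta w\|_{H^{s+1}}^2\bigr)$, and Gronwall's lemma yields the first estimate with $K=K(R)=e^{C(R)T}$, independent of $\eps$.

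For the $L^\infty$ bound on the phases I would \emph{not} differentiate: \eqref{eq:grenier} gives $\d_t(\phi^\eps-\varphi^\eps)=-\tfrac12(\nabla\phi^\eps+\nabla\varphi^\eps)\cdot\delta w-\bigl(f(|a^\eps|^2)-f(|b^\eps|^2)\bigr)$, so, integrating in time, $\|\phi^\eps(t)-\varphi^\eps(t)\|_{L^\infty}\le\|\phi_0-\varphi_0\|_{L^\infty}+\int_0^t\bigl(\tfrac12\|(\nabla\phi^\eps+\nabla\varphi^\eps)\cdot\delta w\|_{L^\infty}+\|f(|a^\eps|^2)-f(|b^\eps|^2)\|_{L^\infty}\bigr)d\tau$. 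The first integrand is $\le C(R)\|\delta w\|_{H^{s+1}}$ by Sobolev embedding ($s+1>d/2$), and the second is $\le C(R)\|\delta a\|_{H^s}$ by \eqref{eq:phaseLinfty} applied to $|a^\eps|^2-|b^\eps|^2$; the first estimate of the Proposition then bounds both by $C(R)\bigl(\|a_0-b_0\|_{H^s}+\|\nabla\phi_0-\nabla\varphi_0\|_{H^{s+1}}\bigr)$ uniformly on $[0,T]$, and integrating gives the last inequality with $\kappa=\kappa(R)$.

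The delicate point — the only one that is not bookkeeping — is the $H^{s+1}$ energy estimate for $\delta w$: the term $\delta w\cdot\nabla W^\eps$ (and, in the amplitude equation, $\delta w\cdot\nabla b^\eps$ and $a^\eps\DIV\delta w$) naively costs one derivative, since a priori $W^\eps$ sits only in $H^{s+1}$ and $b^\eps$ only in $H^s$. This is exactly where the structure is used: $w^\eps$ and $W^\eps$ are gradients, so $\nabla W^\eps$ is a symmetric Hessian and $\delta w\cdot\nabla W^\eps$ can be traded for a genuine transport term $W^\eps\cdot\nabla\delta w$ plus a gradient $\nabla(\delta w\cdot W^\eps)$; symmetrizing the quasilinear (Burgers and $f_2$) part then makes the residual top‑order contributions of $\DIV\delta w$ and $\nabla\delta a$ cancel in the combined energy identity, which is the mechanism already exploited in \cite{AC-SP}. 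Checking that this cancellation survives the addition of $f_2$, together with the (routine) $\eps$‑uniformity, is all that really has to be verified.
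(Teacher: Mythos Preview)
The paper itself gives no proof of this proposition; it simply states that ``the analysis of \cite{AC-SP} also implies the following result.'' Your outline --- subtract the two copies of \eqref{eq:grenierv}, run an energy estimate on the difference, use the skew-symmetry of $i\Delta$ and Lemma~\ref{lem:gain} for the nonlocal part, then integrate the undifferentiated phase equation for the $L^\infty$ bound --- is exactly the right architecture, and your treatment of the $L^\infty$ phase estimate is clean and correct.

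The gap is in your resolution of the ``delicate point.'' You correctly flag that the term $\delta w\cdot\nabla W^\eps$ in the $\delta w$-equation (and $\delta w\cdot\nabla b^\eps$ in the $\delta a$-equation) naively costs one derivative in an $H^{s+1}$ estimate, since $W^\eps$ is only in $H^{s+1}$. But neither of the two mechanisms you invoke actually closes the estimate at that top level. First, the gradient identity $\delta w\cdot\nabla W=\nabla(\delta w\cdot W)-W\cdot\nabla\delta w$ is valid (both being gradients), but it merely shifts the problem: in the $H^{s+1}$ energy identity, the term $\langle\Lambda^{s+1}\delta w,\nabla\Lambda^{s+1}(\delta w\cdot W)\rangle$ still carries $s+2$ derivatives after integration by parts, and there is no further cancellation available from the potential structure alone. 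Second, the symmetrizer cancellation you allude to is the Grenier mechanism that pairs $\nabla f_2(|a|^2)$ in the velocity equation with $a\,\DIV w$ in the amplitude equation; it concerns the \emph{coupling} between $w$ and $a$ through $f_2$, not the Burgers self-interaction $w\cdot\nabla w$. In the nonlocal case ($f_2=0$) the $w$-equation decouples at top order, so there is nothing in the $a$-equation to cancel the Burgers loss.

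The robust fix --- and the one actually used in Section~\ref{sec:numer} --- is to drop one level: with both solutions bounded in $\nabla H^{s+1}\times H^s$ by $C(R)$, prove the Lipschitz estimate in $\nabla H^{s}\times H^{s-1}$. At that level the closure is immediate: $\|\delta w\cdot\nabla W\|_{H^{s}}\lesssim\|\delta w\|_{H^{s}}\|\nabla W\|_{H^{s}}\le C(R)\|\delta w\|_{H^{s}}$ since $s>d/2$, and similarly $\|\delta w\cdot\nabla b\|_{H^{s-1}}\lesssim\|\delta w\|_{H^{s-1}}\|\nabla b\|_{H^{s-1}}$ since $s-1>d/2$; the remaining terms are routine. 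This is the standard tame estimate for quasilinear hyperbolic systems (Lipschitz one order below boundedness), it is what the Lady Windermere argument in Proposition~\ref{prop:bound} actually calls for (the norms there are $\nabla H^{s}\times H^{s-1}$, with \eqref{eq:s} supplying the higher bound), and it avoids the derivative-counting obstruction entirely.
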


\section{Estimating the approximate flow}
\label{sec:estimates}

In this section, we prove various estimates concerning the flows
involved in the definition of the numerical scheme, ${\mathcal X}_\eps^t$ and
${\mathcal Y}_\eps^t$. 

\subsection{The nonlinear operator}
\label{sec:nl}

Unlike what happens in most cases when studying splitting operators,
the most delicate operator to control is the linear one, denoted here
by ${\mathcal X}_\eps^t$, while in the present framework, ${\mathcal
  Y}_\eps^t$ turns out to be the 
simpler of the two. 
\begin{lemma}\label{lem:est-Y}
  Let $s>d/2$ and $\phi_0^\eps,a_0^\eps\in \Sch'(\R^d)$, with $(\nabla
  \phi_0^\eps,a_0^\eps)\in H^{s+1}\times H^s$, for some $s>d/2$.  The solution to
  \eqref{eq:Ysplit} is given by
  \begin{equation*}
    \phi^\eps(t)=\phi_0^\eps -tf\(|a_0^\eps|^2\);\quad a^\eps(t)=a_0^\eps.
  \end{equation*}
In particular, there exists
$C=C(\mu)$ such that if $\|a_0^\eps\|_{L^\infty}\le \mu$, 
\begin{equation*}
\|a^\eps(t)\|_{H^s}=\|a_0^\eps\|_{H^s},\quad  \|\nabla \phi^\eps(t)\|_{H^{s+1}} \le
  \|\nabla \phi_0^\eps\|_{H^{s+1}} +Ct \|a_0^\eps\|_{H^s},\quad \forall
  t\ge 0.
\end{equation*}
Finally, if $\phi_0^\eps\in L^\infty(\R^d)$, then there exists
$C=C(\mu)$ such that if $\|a_0^\eps\|_{L^\infty}\le \mu$, 
\begin{equation*}
\| \phi^\eps(t)\|_{L^\infty } \le
  \|\phi_0^\eps\|_{L^\infty}+ Ct \|a_0^\eps\|_{H^s},\quad \forall 
  t\ge 0.
\end{equation*}
\end{lemma}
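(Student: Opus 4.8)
The plan is to verify the explicit solution formula first, then read off the three estimates from elementary product/composition bounds in Sobolev spaces. Since \eqref{eq:Ysplit} prescribes $\d_t a^\eps = 0$ with $a^\eps_{\mid t=0}=a_0^\eps$, one has immediately $a^\eps(t)\equiv a_0^\eps$, hence $|a^\eps(t,x)|^2=|a_0^\eps(x)|^2$ does not depend on $t$; plugging this into the first equation $\d_t\phi^\eps+f(|a^\eps|^2)=0$ gives $\phi^\eps(t)=\phi_0^\eps-tf(|a_0^\eps|^2)$ by a one-line integration in time. This also shows uniqueness in the stated class. The identity $\|a^\eps(t)\|_{H^s}=\|a_0^\eps\|_{H^s}$ is then trivial.

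For the gradient estimate, I would differentiate the phase formula in space: $\nabla\phi^\eps(t)=\nabla\phi_0^\eps - t\,\nabla\bigl(f(|a_0^\eps|^2)\bigr)$, so by the triangle inequality $\|\nabla\phi^\eps(t)\|_{H^{s+1}}\le\|\nabla\phi_0^\eps\|_{H^{s+1}}+t\,\|\nabla f(|a_0^\eps|^2)\|_{H^{s+1}}$. The second term is controlled by Lemma~\ref{lem:gain}: $\|\nabla f(|a_0^\eps|^2)\|_{H^{s+1}}\le C\bigl(\||a_0^\eps|^2\|_{H^s}+\||a_0^\eps|^2\|_{L^1}\bigr)$. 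Now $\||a_0^\eps|^2\|_{L^1}=\|a_0^\eps\|_{L^2}^2\le\|a_0^\eps\|_{H^s}^2$, and since $s>d/2$, $H^s(\R^d)$ is an algebra, so $\||a_0^\eps|^2\|_{H^s}\le C\|a_0^\eps\|_{H^s}^2$. This already gives a bound quadratic in $\|a_0^\eps\|_{H^s}$; to get the stated bound linear in $\|a_0^\eps\|_{H^s}$ with a constant $C=C(\mu)$ depending on $\mu\ge\|a_0^\eps\|_{L^\infty}$, I would instead use the tame (Moser) product estimate $\||a_0^\eps|^2\|_{H^s}\lesssim\|a_0^\eps\|_{L^\infty}\|a_0^\eps\|_{H^s}\le\mu\|a_0^\eps\|_{H^s}$, and likewise $\||a_0^\eps|^2\|_{L^1}=\|a_0^\eps\|_{L^2}^2\le\mu\|a_0^\eps\|_{L^2}\le\mu\|a_0^\eps\|_{H^s}$. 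Absorbing $\mu$ into the constant yields $\|\nabla\phi^\eps(t)\|_{H^{s+1}}\le\|\nabla\phi_0^\eps\|_{H^{s+1}}+Ct\|a_0^\eps\|_{H^s}$.

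For the $L^\infty$ estimate I use the same formula $\phi^\eps(t)=\phi_0^\eps-tf(|a_0^\eps|^2)$, so $\|\phi^\eps(t)\|_{L^\infty}\le\|\phi_0^\eps\|_{L^\infty}+t\,\|f(|a_0^\eps|^2)\|_{L^\infty}$, and the second term is bounded by \eqref{eq:phaseLinfty} (valid since $s>d/2$): $\|f(|a_0^\eps|^2)\|_{L^\infty}\le C\bigl(\||a_0^\eps|^2\|_{H^s}+\||a_0^\eps|^2\|_{L^1}\bigr)\le C\mu\|a_0^\eps\|_{H^s}$ by the same tame estimates as above. Absorbing $\mu$ gives $\|\phi^\eps(t)\|_{L^\infty}\le\|\phi_0^\eps\|_{L^\infty}+Ct\|a_0^\eps\|_{H^s}$.

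There is no real obstacle here: the only mild subtlety is obtaining a \emph{linear} (rather than quadratic) dependence on $\|a_0^\eps\|_{H^s}$, which is exactly why the hypothesis $\|a_0^\eps\|_{L^\infty}\le\mu$ is imposed and why the constant is allowed to depend on $\mu$; this is handled by the tame product estimate in $H^s$ together with $H^s\hookrightarrow L^2$ for the $L^1$-norm of $|a_0^\eps|^2$. Everything else is the exact solution formula plus Lemma~\ref{lem:gain}.
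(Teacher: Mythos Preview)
Your approach is essentially the paper's: derive the explicit formula, then invoke Lemma~\ref{lem:gain} together with the tame product estimate $\|fg\|_{H^s}\lesssim\|f\|_{L^\infty}\|g\|_{H^s}+\|f\|_{H^s}\|g\|_{L^\infty}$ to handle $|a_0^\eps|^2$. The structure and the key ingredients match.

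One small slip to fix: the inequality $\|a_0^\eps\|_{L^2}^2\le\mu\|a_0^\eps\|_{L^2}$ does not follow from the hypothesis $\|a_0^\eps\|_{L^\infty}\le\mu$; you have implicitly used $\|a_0^\eps\|_{L^2}\le\mu$, which is a different assumption. The $L^1$ term coming out of Lemma~\ref{lem:gain}, namely $\||a_0^\eps|^2\|_{L^1}=\|a_0^\eps\|_{L^2}^2$, is genuinely quadratic in the $L^2$-norm and cannot be made linear in $\|a_0^\eps\|_{H^s}$ using only an $L^\infty$ bound. In every application of the lemma in the paper (e.g.\ Corollary~\ref{cor:est-Z}) one actually has $\|a_0^\eps\|_{H^s}\le\mu$, so the issue is harmless in context; but as stated, that step in your argument is not justified. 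The paper's own brief proof is silent on this point as well.
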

\begin{proof}
  Since $s>d/2$, $H^s(\R^d)$ is a Banach algebra embedded into
  $C(\R^d)$, hence the formula for $\phi^\eps$. The estimates are 
  straightforward consequences of Lemma~\ref{lem:gain}, and of the
  tame estimate $\|fg\|_{H^s}\lesssim \|f\|_{L^\infty}\|g\|_{H^s}+
  \|f\|_{H^s}\|g\|_{L^\infty}$. 
\end{proof}
\subsection{The linear operator}
\label{sec:lin}

We now consider \eqref{eq:Xbkw}. The following lemma is a variant of
\cite[Lemma~3.2]{HLR13}. Like in that paper, the key
  aspect of the result is the at-most-geometric-growth of $\bv$, which will be
  crucial in the context of the splitting approach, where this control
  will be used on a time step.
\begin{lemma}\label{lem:burgers}
  Let $s>d/2+1$ and $\mu>0$. There exists $\tau=\tau(\mu)>0$ such that if
  \begin{equation*}
    \|\bv_0\|_{H^s}\le \mu,
  \end{equation*}
then the (multi-dimensional)
  Burgers equation
  \begin{equation}
    \label{eq:burgers}
    \d_t \bv+\bv\cdot \nabla \bv=0;\quad \bv_{\mid t=0}=\bv_0
  \end{equation}
has a unique solution $\bv\in C([0,\tau];H^s)$, which satisfies
\begin{equation*}
  \sup_{t\in [0,\tau]}\|\bv(t)\|_{H^s}\le 2\mu. 
\end{equation*}
\end{lemma}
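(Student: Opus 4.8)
The plan is to prove this as a standard local well-posedness and a priori bound for the multi-dimensional Burgers equation, with attention paid to obtaining the explicit factor $2$ in the $H^s$ bound on a time interval depending only on $\mu$. First I would set up the iteration scheme (or, equivalently, invoke the general quasilinear symmetric hyperbolic theory, e.g.\ \cite{AlGe07}): $\bv\cdot\nabla\bv$ has a symmetric quasilinear structure in $\bv\in\R^d$, so for $\bv_0\in H^s$ with $s>d/2+1$ there is a unique maximal solution $\bv\in C([0,T_{\max});H^s)$, and it suffices to show that the lifespan is bounded below and the $H^s$-norm stays below $2\mu$ on an interval $[0,\tau(\mu)]$.

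The key step is the energy estimate. Applying $\partial^\beta$ for $|\beta|\le s$ to \eqref{eq:burgers}, pairing with $\partial^\beta\bv$ in $L^2$, and using the standard commutator (Kato--Ponce) estimate $\|[\partial^\beta,\bv\cdot\nabla]\bv\|_{L^2}\lesssim \|\nabla\bv\|_{L^\infty}\|\bv\|_{H^s}$ together with the integration-by-parts bound $|\int (\bv\cdot\nabla\partial^\beta\bv)\,\partial^\beta\bv| = \frac12|\int (\DIV\bv)|\partial^\beta\bv|^2|\lesssim \|\nabla\bv\|_{L^\infty}\|\bv\|_{H^s}^2$, and the Sobolev embedding $H^{s-1}\hookrightarrow L^\infty$ (valid since $s-1>d/2$) to control $\|\nabla\bv\|_{L^\infty}\lesssim\|\bv\|_{H^s}$, one obtains
\begin{equation*}
  \frac{d}{dt}\|\bv(t)\|_{H^s}^2 \le C_0\|\bv(t)\|_{H^s}^3,
\end{equation*}
hence $\frac{d}{dt}\|\bv(t)\|_{H^s}\le \tfrac12 C_0\|\bv(t)\|_{H^s}^2$. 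Integrating this Riccati inequality from the bound $\|\bv_0\|_{H^s}\le\mu$ gives $\|\bv(t)\|_{H^s}\le \mu/(1-\tfrac12 C_0\mu t)$, so choosing $\tau=\tau(\mu)=1/(C_0\mu)$ ensures $\|\bv(t)\|_{H^s}\le 2\mu$ for all $t\in[0,\tau]$. This a priori bound, combined with the blow-up criterion of the local theory (the solution can be continued as long as $\|\bv(t)\|_{H^s}$, or even just $\|\nabla\bv(t)\|_{L^\infty}$, stays finite), shows $T_{\max}>\tau$ and yields the claimed estimate; uniqueness in $C([0,\tau];H^s)$ is part of the same standard theory (a Gronwall argument on the difference of two solutions in $L^2$, using $s-1>d/2$).

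The main obstacle is essentially bookkeeping rather than a genuine difficulty: one must carry out the energy estimate so that the constant $C_0$ depends only on $s$ and $d$ (not on $\mu$), so that the Riccati comparison produces a lifespan $\tau$ depending only on $\mu$ and the factor-$2$ growth is clean; this is exactly the ``at-most-geometric-growth'' feature emphasized before the statement, and it is what will later let the bound be applied on a single time step $\Dt$ without accumulating uncontrolled constants. The only point requiring a little care is that the nonlinearity is quasilinear, so the rigorous justification of the energy estimate should be done on a regularized (e.g.\ mollified or viscous) problem and then passed to the limit, which is routine and can be cited from \cite{AlGe07} rather than reproduced; since the present paper needs only the statement, referring to that reference for the local theory and performing the Riccati argument explicitly is the economical route.
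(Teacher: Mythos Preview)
Your proposal is correct and follows essentially the same route as the paper: an $H^s$ energy estimate obtained via the Kato--Ponce commutator inequality plus integration by parts on the symmetric quasilinear term, then Sobolev embedding to close at $\frac{d}{dt}\|\bv\|_{H^s}\le C\|\bv\|_{H^s}^2$, and comparison with the Riccati ODE to get the factor-$2$ bound on $[0,\tau(\mu)]$. The only cosmetic differences are that the paper works with $\Lambda^s=(1-\Delta)^{s/2}$ rather than $\partial^\beta$ (which also covers non-integer $s$ directly) and cites a global-inversion argument from \cite{CaBook} instead of the symmetric-hyperbolic theory in \cite{AlGe07} for local existence.
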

\begin{proof}
    Local existence of a unique $H^s$ solution follows from a global
    inversion theorem (see e.g. \cite{CaBook}), so we focus on the
    energy estimate. We have
  \begin{equation*}
    \frac{1}{2}\frac{d}{dt}\|\bv\|_{H^{s}}^2 = \< \bv,\d_t
    \bv\>_{H^{s}}=\< \Lambda^{s} \bv , \Lambda^{s}\d_t
    \bv\>_{L^2}=- \< \Lambda^{s} \bv , \Lambda^{s}\(\bv\cdot \nabla \bv\)
    \>_{L^2},
  \end{equation*}
where $\Lambda =(1-\Delta)^{1/2}$. Introduce the commutator
\begin{equation*}
  \frac{1}{2}\frac{d}{dt}\|\bv\|_{H^{s}}^2 = - \< \Lambda^{s} \bv ,
  \bv\cdot \nabla \Lambda^{s} \bv
    \>_{L^2} + \< \Lambda^{s} \bv ,\bv\cdot \nabla \Lambda^{s}
    \bv-\Lambda^{s}\(\bv\cdot \nabla \bv\)\>_{L^2}. 
\end{equation*}
By integration by parts, the first term is controlled by
\begin{equation*}
 \left| \< \Lambda^{s} \bv ,
  \bv\cdot \nabla \Lambda^{s} \bv
    \>_{L^2}\right|\le \frac{1}{2}\|\bv\|_{H^{s}}^2 \|\DIV
  \bv\|_{L^\infty} \lesssim \|\bv\|_{H^{s}}^3 ,
\end{equation*}
where we have used Sobolev embedding and the assumption $s>d/2+1$. The
last term is estimated thanks to Kato-Ponce estimate \cite{KaPo88}
\begin{equation}\label{eq:KatoPonce}
  \| \Lambda^{s}(fg)- f\Lambda^{s}g\|_{L^2}\lesssim \|\nabla
  f\|_{L^\infty} \|g\|_{H^{s-1}}+ \|f\|_{H^{s}}\|g\|_{L^\infty},
\end{equation}
with $f=\bv$ and $g =\nabla \bv$:
\begin{align*}
  \left|\< \Lambda^{s} \bv ,\bv\cdot \nabla \Lambda^{s}
    \bv-\Lambda^{s}\(\bv\cdot \nabla \bv\)\>_{L^2}\right| &\le
  \|\bv\|_{H^{s}}\left\| \Lambda^{s}\(\bv\cdot \nabla \bv\)- \bv\cdot
    \nabla \Lambda^{s}\bv \right\|_{L^2} \\
& \lesssim \|\nabla
  \bv\|_{L^\infty}\|\bv\|_{H^{s}}^2\lesssim  \|\bv\|_{H^s}^3 .
\end{align*}
We infer $\frac{d}{dt}\|\bv\|_{H^s}\le C \|\bv\|_{H^s}^2$, and the result
follows by comparing with the ordinary differential equation $\dot
y=Cy^2$. 
\end{proof}
\begin{lemma}\label{lem:condstabburgers}
  Let $s>d/2+1$ and $\mu>0$. If the solution $\bv$ to
  \eqref{eq:burgers} satisfies
  \begin{equation*}
    \|\nabla\bv (t)\|_{L^\infty}\le \mu,\quad 0\le t\le \tau,
  \end{equation*}
 then there exists $c$ independent of $\mu$ and $\tau$ such that
\begin{equation*}
  \sup_{t\in [0,\tau]}\|\bv(t)\|_{H^s}\le e^{c\mu
    t}\|\bv(0)\|_{H^s},\quad 0\le t\le \tau. 
\end{equation*}
\end{lemma}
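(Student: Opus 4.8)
The plan is to redo the $H^s$ energy estimate from the proof of Lemma~\ref{lem:burgers}, but now keeping the hypothesis $\|\nabla \bv(t)\|_{L^\infty}\le \mu$ as the source of the smallness, rather than bounding $\|\nabla\bv\|_{L^\infty}$ by $\|\bv\|_{H^s}$ via Sobolev embedding. This turns the differential inequality from a quadratic one (whose solution blows up in finite time) into a linear one (whose solution grows only exponentially), which is exactly the geometric-growth statement we want. Since we already know from Lemma~\ref{lem:burgers} that a solution $\bv\in C([0,\tau];H^s)$ exists (possibly after shrinking $\tau$), there is nothing to prove about existence here; the content is purely the a priori bound.

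Concretely, first I would write, as before,
\begin{equation*}
  \frac12\frac{d}{dt}\|\bv\|_{H^s}^2 = -\<\Lambda^s\bv,\Lambda^s(\bv\cdot\nabla\bv)\>_{L^2}
  = -\<\Lambda^s\bv,\bv\cdot\nabla\Lambda^s\bv\>_{L^2}
  + \<\Lambda^s\bv,\bv\cdot\nabla\Lambda^s\bv-\Lambda^s(\bv\cdot\nabla\bv)\>_{L^2}.
\end{equation*}
For the first term, integration by parts gives
$|\<\Lambda^s\bv,\bv\cdot\nabla\Lambda^s\bv\>_{L^2}|\le \tfrac12\|\DIV\bv\|_{L^\infty}\|\bv\|_{H^s}^2$,
and now I bound $\|\DIV\bv\|_{L^\infty}\lesssim \|\nabla\bv\|_{L^\infty}\le\mu$ rather than by an $H^s$ norm. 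For the commutator term, apply the Kato--Ponce estimate \eqref{eq:KatoPonce} with $f=\bv$, $g=\nabla\bv$:
\begin{equation*}
  \|\Lambda^s(\bv\cdot\nabla\bv)-\bv\cdot\nabla\Lambda^s\bv\|_{L^2}
  \lesssim \|\nabla\bv\|_{L^\infty}\|\nabla\bv\|_{H^{s-1}}+\|\bv\|_{H^s}\|\nabla\bv\|_{L^\infty}
  \lesssim \mu\,\|\bv\|_{H^s},
\end{equation*}
so that the commutator term is $\lesssim \mu\,\|\bv\|_{H^s}^2$. Collecting, $\tfrac12\frac{d}{dt}\|\bv\|_{H^s}^2\le c\mu\|\bv\|_{H^s}^2$ for some $c$ independent of $\mu$ and $\tau$, whence $\frac{d}{dt}\|\bv\|_{H^s}\le c\mu\|\bv\|_{H^s}$, and Gronwall's lemma yields $\|\bv(t)\|_{H^s}\le e^{c\mu t}\|\bv(0)\|_{H^s}$ on $[0,\tau]$.

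There is no real obstacle here: the estimate is a one-line modification of Lemma~\ref{lem:burgers}'s proof, the only subtlety being to be careful that the constant $c$ coming out of integration by parts and Kato--Ponce genuinely does not depend on $\mu$ or $\tau$ (it depends only on $s$ and $d$), which is what makes the bound usable on a single time step $\tau=\Dt$ in the splitting argument. One should also note for rigor that the differentiation of $\|\bv(t)\|_{H^s}$ is justified because $\bv\in C([0,\tau];H^s)$ is in fact a strong solution with $\d_t\bv\in C([0,\tau];H^{s-1})$, so the pairing $\<\Lambda^s\bv,\Lambda^s\d_t\bv\>_{L^2}$ makes sense and standard energy-method regularization applies.
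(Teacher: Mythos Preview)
Your proof is correct and follows exactly the approach the paper intends: it simply revisits the tame estimates from the proof of Lemma~\ref{lem:burgers}, retaining the factor $\|\nabla\bv\|_{L^\infty}\le\mu$ instead of absorbing it into $\|\bv\|_{H^s}$ via Sobolev embedding, which converts the quadratic differential inequality into a linear one and yields the exponential bound by Gronwall. The paper's own proof is the single sentence ``This lemma is a straightforward consequence of the tame estimates used in the proof of Lemma~\ref{lem:burgers},'' and your write-up is precisely the unpacking of that sentence.
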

\begin{proof}
  This lemma is a straightforward consequence of the tame estimates
  used in the proof of Lemma~\ref{lem:burgers}. 
\end{proof}
\begin{proposition}\label{prop:estimXsplit}
  Let $\sigma>d/2$, $\mu>0$. Suppose that
  $(\nabla \phi_0^\eps,a_0^\eps)\in H^{\sigma+1}\times 
  H^\sigma $, with
  \begin{equation*}
    \|\nabla \phi_0^\eps\|_{H^{\sigma+1}}\le \mu,\quad
    \|a_0^\eps\|_{H^\sigma}\le \mu.  
  \end{equation*}
There exists $\tau=\tau(\mu)$ independent of $\eps$ such that
\eqref{eq:Xbkw} has a unique 
solution, with $(\nabla \phi^\eps,a^\eps)\in
C([0,\tau];H^{\sigma+1}\times H^\sigma)$, and 
\begin{equation*}
  \sup_{t\in [0,\tau]}\|\nabla \phi^\eps(t)\|_{H^{\sigma+1}}\le 2\mu,\quad 
\sup_{t\in [0,\tau]}\|a^\eps(t)\|_{H^\sigma}\le 2\mu. 
\end{equation*}
If in addition $\phi_0^\eps\in L^\infty(\R^d)$, then $\phi^\eps \in
C([0,\tau];L^\infty)$ and 
\begin{equation*}
  \sup_{t\in [0,\tau]}\|\phi^\eps(t)\|_{L^\infty}\le
  \|\phi_0^\eps\|_{L^\infty} + \tau \mu. 
\end{equation*}
\end{proposition}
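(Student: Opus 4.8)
The plan is to decouple the system \eqref{eq:Xbkw} by first solving the eikonal equation for $\phi^\eps$, then treating the equation for $a^\eps$ as a linear transport equation with a skew-symmetric perturbation. The key observation is that $\bv^\eps := \nabla \phi^\eps$ solves the Burgers equation \eqref{eq:burgers} with data $\nabla \phi_0^\eps$, so Lemma~\ref{lem:burgers} applies directly: there exists $\tau = \tau(\mu)$, independent of $\eps$, such that $\bv^\eps \in C([0,\tau];H^{\sigma+1})$ with $\sup_{[0,\tau]}\|\bv^\eps(t)\|_{H^{\sigma+1}} \le 2\mu$. (We apply the lemma at regularity $s = \sigma+1 > d/2+1$.) Having $\bv^\eps$ in hand, $\phi^\eps$ itself is recovered by integrating the first equation of \eqref{eq:Xbkw} in time, $\phi^\eps(t) = \phi_0^\eps - \tfrac12\int_0^t |\nabla\phi^\eps(\tau')|^2\,d\tau'$; since $H^{\sigma+1}$ is a Banach algebra and $\sigma+1 > d/2$, the integrand is bounded in $L^\infty$ by $\tfrac12\|\bv^\eps\|_{H^{\sigma+1}}^2 \le 2\mu^2$ (after shrinking $\tau$ if necessary so that $2\tau\mu^2 \le \tau\mu$, i.e.\ $\tau \le 1/(2\mu)$, or simply absorbing constants), giving $\|\phi^\eps(t)\|_{L^\infty} \le \|\phi_0^\eps\|_{L^\infty} + \tau\mu$ as claimed.

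For $a^\eps$, once $\phi^\eps$ (hence $\bv^\eps = \nabla\phi^\eps$ and $\DIV \bv^\eps = \Delta\phi^\eps$) is known, the second equation of \eqref{eq:Xbkw} is
\[
  \d_t a^\eps + \bv^\eps \cdot \nabla a^\eps + \tfrac12 (\DIV \bv^\eps)\, a^\eps = i\tfrac{\eps}{2}\Delta a^\eps,
\]
a linear equation in $a^\eps$. Existence and uniqueness of a solution $a^\eps \in C([0,\tau];H^\sigma)$ follows from standard linear theory (for fixed $\eps>0$ the term $i\tfrac{\eps}{2}\Delta a^\eps$ is a bounded perturbation at the level of, say, $H^{\sigma+2}$, and one passes to the limit / uses a priori estimates; alternatively one invokes the semilinear symmetric-hyperbolic framework as in the proof of Proposition~\ref{prop:S1}). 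For the $H^\sigma$ energy estimate, I would apply $\Lambda^\sigma$, pair with $\Lambda^\sigma a^\eps$ in $L^2$, and take real parts. The crucial point is that $\RE\<\Lambda^\sigma a^\eps, i\tfrac\eps2 \Lambda^\sigma \Delta a^\eps\>_{L^2} = 0$: the Laplacian term is skew-symmetric and drops out entirely, so the estimate is uniform in $\eps$. The remaining terms are handled exactly as in Lemma~\ref{lem:burgers}: the transport term $\bv^\eps\cdot\nabla a^\eps$ gives, after commuting $\Lambda^\sigma$ past $\bv^\eps$ via the Kato--Ponce estimate \eqref{eq:KatoPonce} and integrating by parts, a contribution $\lesssim \|\nabla \bv^\eps\|_{L^\infty}\|a^\eps\|_{H^\sigma}^2 \lesssim \|\bv^\eps\|_{H^{\sigma+1}}\|a^\eps\|_{H^\sigma}^2$; the zeroth-order term $\tfrac12(\DIV\bv^\eps)a^\eps$ gives $\lesssim \|\DIV\bv^\eps\|_{H^\sigma}\|a^\eps\|_{H^\sigma}^2 \lesssim \|\bv^\eps\|_{H^{\sigma+1}}\|a^\eps\|_{H^\sigma}^2$ by the tame product estimate $\|fg\|_{H^\sigma}\lesssim \|f\|_{L^\infty}\|g\|_{H^\sigma}+\|f\|_{H^\sigma}\|g\|_{L^\infty}$ together with $\sigma>d/2$.

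Combining these, and using the already-established bound $\sup_{[0,\tau]}\|\bv^\eps(t)\|_{H^{\sigma+1}}\le 2\mu$, we obtain
\[
  \frac{d}{dt}\|a^\eps(t)\|_{H^\sigma} \le C\mu\, \|a^\eps(t)\|_{H^\sigma},
\]
with $C$ independent of $\eps$, whence $\|a^\eps(t)\|_{H^\sigma} \le e^{C\mu t}\|a_0^\eps\|_{H^\sigma} \le e^{C\mu t}\mu$. After possibly further shrinking $\tau = \tau(\mu)$ so that $e^{C\mu\tau} \le 2$, this gives $\sup_{[0,\tau]}\|a^\eps(t)\|_{H^\sigma}\le 2\mu$. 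Taking the final $\tau(\mu)$ to be the minimum of the two thresholds (from Lemma~\ref{lem:burgers} and from the $a^\eps$-estimate) yields all the stated bounds simultaneously, uniformly in $\eps\in(0,1]$.

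I do not anticipate a genuine obstacle here: the proposition is essentially a repackaging of Lemma~\ref{lem:burgers} plus a standard skew-symmetric energy estimate, and the only point requiring a little care is ensuring every constant (in particular the time $\tau$) depends only on $\mu$ and not on $\eps$ — which is exactly what the vanishing of the Laplacian contribution in the $L^2$ pairing guarantees.
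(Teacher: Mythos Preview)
Your proposal is correct and follows essentially the same route as the paper: solve Burgers for $\nabla\phi^\eps$ via Lemma~\ref{lem:burgers}, recover $\phi^\eps$ by time integration, then run the $H^\sigma$ energy estimate on $a^\eps$ using skew-symmetry of $i\Delta$, the Kato--Ponce commutator, and tame estimates, concluding with Gronwall. The only noteworthy difference is the existence step for $a^\eps$: where you appeal to ``standard linear theory'', the paper gives a cleaner one-line argument by observing that $a^\eps = v^\eps e^{-i\phi^\eps/\eps}$, with $v^\eps$ the solution to the free Schr\"odinger equation \eqref{eq:linear} with initial datum $a_0^\eps e^{i\phi_0^\eps/\eps}\in H^\sigma$; this immediately yields $a^\eps\in C([0,\tau];H^\sigma)$ once $\phi^\eps$ is known.
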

\begin{proof}
  From
  Lemma~\ref{lem:burgers}, \eqref{eq:burgers} has a unique solution
  $\bv\in C([0,\tau];H^{\sigma+1})$, such that $\bv_{\mid t=0}= \nabla
  \phi_0^\eps$, with $\|\bv(t)\|_{H^{\sigma+1}}\le 2\mu$ for $t\in
  [0,\tau]$. Now let 
  \begin{equation*}
    \phi^\eps(t) = \phi_0^\eps -\frac{1}{2}\int_0^t
    |\bv(\sigma)|^2d\sigma. 
  \end{equation*}
We  note that $\d_t(\bv-\nabla \phi^\eps)=\d_t \bv-\nabla \d_t
\phi^\eps=0$, so $\bv=\nabla \phi^\eps$, and the result concerning
$\phi^\eps$ follows. 
\smallbreak

The existence of a solution $a^\eps$ follows for instance from the
fact that it is given by $a^\eps = v^\eps e^{-i\phi^\eps/\eps}$, where
$v^\eps\in C(\R;H^\sigma)$ is the solution to the linear Schr\"odinger equation
\eqref{eq:linear} with initial datum $a_0^\eps
e^{i\phi_0^\eps/\eps}\in H^\sigma$. So we are left with the energy estimate: since
$i\Delta$ is skew-symmetric,
\begin{align*}
  \frac{1}{2}\frac{d}{dt}\|a^\eps\|_{H^{\sigma}}^2 &=
  \<\Lambda^{\sigma}a^\eps, \(\d_t
  -i\frac{\eps}{2}\Delta\)\Lambda^{\sigma}a^\eps
  \>_{L^2}\\
&=-\<\Lambda^{\sigma}a^\eps, \Lambda^\sigma\(\nabla \phi^\eps\cdot \nabla
  a^\eps+\frac{1}{2}a^\eps\Delta \phi^\eps\)\>_{L^2}. 
\end{align*}
By integration by parts,
\begin{equation*}
  \<\Lambda^{\sigma}a^\eps, \nabla \phi^\eps\cdot \nabla
  \Lambda^\sigma a^\eps+\frac{1}{2}\Lambda^\sigma a^\eps\Delta \phi^\eps\>_{L^2}=0,
\end{equation*}
so we have
\begin{align*}
  \frac{1}{2}\frac{d}{dt}\|a^\eps\|_{H^{\sigma}}^2 &=\<\Lambda^{\sigma}a^\eps,
 \nabla\phi^\eps \cdot \nabla \Lambda^\sigma a^\eps- \Lambda^\sigma\(\nabla
 \phi^\eps\cdot \nabla  
  a^\eps\)\>_{L^2}\\
&\quad +\frac{1}{2}\<\Lambda^{\sigma}a^\eps,\Lambda^\sigma a^\eps\Delta
\phi^\eps-\Lambda^\sigma\(a^\eps\Delta \phi^\eps\)\>_{L^2} .
\end{align*}
Kato-Ponce estimate \eqref{eq:KatoPonce} for the first line, and tame
estimates for the second line then yield
\begin{equation}\label{eq:tamea}
  \begin{aligned}
    \frac{d}{dt}\|a^\eps\|_{H^{\sigma}}^2\lesssim
  \|a^\eps\|_{H^{\sigma}}&\Big(\|a^\eps\|_{H^{\sigma}}\|\nabla^2\phi^\eps\|_{L^\infty}
  + \|\nabla \phi^\eps\|_{H^\si}\|\nabla a\|_{L^\infty}\\
&\quad
  +\|\Delta\phi^\eps\|_{L^\infty} \|a^\eps\|_{H^{\si}}+\|\Delta
  \phi^\eps\|_{H^{\si}}\| a\|_{L^\infty}  \Big)\\
  \lesssim \|\nabla
  \phi^\eps&\|_{H^{\si+1}}\|a^\eps\|_{H^{\sigma}}^2,
  \end{aligned}
\end{equation}
since $\si>d/2$,
and the result follows from Gronwall lemma, by decreasing $\tau$
if necessary.
\end{proof}
\begin{remark}\label{rem:shift}
  The above proof suggests that the shift in regularity, between
  $\phi^\eps$ and $a^\eps$, cannot be avoided. Note that this
  phenomenon shows up when the \emph{free} Schr\"odinger equation
  \eqref{eq:linear} is solved, in terms of WKB states,
  and is not a difficulty due to the nonlinear aspect
    of \eqref{eq:nls}.
\end{remark}
\begin{proposition}\label{prop:condstabX}
   Let $\sigma>d/2$, $\mu>0$. Suppose that
 the solution to \eqref{eq:Xbkw} satisfies
  \begin{equation*}
    \|\nabla \phi^\eps(t)\|_{W^{1,\infty}}\le \mu,\quad
    \|a^\eps(t)\|_{W^{1,\infty}}\le \mu,\quad  
   0\le t\le \tau.
  \end{equation*}
There exists $c$ independent of $\eps$, $\mu$, $\tau$ such that
the solution to \eqref{eq:Xbkw} satisfies
\begin{equation*}
  \|\nabla \phi^\eps(t)\|_{H^{\sigma+1}}+ \|a^\eps(t)\|_{H^\sigma}\le
  e^{c\mu t}\(\|\nabla \phi_0^\eps\|_{H^{\sigma+1}}+
  \|a_0^\eps\|_{H^\sigma}\),\quad 0\le t\le \tau. 
\end{equation*}
\end{proposition}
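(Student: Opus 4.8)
The plan is to re-run the $H^\sigma$/$H^{\sigma+1}$ energy estimates behind Proposition~\ref{prop:estimXsplit} and Lemma~\ref{lem:condstabburgers}, but this time treating the low-order norms $\|\nabla\phi^\eps\|_{W^{1,\infty}}$ and $\|a^\eps\|_{W^{1,\infty}}$ as prescribed quantities bounded by $\mu$, rather than estimating them through the high Sobolev norms. Existence and uniqueness are not at stake here: the hypothesis already presupposes a solution on $[0,\tau]$, and that solution is the one produced by Proposition~\ref{prop:estimXsplit}.

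First I would handle the phase. Setting $\bv=\nabla\phi^\eps$, differentiating the first equation in \eqref{eq:Xbkw} in space gives the Burgers equation $\d_t\bv+\bv\cdot\nabla\bv=0$, and $\|\nabla\bv(t)\|_{L^\infty}=\|\nabla^2\phi^\eps(t)\|_{L^\infty}\le\mu$ by assumption. Hence Lemma~\ref{lem:condstabburgers}, applied with $s=\sigma+1>d/2+1$, yields (through the tame estimates in its proof) the differential inequality $\frac{d}{dt}\|\nabla\phi^\eps(t)\|_{H^{\sigma+1}}\le c\mu\,\|\nabla\phi^\eps(t)\|_{H^{\sigma+1}}$ on $[0,\tau]$, with $c$ absolute.

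Next, for the amplitude, I would repeat the energy computation in the proof of Proposition~\ref{prop:estimXsplit}: since $i\Delta$ is skew-symmetric and the principal part of the transport term drops after integration by parts, one arrives (with $\Lambda=(1-\Delta)^{1/2}$) at
\[
\tfrac12\tfrac{d}{dt}\|a^\eps\|_{H^\sigma}^2=\langle\Lambda^\sigma a^\eps,\nabla\phi^\eps\cdot\nabla\Lambda^\sigma a^\eps-\Lambda^\sigma(\nabla\phi^\eps\cdot\nabla a^\eps)\rangle_{L^2}+\tfrac12\langle\Lambda^\sigma a^\eps,\Lambda^\sigma a^\eps\,\Delta\phi^\eps-\Lambda^\sigma(a^\eps\Delta\phi^\eps)\rangle_{L^2},
\]
and then the Kato--Ponce estimate \eqref{eq:KatoPonce} on the first bracket together with the tame product estimate on the second give
\[
\tfrac{d}{dt}\|a^\eps\|_{H^\sigma}^2\lesssim\|a^\eps\|_{H^\sigma}\Big(\|\nabla^2\phi^\eps\|_{L^\infty}\|a^\eps\|_{H^\sigma}+\|\nabla\phi^\eps\|_{H^\sigma}\|\nabla a^\eps\|_{L^\infty}+\|\Delta\phi^\eps\|_{L^\infty}\|a^\eps\|_{H^\sigma}+\|\Delta\phi^\eps\|_{H^\sigma}\|a^\eps\|_{L^\infty}\Big).
\]
Here I bound each $L^\infty$-type factor by $\mu$ (using in particular $\|\Delta\phi^\eps\|_{L^\infty}\lesssim\|\nabla^2\phi^\eps\|_{L^\infty}\le\mu$ and $\|\nabla a^\eps\|_{L^\infty},\|a^\eps\|_{L^\infty}\le\mu$), while $\|\nabla\phi^\eps\|_{H^\sigma}\le\|\nabla\phi^\eps\|_{H^{\sigma+1}}$ and $\|\Delta\phi^\eps\|_{H^\sigma}\lesssim\|\nabla\phi^\eps\|_{H^{\sigma+1}}$. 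Dividing by $\|a^\eps\|_{H^\sigma}$ this becomes $\frac{d}{dt}\|a^\eps\|_{H^\sigma}\lesssim\mu\big(\|a^\eps\|_{H^\sigma}+\|\nabla\phi^\eps\|_{H^{\sigma+1}}\big)$. Adding this to the phase inequality for $N(t):=\|\nabla\phi^\eps(t)\|_{H^{\sigma+1}}+\|a^\eps(t)\|_{H^\sigma}$ gives $\frac{d}{dt}N\le c\mu N$ on $[0,\tau]$, and Gronwall's lemma yields $N(t)\le e^{c\mu t}N(0)$, which is the assertion.

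The only delicate point — and essentially the whole content of the statement — is that $c$ must not depend on $\mu$ or $\tau$; this is forced by the semilinear structure of \eqref{eq:Xbkw}, namely that in every commutator and tame estimate above a high Sobolev norm of $\phi^\eps$ or $a^\eps$ appears multiplied only by a $W^{1,\infty}$ coefficient norm, never by another high Sobolev norm, so that the differential inequality is exactly linear in $\mu$ and bilinear in $N$. This is the same at-most-geometric-growth phenomenon already isolated in Lemmas~\ref{lem:burgers} and \ref{lem:condstabburgers}, and it is precisely what makes the estimate usable over a single time step in the splitting argument.
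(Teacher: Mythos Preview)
Your proposal is correct and follows essentially the same route as the paper: invoke Lemma~\ref{lem:condstabburgers} for the phase and re-use the intermediate tame estimate \eqref{eq:tamea} from the proof of Proposition~\ref{prop:estimXsplit} for the amplitude, substituting the assumed $W^{1,\infty}$ bounds for the $L^\infty$-type factors before applying Gronwall. The only cosmetic difference is that the paper treats $\nabla\phi^\eps$ first (obtaining its exponential bound outright from Lemma~\ref{lem:condstabburgers}) and then feeds it into the amplitude inequality, whereas you add the two differential inequalities and Gronwall the sum $N(t)$ at once; both lead to the same conclusion with an absolute constant $c$.
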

\begin{proof}
  Lemma~\ref{lem:condstabburgers} readily implies
\begin{equation*}
  \|\nabla \phi^\eps(t)\|_{H^{\sigma+1}}\le
  e^{c\mu t}\|\nabla \phi_0^\eps\|_{H^{\sigma+1}},\quad 0\le t\le \tau,
\end{equation*}
for some $c$ independent of $\eps$, $\mu$, $\tau$. Back to the proof
of Proposition~\ref{prop:estimXsplit}, simply apply Gronwall lemma to
\eqref{eq:tamea}. 
\end{proof}
\subsection{The splitting operator}
\label{sec:est-split}

In view of Lemma~\ref{lem:est-Y} and
Proposition~\ref{prop:estimXsplit}, we readily have:
\begin{corollary}\label{cor:est-Z}
  Let $s>d/2$, $\mu>0$. Suppose that $(\nabla \phi_0^\eps,a^\eps)\in
  H^{s+1}\times H^s$, with
  \begin{equation*}
    \|\nabla \phi_0^\eps\|_{H^{s+1}}\le \mu,\quad \|a_0^\eps\|_{H^s}\le \mu. 
  \end{equation*}
There exists $\tau=\tau(\mu)>0$ independent of $\eps$ such that
${\mathcal Z}_\eps^t
\begin{pmatrix}
  \phi_0^\eps \\
a_0^\eps
\end{pmatrix} =
\begin{pmatrix}
  \phi_t^\eps\\
a_t^\eps
\end{pmatrix}$, with 
\begin{equation*}
  \sup_{t\in [0,\tau]}\|\nabla \phi^\eps_t\|_{H^{s+1}}\le 4\mu,\quad 
\sup_{t\in [0,\tau]}\|a^\eps_t\|_{H^{s}}\le 4\mu. 
\end{equation*}
If in addition $\phi_0^\eps\in L^\infty$, with
$\|\phi_0^\eps\|_{L^\infty}\le \mu$, then, up to decreasing $\tau$, we
have
\begin{equation*}
  \sup_{t\in [0,\tau]}\|\phi^\eps_t\|_{L^\infty}\le 4\mu. 
\end{equation*}
\end{corollary}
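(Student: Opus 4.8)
The plan is to combine the two component estimates in sequence, tracking how the bounds degrade under composition. Recall that $\mathcal Z_\eps^t = \mathcal Y_\eps^t \mathcal X_\eps^t$, so I apply Proposition~\ref{prop:estimXsplit} first and then Lemma~\ref{lem:est-Y}.

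First I would apply Proposition~\ref{prop:estimXsplit} with $\sigma = s > d/2$: given $\|\nabla\phi_0^\eps\|_{H^{s+1}}\le\mu$ and $\|a_0^\eps\|_{H^s}\le\mu$, there is $\tau_1=\tau_1(\mu)>0$, independent of $\eps$, such that $\mathcal X_\eps^t(\phi_0^\eps,a_0^\eps) = (\widetilde\phi^\eps(t),\widetilde a^\eps(t))$ satisfies $\|\nabla\widetilde\phi^\eps(t)\|_{H^{s+1}}\le 2\mu$ and $\|\widetilde a^\eps(t)\|_{H^s}\le 2\mu$ for $t\in[0,\tau_1]$. Since $s>d/2$, the Sobolev embedding $H^s\hookrightarrow L^\infty$ gives $\|\widetilde a^\eps(t)\|_{L^\infty}\le C_d\|\widetilde a^\eps(t)\|_{H^s}\le 2C_d\mu =: \mu'$, a bound of the form required to feed into Lemma~\ref{lem:est-Y}.

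Next I would apply Lemma~\ref{lem:est-Y} with initial data $(\widetilde\phi^\eps(t),\widetilde a^\eps(t))$, running its flow for the same time $t$. Since the amplitude is unchanged by $\mathcal Y_\eps^t$, the final amplitude $a_t^\eps$ satisfies $\|a_t^\eps\|_{H^s} = \|\widetilde a^\eps(t)\|_{H^s}\le 2\mu < 4\mu$. For the phase gradient, Lemma~\ref{lem:est-Y} gives $\|\nabla\phi_t^\eps\|_{H^{s+1}} \le \|\nabla\widetilde\phi^\eps(t)\|_{H^{s+1}} + C(\mu')\, t\, \|\widetilde a^\eps(t)\|_{H^s} \le 2\mu + 2C(\mu')\mu\, t$; choosing $\tau\le\tau_1$ small enough (depending only on $\mu$) that $2C(\mu')\tau\le 2$, i.e. the second term is $\le 2\mu$, yields $\|\nabla\phi_t^\eps\|_{H^{s+1}}\le 4\mu$ on $[0,\tau]$. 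This is where the factor $4$ in the statement comes from: a factor $2$ from each of the two flows. For the $L^\infty$ bound on the phase itself: Proposition~\ref{prop:estimXsplit} gives $\|\widetilde\phi^\eps(t)\|_{L^\infty}\le\|\phi_0^\eps\|_{L^\infty}+t\mu\le\mu+t\mu$, and then Lemma~\ref{lem:est-Y} adds at most $C(\mu')t\|\widetilde a^\eps(t)\|_{H^s}\le 2C(\mu')\mu t$, so $\|\phi_t^\eps\|_{L^\infty}\le\mu + (1+2C(\mu'))\mu t$; decreasing $\tau$ once more so that $(1+2C(\mu'))\tau\le 3$ gives $\le 4\mu$.

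There is no real obstacle here — this corollary is purely bookkeeping, assembling Lemma~\ref{lem:est-Y} and Proposition~\ref{prop:estimXsplit} in the order dictated by $\mathcal Z_\eps^t = \mathcal Y_\eps^t\mathcal X_\eps^t$. The only point requiring the slightest care is the choice of $\tau=\tau(\mu)$: it must be taken as the minimum of $\tau_1(\mu)$ from Proposition~\ref{prop:estimXsplit} and the finitely many smallness thresholds above, all of which depend on $\mu$ alone (through $\mu'=2C_d\mu$ and the constants $C(\mu')$), and crucially \emph{not} on $\eps$. The uniformity in $\eps$ is inherited directly from the $\eps$-uniformity already established in the two ingredient results.
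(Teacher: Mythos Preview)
Your proof is correct and follows the same bookkeeping strategy as the paper: compose the two ingredient estimates (Lemma~\ref{lem:est-Y} and Proposition~\ref{prop:estimXsplit}) and shrink $\tau$ as needed. One small point of difference worth noting: the paper's own proof applies $\mathcal Y_\eps^t$ to $(\phi_0^\eps,a_0^\eps)$ first and then invokes Proposition~\ref{prop:estimXsplit}, i.e.\ it effectively treats the composition in the order $\mathcal X_\eps^t\mathcal Y_\eps^t$, whereas you follow the stated definition $\mathcal Z_\eps^t=\mathcal Y_\eps^t\mathcal X_\eps^t$ and apply $\mathcal X_\eps^t$ first. Either order yields the claimed bounds (each flow at most doubles the relevant norms on a short interval, and the constants depend only on $\mu$), so this discrepancy is harmless; your ordering is the one consistent with the definition. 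Your explicit use of the Sobolev embedding $H^s\hookrightarrow L^\infty$ to feed into the $L^\infty$ hypothesis of Lemma~\ref{lem:est-Y} is a detail the paper leaves implicit.
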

\begin{proof}
 Lemma~\ref{lem:est-Y}  implies that ${\mathcal Y}_\eps^t\begin{pmatrix}
  \phi_0^\eps \\
a_0^\eps
\end{pmatrix} =
\begin{pmatrix}
  \varphi^\eps_t\\
\alpha_t^\eps
\end{pmatrix}$, with
 \begin{equation*}
   \|\alpha^\eps_t\|_{H^{s}}=\|a_0^\eps\|_{H^s},\quad
   \|\nabla \varphi_t^\eps\|_{H^{s+1}} \le \mu + Ct, \quad t\ge 0. 
 \end{equation*}
We then apply Proposition~\ref{prop:estimXsplit} with $\sigma=
s$. We note that the $L^\infty$ regularity for the phase is propagated by both
operators, and the estimate follows easily.  
\end{proof}
In view of Lemma~\ref{lem:est-Y} and Proposition~\ref{prop:condstabX},
we also infer:
\begin{corollary}\label{cor:condstabZ}
   Let $s>d/2$, $\mu>0$. Suppose that 
${\mathcal Z}_\eps^t
\begin{pmatrix}
  \phi_0^\eps \\
a_0^\eps
\end{pmatrix} =
\begin{pmatrix}
  \phi_t^\eps\\
a_t^\eps
\end{pmatrix}$, with 
\begin{equation*}
  \|\nabla \phi^\eps_t\|_{W^{1,\infty}}\le \mu,\quad 
\|a^\eps_t\|_{W^{1,\infty}}\le \mu,\quad 0\le t\le \tau. 
\end{equation*}
Then there exists $c$ independent of $\eps$, $\mu$, $\tau$, such that
\begin{equation*}
  \|\nabla \phi^\eps_t\|_{H^{s+1}}+
\|a^\eps_t\|_{H^s}\le e^{c\mu t}\(\|\nabla \phi^\eps_0\|_{H^{s+1}}+
\|a^\eps_0\|_{H^s}\),\quad 0\le t\le \tau. 
\end{equation*}
\end{corollary}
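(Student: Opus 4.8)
The plan is to exploit the factorization $\mathcal Z_\eps^t=\mathcal Y_\eps^t\circ\mathcal X_\eps^t$ and to propagate the $H^{s+1}\times H^s$ norm through the two sub-flows, invoking Proposition~\ref{prop:condstabX} for the linear factor and Lemma~\ref{lem:est-Y} for the nonlinear one. This is, step for step, the proof of Corollary~\ref{cor:est-Z}, with the crude short-time bounds (factors $2$ and $4$) replaced by the conditional exponential estimates. Set $(\tilde\phi^\eps(t),\tilde a^\eps(t)):=\mathcal X_\eps^t(\phi_0^\eps,a_0^\eps)$; by the explicit formula for $\mathcal Y_\eps^t$ in Lemma~\ref{lem:est-Y}, the output of $\mathcal Z_\eps^t$ is $(\phi_t^\eps,a_t^\eps)=\bigl(\tilde\phi^\eps(t)-tf(|\tilde a^\eps(t)|^2),\ \tilde a^\eps(t)\bigr)$.

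First I would transfer the $W^{1,\infty}$ hypothesis from $\mathcal Z_\eps^t$ to $\mathcal X_\eps^t$. Since $\mathcal Y_\eps^t$ leaves the amplitude untouched, $\tilde a^\eps(t)=a_t^\eps$, so $\|\tilde a^\eps(t)\|_{W^{1,\infty}}\le\mu$ on $[0,\tau]$ for free; moreover $\|\tilde a^\eps(t)\|_{L^2}=\|a_t^\eps\|_{L^2}=\|a_0^\eps\|_{L^2}$ is conserved along $\mathcal X_\eps^t$ and $\mathcal Y_\eps^t$. For the phase, $\nabla\tilde\phi^\eps(t)=\nabla\phi_t^\eps+t\nabla f(|a_t^\eps|^2)$, whose first term is $\le\mu$ in $W^{1,\infty}$ by assumption, while the second is bounded by $\tau\|\nabla f(|a_t^\eps|^2)\|_{H^{s+1}}$ (embedding $H^{s+1}\hookrightarrow W^{1,\infty}$, valid since $s>d/2$), which Lemma~\ref{lem:gain} controls by $\||a_t^\eps|^2\|_{H^s}+\|a_0^\eps\|_{L^2}^2$. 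Hence the solution of \eqref{eq:Xbkw} issued from $(\phi_0^\eps,a_0^\eps)$ satisfies the hypothesis of Proposition~\ref{prop:condstabX} with some $\mu'$ comparable to $\mu$ (on $[0,\tau]$, possibly after a routine continuity/bootstrap argument using the $H^s$ bound that this very corollary is producing), so that proposition gives
\[
\|\nabla\tilde\phi^\eps(t)\|_{H^{s+1}}+\|\tilde a^\eps(t)\|_{H^s}\le e^{c\mu t}\bigl(\|\nabla\phi_0^\eps\|_{H^{s+1}}+\|a_0^\eps\|_{H^s}\bigr),\qquad 0\le t\le\tau,
\]
with $c$ a universal constant. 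It remains to absorb the $\mathcal Y_\eps^t$ step: the amplitude being preserved, $\|a_t^\eps\|_{H^s}=\|\tilde a^\eps(t)\|_{H^s}$, and by Lemma~\ref{lem:est-Y} together with Lemma~\ref{lem:gain} (in the $W^{1,\infty}$-bounded regime the $H^{s+1}$ norm of $\nabla f(|\tilde a^\eps(t)|^2)$ is dominated by a $\mu$-multiple of $\|\tilde a^\eps(t)\|_{H^s}$, up to the fixed additive $\|a_0^\eps\|_{L^2}^2$) one gets $\|\nabla\phi_t^\eps\|_{H^{s+1}}\le\|\nabla\tilde\phi^\eps(t)\|_{H^{s+1}}+C\mu t\,\|\tilde a^\eps(t)\|_{H^s}$. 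Adding the two estimates,
\[
\|\nabla\phi_t^\eps\|_{H^{s+1}}+\|a_t^\eps\|_{H^s}\le(1+C\mu t)\bigl(\|\nabla\tilde\phi^\eps(t)\|_{H^{s+1}}+\|\tilde a^\eps(t)\|_{H^s}\bigr)\le e^{C\mu t}e^{c\mu t}\bigl(\|\nabla\phi_0^\eps\|_{H^{s+1}}+\|a_0^\eps\|_{H^s}\bigr),
\]
which is the claim with $c$ replaced by the universal constant $c+C$, independent of $\eps$, $\mu$ and $\tau$.

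The one point that is not purely formal is the transfer of the $W^{1,\infty}$ control from the composed flow $\mathcal Z_\eps^t$ to its linear factor $\mathcal X_\eps^t$: for the amplitude it is immediate (it is unchanged by $\mathcal Y_\eps^t$), but for the phase it requires bounding $\nabla f(|a_t^\eps|^2)$ in $W^{1,\infty}$, i.e.\ an $H^s$ (or $L^1$) bound on the amplitude rather than merely a $W^{1,\infty}$ one. This is harmless — the $L^2$ norm of $a_t^\eps$ is conserved, and the $H^s$ norm is exactly what the corollary is propagating, so the gap is closed by a short continuity argument on $[0,\tau]$, or simply by the fact that in every application the corollary is used alongside an a priori $H^s$ bound — but it is the place where some care is needed. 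Everything else is the same bookkeeping as in Corollary~\ref{cor:est-Z}.
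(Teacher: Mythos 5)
Your argument is exactly the one the paper intends: the paper offers no proof beyond the phrase ``in view of Lemma~\ref{lem:est-Y} and Proposition~\ref{prop:condstabX}'', i.e.\ precisely your factorization of $\mathcal Z_\eps^t$ through $\mathcal X_\eps^t$ followed by $\mathcal Y_\eps^t$, with the conditional exponential bound on the Burgers/transport part and the explicit action of the nonlinear part on the phase. The subtlety you single out --- that the $W^{1,\infty}$ hypothesis is placed on the output of $\mathcal Z_\eps^t$ whereas Proposition~\ref{prop:condstabX} needs it along the intermediate $\mathcal X$-flow, so that controlling $t\nabla f(|a_t^\eps|^2)$ in $W^{1,\infty}$ via Lemma~\ref{lem:gain} requires an auxiliary $H^s$ (not merely $W^{1,\infty}$) bound on the amplitude --- is real and is silently elided by the paper; as you observe, it is harmless where the corollary is actually invoked (inside the induction of Proposition~\ref{prop:bound}, where the uniform $H^{s+1}\times H^s$ bound \eqref{eq:s} from the previous step is available), though strictly speaking the constant $c$ then absorbs an $O(\tau)$ correction depending on that a priori bound.
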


\section{Local error estimate}
\label{sec:local}

We recall the result (and resume the notations) from \cite{DeTh13}
concerning the local error 
estimate in the context of \eqref{eq:nls}. For a
possibly nonlinear operator $A$, we denote by $\E_A$ the associated flow:
\begin{equation*}
  \d_t \E_A(t,v)=A\(\E_A(t,v)\);\quad \E_A(0,v)=v.
\end{equation*}
The results presented in this section rely heavily on the following
result. 
\begin{theorem}[Theorem~1 from \cite{DeTh13}]\label{theo:error}
 Suppose that $F(u)=A(u)+B(u)$, and denote by
 \begin{equation*}
   {\mathcal S}^t(u)= \E_F\(t,u\)\text{ and }{\mathcal Z}^t(u)
   =\E_B\(t,\E_A(t,u)\)
 \end{equation*}
the exact flow and the Lie-Trotter flow, respectively. 
 Let $\L (t,u) = {\mathcal Z}^t(u)-{\mathcal S}^t(u)$. We have the exact formula
  \begin{align*}
    \L(t,u) =\int_0^t
    \int_0^{\tau_1}\d_2\E_F& \(t-\tau_1,{\mathcal Z}^{\tau_1}(u)\) \d_2
    \E_B\(\tau_1-\tau_2,\E_A(\tau_1,u)\) \\
&\times [B,A]\(\E_B\(\tau_2,\E_A\(\tau_1,u\)\)\)d\tau_2d\tau_1.
  \end{align*}
\end{theorem}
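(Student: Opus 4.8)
The plan is to obtain this identity by the ``double Duhamel'' computation: the only ingredients are the chain rule and the semigroup property of the three flows $\E_F,\E_A,\E_B$, under the implicit standing assumption that, on the time interval considered, each of these flows depends in a twice continuously Fréchet differentiable way on its initial datum, so that every derivative written below is meaningful.

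First I would telescope along the exact flow. Since $\E_F(0,\cdot)=\mathrm{id}$ and $\mathcal Z^{0}(u)=u$, the curve $\tau_1\mapsto\E_F\(t-\tau_1,\mathcal Z^{\tau_1}(u)\)$ joins $\E_F(t,u)=\mathcal S^{t}(u)$ at $\tau_1=0$ to $\mathcal Z^{t}(u)$ at $\tau_1=t$, so
\begin{equation*}
  \L(t,u)=\int_0^t\frac{d}{d\tau_1}\E_F\(t-\tau_1,\mathcal Z^{\tau_1}(u)\)\,d\tau_1.
\end{equation*}
Differentiating the semigroup identity $\E_F\(s,\E_F(\sigma,v)\)=\E_F(s+\sigma,v)$ in $\sigma$ at $\sigma=0$ gives $\d_2\E_F(s,v)F(v)=F\(\E_F(s,v)\)=\d_1\E_F(s,v)$; with this the $\d_1$-contribution cancels when the chain rule is applied, leaving
\begin{equation*}
  \frac{d}{d\tau_1}\E_F\(t-\tau_1,\mathcal Z^{\tau_1}(u)\)=\d_2\E_F\(t-\tau_1,\mathcal Z^{\tau_1}(u)\)\(\frac{d}{d\tau_1}\mathcal Z^{\tau_1}(u)-F\(\mathcal Z^{\tau_1}(u)\)\).
\end{equation*}
Expanding $\mathcal Z^{\tau_1}(u)=\E_B\(\tau_1,\E_A(\tau_1,u)\)$ by the chain rule and using $\d_1\E_B(s,v)=B\(\E_B(s,v)\)$, $\d_1\E_A(s,v)=A\(\E_A(s,v)\)$, the $B$-terms cancel against $F=A+B$ and there remains the local defect
\begin{equation*}
  \frac{d}{d\tau_1}\mathcal Z^{\tau_1}(u)-F\(\mathcal Z^{\tau_1}(u)\)=\d_2\E_B\(\tau_1,\E_A(\tau_1,u)\)A\(\E_A(\tau_1,u)\)-A\(\mathcal Z^{\tau_1}(u)\).
\end{equation*}

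The heart of the argument is to rewrite this defect --- which vanishes to first order in $\tau_1$ --- as an integral over $\tau_2\in[0,\tau_1]$ carrying the commutator. Fixing $\tau_1$ and writing $w=\E_A(\tau_1,u)$, I would introduce $g(\tau_2)=\d_2\E_B\(\tau_1-\tau_2,\E_B(\tau_2,w)\)A\(\E_B(\tau_2,w)\)$ on $[0,\tau_1]$. Since $\d_2\E_B(0,\cdot)=\mathrm{id}$ and $\E_B(\tau_1,w)=\mathcal Z^{\tau_1}(u)$, one has $g(0)=\d_2\E_B(\tau_1,w)A(w)$ and $g(\tau_1)=A\(\mathcal Z^{\tau_1}(u)\)$, so the defect equals $g(0)-g(\tau_1)=-\int_0^{\tau_1}g'(\tau_2)\,d\tau_2$. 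Computing $g'$ is where the real work lies: one needs the two variational identities for the flow Jacobian $\d_2\E_B$, namely $\d_1\d_2\E_B(s,v)=B'\(\E_B(s,v)\)\d_2\E_B(s,v)$ (differentiate $\d_1\E_B=B\circ\E_B$ in $v$) and $\d_2\d_2\E_B(s,v)\,B(v)=B'\(\E_B(s,v)\)\d_2\E_B(s,v)-\d_2\E_B(s,v)B'(v)$ (differentiate the semigroup identity once in $v$ and once in time). Substituting these into $g'(\tau_2)$, the two terms containing $B'\(\E_B(\cdot)\)$ cancel, and one is left with
\begin{equation*}
  g'(\tau_2)=-\,\d_2\E_B\(\tau_1-\tau_2,\E_B(\tau_2,w)\)\,[B,A]\(\E_B(\tau_2,w)\),\qquad [B,A]:=B'\cdot A-A'\cdot B.
\end{equation*}
Inserting this into the defect and then into the telescoping formula, and recalling $w=\E_A(\tau_1,u)$, produces the claimed double-integral representation of $\L(t,u)$.

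The step I expect to be the genuine obstacle is this last computation of $g'(\tau_2)$: one has to differentiate the \emph{flow Jacobian} $\d_2\E_B$ both in its time slot and in its base point and observe that, after invoking the two variational identities, the whole expression collapses exactly onto the vector-field commutator $[B,A]$; everything else is bookkeeping with the chain rule and the group law. I would also take care, from the start, to record that the regularity and growth requirements under which the theorem is stated are precisely what legitimises differentiation under the integral sign and the manipulations of $\d_2\E_F$ and $\d_2\E_B$ above; without them the representation is only formal.
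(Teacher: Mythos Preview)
The paper does not give its own proof of this theorem: it is quoted verbatim from \cite{DeTh13} and used as a black box, so there is no argument in the paper to compare yours against. That said, your double-Duhamel route is exactly the standard derivation and is correct. The outer telescoping along $\tau_1\mapsto\E_F(t-\tau_1,\mathcal Z^{\tau_1}(u))$, the identification of the local defect, and your computation of $g'(\tau_2)$ via the two variational identities for $\d_2\E_B$ are all right; the collapse onto $[B,A]$ is indeed where the cancellation happens.

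One point deserves attention. The formula your interpolation $g$ actually produces has the inner factor
\[
\d_2\E_B\bigl(\tau_1-\tau_2,\ \E_B(\tau_2,\E_A(\tau_1,u))\bigr),
\]
i.e.\ the linearization of the $B$-flow at the \emph{advanced} base point $\E_B(\tau_2,\E_A(\tau_1,u))$, whereas the statement as transcribed here has the base point $\E_A(\tau_1,u)$. For a genuinely nonlinear $B$ these two linear maps differ, and it is your version that drops out of the computation (and that appears in \cite{DeTh13}). In the present paper this discrepancy is harmless: with $B$ as in \eqref{eq:AB} the $B$-flow leaves the amplitude component unchanged, and by \eqref{eq:d2EB} the operator $\d_2\E_B(t,(\phi,a))$ depends only on $a$, so the two base points give the same map and the estimates in the proof of Theorem~\ref{theo:errlocBKW} go through unchanged. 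If you write up the identity in full generality, keep the base point your computation gives.
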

We emphasize the fact that in \cite{DeTh13}, this result is
established for general operators $A$ and $B$. In particular, both
operators may be nonlinear. 
In the case of \eqref{eq:linear}--\eqref{eq:Y},
\begin{equation*}
  A = i\frac{\eps}{2}\Delta;\quad B(v) = -\frac{i}{\eps}f\(|v|^2\) v;\quad
  F(v)= A(v)+B(v).
\end{equation*}
We have omitted the dependence upon $\eps$ in the notations for the
sake of brevity.
\smallbreak

The linearized flow $\d_2\E_F$ is characterized by $\d_2
\E_F(t,u)w_0=w$, where
\begin{equation*}
  i\eps\d_t w+\frac{\eps^2}{2}\Delta w = f\(|u|^2\) w+f\( \overline u
  w + u\overline w\)u
  ;\quad w_{\mid t=0} =w_0.
\end{equation*}
We note that it is not compatible with our approach, inasmuch as it
\emph{does not preserve the (monokinetic) WKB structure}: if $u=a e^{i\phi/\eps}$
and $w_0 = b_0  e^{i\varphi_0/\eps}$, then the equation becomes
\begin{equation*}
  i\eps\d_t w+\frac{\eps^2}{2}\Delta w = f\(|a|^2\)
  w+f\( \overline a
   e^{-i\phi/\eps}w + ae^{i\phi/\eps}\overline w
  \)a e^{i\phi/\eps}
  ;\quad w_{\mid t=0} =b_0  e^{i\varphi_0/\eps}.
\end{equation*}
In general, this is not compatible with a solution of the form
\begin{equation*}
  w= b^\eps e^{i\varphi^\eps/\eps},
\end{equation*}
with $b^\eps$ and $\varphi^\eps$ uniformly bounded in Sobolev
spaces. Possibly, $w$ should rather be seeked as a superposition of
WKB states,
\begin{equation*}
  w = \sum_j b^\eps_j e^{i\varphi^\eps_j/\eps}.
\end{equation*}
Another, less technical, way to see that the
local error should not be expected to be a single WKB state consists
in going back to the definition. We have seen that the numerical
solution remains of the form (at time $t_n=n\Dt$) $u_n^\eps(x)=a_n^\eps(x)
e^{i\phi_n^\eps(x)/\eps}$, while the exact solution is of the form
(Proposition~\ref{prop:S1}) $u^\eps(t,x)=a^\eps(t,x)
e^{i\phi^\eps(t,x)/\eps}$. Thus the local error is
\begin{equation*}
  \L(t_n,u_0)(x) = a_n^\eps(x)
e^{i\phi_n^\eps(x)/\eps}-a^\eps(t,x)
e^{i\phi^\eps(t,x)/\eps},
\end{equation*}
and it is very unlikely that this can be factored out as
\begin{equation*}
 \L(t_n,u_0)(x)= \alpha_n^\eps(x) e^{i\varphi_n^\eps(x)/\eps}, 
\end{equation*}
with $\alpha_n^\eps$ and $\varphi_n^\eps$ uniformly bounded in Sobolev
spaces (consider for instance the trivial example, 
$\L=\(e^{ix_1/\eps}-1\)e^{-|x|^2}$). 
\smallbreak

This aspect is another motivation for working
with the system \eqref{eq:Xbkw}--\eqref{eq:Ysplit} instead of the standard one 
\eqref{eq:linear}--\eqref{eq:Y}.
We therefore consider the operators $A$ and $B$ defined by
\begin{equation}\label{eq:AB}
  A
  \begin{pmatrix}
    \phi\\
a
  \end{pmatrix}
=
\begin{pmatrix}
  -\frac{1}{2}|\nabla \phi|^2\\
-\nabla \phi\cdot \nabla a -\frac{1}{2}a\Delta \phi
+i\frac{\eps}{2}\Delta a
\end{pmatrix},
\quad B  \begin{pmatrix}
    \phi\\
a
  \end{pmatrix}
=
\begin{pmatrix}
  -f\(|a|^2\)\\
0
\end{pmatrix}.
\end{equation}
We note that with this approach, neither
$A$ nor $B$ is a linear operator. 
\begin{lemma}\label{lem:crochet}
  Let $A$ and $B$ defined by \eqref{eq:AB}. Their commutator is given by
\begin{equation*}
  [A,B]\begin{pmatrix}
    \phi\\
a
  \end{pmatrix} =
  \begin{pmatrix}
    \nabla \phi\cdot \nabla f\(|a|^2\) -\DIV f\(|a|^2\nabla \phi\)
    -\eps \DIV f\(\IM\(\overline a \nabla a\)\)\\
\nabla a\cdot \nabla f\(|a|^2\) +\frac{1}{2}a\Delta f\(|a|^2\)
  \end{pmatrix}.
\end{equation*}
As a consequence, if $s>d/2$, 
$\|\nabla \phi\|_{H^{s+2}}\le M$, $\|a\|_{H^{s+1}}\le M$, then there exists
$C=C(M)$ independent of $\eps\in (0,1]$
such that
\begin{equation*}
  [A,B]\begin{pmatrix}
    \phi\\
a
  \end{pmatrix} =
  \begin{pmatrix}
    \varphi\\
b
  \end{pmatrix}, \quad \text{with }
\left\{
  \begin{aligned}
    \|\varphi\|_{H^{s+2}}&\le C
  \(\|\nabla \phi\|_{H^{s+2}} + \|a\|_{H^{s+1}}\),\\
 \|b\|_{H^{s}}&\le C
  \|a\|_{H^{s+1}}.
  \end{aligned}
\right.
\end{equation*}
In particular,
\begin{equation*}
  \|\varphi\|_{L^\infty} \le C \(\|\nabla \phi\|_{H^{s+2}} +
  \|a\|_{H^{s+1}}\). 
\end{equation*}
\end{lemma}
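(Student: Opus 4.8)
The plan is to compute the commutator explicitly from the definitions \eqref{eq:AB}, and then read off the Sobolev estimates from the structure of the resulting expression, using Lemma~\ref{lem:gain} (the ``gain of two derivatives'' property of $f$) as the essential tool. The two operators act as
\begin{equation*}
  A\begin{pmatrix}\phi\\a\end{pmatrix}=\begin{pmatrix}-\tfrac12|\nabla\phi|^2\\-\nabla\phi\cdot\nabla a-\tfrac12 a\Delta\phi+i\tfrac{\eps}{2}\Delta a\end{pmatrix},\qquad B\begin{pmatrix}\phi\\a\end{pmatrix}=\begin{pmatrix}-f(|a|^2)\\0\end{pmatrix},
\end{equation*}
and since $[A,B]=DA\cdot B-DB\cdot A$ at a given point $(\phi,a)$, I would first compute the two linearizations. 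For $DA$ applied to a perturbation $(\psi,b)$ one gets $(-\nabla\phi\cdot\nabla\psi,\ -\nabla\psi\cdot\nabla a-\nabla\phi\cdot\nabla b-\tfrac12 b\Delta\phi-\tfrac12 a\Delta\psi+i\tfrac{\eps}{2}\Delta b)$; for $DB$ applied to $(\psi,b)$ one gets $(-2f'(\cdot)\,\RE(\bar a b)$-type term in the first slot, $0$ in the second) — more precisely, since $f=K\ast\cdot$ is linear in its argument, $D(f(|a|^2))\cdot b = f(2\RE(\bar a b))=f(\bar a b+a\bar b)$.

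\textbf{Step 1: assemble $DA\cdot B$.} Here $B=(-f(|a|^2),0)$, so I substitute $\psi=-f(|a|^2)$ and $b=0$ into $DA$. The first component becomes $\nabla\phi\cdot\nabla f(|a|^2)$. The second component becomes $\nabla f(|a|^2)\cdot\nabla a+\tfrac12 a\Delta f(|a|^2)$ (the $b$-terms vanish).

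\textbf{Step 2: assemble $DB\cdot A$.} Here the input is $A(\phi,a)=(\,\cdot\,,\,-\nabla\phi\cdot\nabla a-\tfrac12 a\Delta\phi+i\tfrac{\eps}{2}\Delta a\,)$; only the second slot matters for $DB$, which sends $(\psi,b)\mapsto(f(\bar a b+a\bar b),0)$. With $b=-\nabla\phi\cdot\nabla a-\tfrac12 a\Delta\phi+i\tfrac{\eps}{2}\Delta a$ one computes $\bar a b+a\bar b=2\RE(\bar a b)= -2\RE(\bar a\nabla\phi\cdot\nabla a)-|a|^2\Delta\phi-\eps\,\IM(\bar a\Delta a)$. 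Using $2\RE(\bar a\nabla a)=\nabla|a|^2$ and $\IM(\bar a\Delta a)=\DIV\IM(\bar a\nabla a)$, this rearranges (via $\nabla\phi\cdot\nabla|a|^2+|a|^2\Delta\phi=\DIV(|a|^2\nabla\phi)$) into $-\DIV(|a|^2\nabla\phi)-\eps\,\DIV\IM(\bar a\nabla a)$. Applying $f$ and negating gives the first component of $[A,B]$ as $\nabla\phi\cdot\nabla f(|a|^2)-\DIV f(|a|^2\nabla\phi)-\eps\,\DIV f(\IM(\bar a\nabla a))$; the second component of $[A,B]$ is unaffected by $DB\cdot A$ (it has $0$ in slot two), matching the stated formula.

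\textbf{Step 3: the estimates.} For the second component $b=\nabla a\cdot\nabla f(|a|^2)+\tfrac12 a\Delta f(|a|^2)$: by Lemma~\ref{lem:gain}, $\nabla f(|a|^2)\in H^{s+1}$ and $\Delta f(|a|^2)\in H^s$ are controlled by $\||a|^2\|_{H^s}+\||a|^2\|_{L^1}\lesssim\|a\|_{H^s}^2+\|a\|_{L^2}^2\lesssim\|a\|_{H^{s+1}}^2\le M^2$; since $H^s$ is an algebra and $\nabla a\in H^s$, the tame/product estimates give $\|b\|_{H^s}\le C(M)\|a\|_{H^{s+1}}$. For the first component $\varphi$: each of the three terms loses at most one derivative relative to the gains provided by Lemma~\ref{lem:gain}. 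Indeed $f(|a|^2\nabla\phi)$, $f(\IM(\bar a\nabla a))$ and $f(|a|^2)$ all gain two derivatives from their arguments, which lie in $H^{s+1}$, hence these $f(\cdots)$ terms sit in $H^{s+3}$; applying one $\DIV$ or $\nabla$ and multiplying by $\nabla\phi\in H^{s+2}$ (algebra) keeps everything in $H^{s+2}$, with norm $\le C(M)(\|\nabla\phi\|_{H^{s+2}}+\|a\|_{H^{s+1}})$. The $\eps$-term is even better (it carries an explicit $\eps\le1$, harmless). Finally $\|\varphi\|_{L^\infty}\lesssim\|\varphi\|_{H^{s+2}}$ by Sobolev embedding since $s>d/2$. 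The main (and essentially only) subtlety is bookkeeping the derivative count: one must check that writing the second $\DIV f(|a|^2\nabla\phi)$ term does \emph{not} cost a derivative on $a$ or $\phi$ beyond what is available — this is exactly where the ``$f$ gains two derivatives'' of Lemma~\ref{lem:gain} is indispensable, and it is the reason the estimate on $\varphi$ only requires $\nabla\phi\in H^{s+2}$, $a\in H^{s+1}$ rather than one order more.
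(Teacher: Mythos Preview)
Your approach is essentially identical to the paper's: compute the G\^ateaux derivatives $A'$ and $B'$, assemble $[A,B]=A'B-B'A$ using the two key identities $2\RE(\bar a\,\nabla\phi\cdot\nabla a)+|a|^2\Delta\phi=\DIV(|a|^2\nabla\phi)$ and $\IM(\bar a\Delta a)=\DIV\IM(\bar a\nabla a)$ (together with $\partial_j f(\rho)=f(\partial_j\rho)$), then read off the Sobolev bounds from Lemma~\ref{lem:gain} and the algebra property of $H^\sigma$, $\sigma>d/2$. One minor bookkeeping slip: with $a\in H^{s+1}$ you only have $\IM(\bar a\nabla a)\in H^{s}$ (not $H^{s+1}$), so after Lemma~\ref{lem:gain} the $\eps$-term in $\varphi$ lands in $H^{s+1}$ rather than $H^{s+2}$; the paper's proof glosses over this same point and it does not affect how the lemma is subsequently applied.
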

\begin{proof}
  By definition (see \cite[Section~3]{DeTh13}), 
  \begin{equation*}
    [A,B] v= A'(v)B(v)- B'(v)A(v).
  \end{equation*}
We have, since $f$ is linear in its argument,
\begin{align*}
&  A' \begin{pmatrix}
    \phi\\
a
  \end{pmatrix}
\begin{pmatrix}
    \varphi\\
b
  \end{pmatrix}
=
\begin{pmatrix}
   -\nabla \phi\cdot \nabla \varphi\\
-\nabla \phi\cdot \nabla b -\nabla \varphi\cdot \nabla a
-\frac{1}{2}b\Delta \phi-\frac{1}{2}a\Delta
\varphi+i\frac{\eps}{2}\Delta b
  \end{pmatrix},\\
& B' \begin{pmatrix}
    \phi\\
a
  \end{pmatrix}
\begin{pmatrix}
    \varphi\\
b
  \end{pmatrix}
=
\begin{pmatrix}
  -f\(\overline a b+a\overline b\)\\
0
\end{pmatrix}=
\begin{pmatrix}
  -2f\(\RE \(\overline a b\)\)\\
0
\end{pmatrix}.
\end{align*}
We compute
\begin{equation*}
  B' \begin{pmatrix}
    \phi\\
a
  \end{pmatrix}
A \begin{pmatrix}
    \phi\\
a 
  \end{pmatrix}
=
\begin{pmatrix}
 2f\(\RE\(\overline a \nabla \phi\cdot \nabla a \)\)+f \(|a|^2\Delta
 \phi\)+ \eps f\(\IM\(\overline a \Delta a\)\) \\
0
\end{pmatrix}
\end{equation*}
The main point is then to notice the  factorizations
\begin{equation*}
  2 \RE\(\overline a \nabla \phi\cdot \nabla a \) +|a|^2\Delta
 \phi = \DIV\( |a|^2\nabla \phi\),\quad  \IM\(\overline a \Delta a\) =
 \DIV \IM\(\overline a \nabla a\),
\end{equation*}
and to recall $\d_j f(\rho)=f(\d_j \rho)$, $1\le j\le d$. 

The estimates of the lemma then follow from the explicit formula for
$[A,B]$, from the fact that $H^{s+2}(\R^d)$, $H^{s+1}(\R^d)$ and
$H^s(\R^d)$ are Banach algebras, from \eqref{eq:gain}, and from the
embedding $H^{s+2}\hookrightarrow L^\infty$. 
\end{proof}
We have the explicit formula
\begin{equation*}
 {\mathcal Y}_\eps^t\begin{pmatrix}
    \phi\\
a
  \end{pmatrix}=  \E_B\(t, \begin{pmatrix}
    \phi\\
a
  \end{pmatrix}\) 
=
\begin{pmatrix}
  \phi-tf\(|a|^2\)\\
a
\end{pmatrix},
\end{equation*}
and we readily infer
\begin{equation}\label{eq:d2EB}
   \d_2 \E_B\(t, \begin{pmatrix}
    \phi\\
a
  \end{pmatrix}\) \begin{pmatrix}
    \varphi\\
b
  \end{pmatrix}
=
\begin{pmatrix}
  \varphi-  2t\RE f \(\overline a b\)\\
b
\end{pmatrix}.
\end{equation}
Finally, we compute
\begin{equation*}
  \d_2\E_F\(t,\begin{pmatrix}
    \phi\\
a
  \end{pmatrix}
\)\begin{pmatrix}
    \varphi_0\\
b_0
  \end{pmatrix}
=
\begin{pmatrix}
  \varphi(t)\\
b(t)
\end{pmatrix},\quad \text{where}
\end{equation*}
\begin{equation}\label{eq:linearise}
  \left\{
    \begin{aligned}
      & \d_t \varphi + \nabla \phi\cdot \nabla \varphi +
      2\RE f\(\overline a b\) =0;\quad \varphi_{\mid t=0}=\varphi_0,\\
& \d_t b +\nabla \phi\cdot \nabla b+\nabla \varphi\cdot \nabla a +
\frac{1}{2}\( b\Delta \phi+a\Delta \varphi\)=i\frac{\eps}{2}\Delta
b;\quad b_{\mid t=0}=b_0. 
    \end{aligned}
\right.
\end{equation}
\begin{lemma}\label{lem:flotlinearise}
  Let $s>d/2$. Assume that $(\nabla \phi,a)\in
  L^1(I;H^{s+2}\times H^{s+1})$, where $0\in I$.   There
  exists $C$ independent of $\eps\in (0,1]$ 
  such that if $(\nabla \varphi_0, b_0)\in H^{s+1}\times H^s$, the
  solution to \eqref{eq:linearise} satisfies 
  for all $t\in I$,
  \begin{equation*}
    \|b(t)\|_{H^s}+\|\nabla \varphi(t)\|_{H^{s+1}} \le \(
    \|b_0\|_{H^s}+\|\nabla \varphi_0\|_{H^{s+1}} \) e^{C\int_0^t\(
      \|a(\tau)\|_{H^{s+1}} + 
      \|\nabla \phi(\tau)\|_{H^{s+2}}\)d\tau}.
  \end{equation*}
If in addition
$\varphi_0\in L^\infty$, then
\begin{equation*}
  \|\varphi(t)\|_{L^\infty} \le \( \|\varphi_0\|_{L^\infty}
  +\|b_0\|_{H^s}+\|\nabla \varphi_0\|_{H^{s+1}} \) e^{C\int_0^t\(
      \|a(\tau)\|_{H^{s+1}} + 
      \|\nabla \phi(\tau)\|_{H^{s+2}}\)d\tau}.
\end{equation*}
\end{lemma}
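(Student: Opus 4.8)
The plan is to run standard $H^s$ energy estimates on the symmetric hyperbolic system \eqref{eq:linearise}, exploiting exactly the same structural features that made Proposition~\ref{prop:estimXsplit} work: the skew-symmetry of $i\frac{\eps}{2}\Delta$, the shift in regularity between $\nabla\varphi\in H^{s+1}$ and $b\in H^s$, and the fact that the coupling terms $\RE f(\overline a b)$ and $a\Delta\varphi$ are \emph{semilinear} (tame) perturbations thanks to Lemma~\ref{lem:gain}. First I would differentiate the $\varphi$-equation in space to get an equation for $\bv:=\nabla\varphi$, of the form $\d_t\bv+\nabla\phi\cdot\nabla\bv+(\text{lower order in }\bv)+2\nabla\RE f(\overline a b)=0$; by \eqref{eq:gain} the forcing term obeys $\|\nabla\RE f(\overline a b)\|_{H^{s+1}}\lesssim \|\overline a b\|_{H^s}\lesssim \|a\|_{H^s}\|b\|_{H^s}$ since $H^s$ is a Banach algebra for $s>d/2$. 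Then I apply $\Lambda^{s+1}$ to the $\bv$-equation, pair with $\Lambda^{s+1}\bv$ in $L^2$, integrate the transport term $\nabla\phi\cdot\nabla\bv$ by parts (producing $\|\DIV\nabla\phi\|_{L^\infty}\lesssim\|\nabla\phi\|_{H^{s+2}}$ times $\|\bv\|_{H^{s+1}}^2$), and control the commutator $[\Lambda^{s+1},\nabla\phi\cdot\nabla]\bv$ by Kato-Ponce \eqref{eq:KatoPonce}, again bounded by $\|\nabla\phi\|_{H^{s+2}}\|\bv\|_{H^{s+1}}^2$.

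Next, for $b$, I mimic the computation leading to \eqref{eq:tamea}: since $i\Delta$ is skew-symmetric it drops out of the $\frac{1}{2}\frac{d}{dt}\|b\|_{H^s}^2$ identity, the transport term $\nabla\phi\cdot\nabla b$ is handled by integration by parts plus a Kato-Ponce commutator, the term $\frac12 b\Delta\phi$ by a tame estimate, all contributing $\lesssim\|\nabla\phi\|_{H^{s+1}}\|b\|_{H^s}^2$; the genuinely new terms $\nabla\varphi\cdot\nabla a$ and $\frac12 a\Delta\varphi$ are estimated tamely by $\lesssim\|a\|_{H^{s+1}}\|\nabla\varphi\|_{H^{s+1}}\|b\|_{H^s}$. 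Adding the $\bv$- and $b$-estimates and setting $Y(t):=\|b(t)\|_{H^s}+\|\nabla\varphi(t)\|_{H^{s+1}}$ yields a differential inequality $\frac{d}{dt}Y\le C\bigl(\|a(t)\|_{H^{s+1}}+\|\nabla\phi(t)\|_{H^{s+2}}\bigr)Y$, and Gronwall's lemma gives the first claimed bound. For the $L^\infty$ bound on $\varphi$ itself, I go back to the first equation in \eqref{eq:linearise}: $\d_t\varphi=-\nabla\phi\cdot\nabla\varphi-2\RE f(\overline a b)$, so along the characteristics of the field $\nabla\phi$ one has $\frac{d}{dt}\|\varphi\|_{L^\infty}\le \|2\RE f(\overline a b)\|_{L^\infty}\lesssim \|a\|_{H^s}\|b\|_{H^s}$ by \eqref{eq:phaseLinfty} (or just $H^{s}\hookrightarrow L^\infty$ applied to $f(\overline a b)$); integrating in time and inserting the already-established bound on $Y$ gives the second inequality, after absorbing constants into the exponential as the paper's conventions allow.

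The main obstacle is purely bookkeeping: one must be careful that all terms are controlled by the \emph{stated} norms $\|a\|_{H^{s+1}}$ and $\|\nabla\phi\|_{H^{s+2}}$ of the background $(\phi,a)$ and not by one derivative more — this is where the semilinear (rather than quasilinear) nature of the $f$-terms, guaranteed by Lemma~\ref{lem:gain}, and the one-derivative gap between the $\varphi$- and $a$-components are both essential. There is no genuine PDE difficulty here (existence of the solution to the linear system \eqref{eq:linearise} on $I$ follows from standard linear hyperbolic theory, or by freezing coefficients and iterating), so I would state that briefly and devote the write-up to the energy inequality and the Gronwall step.
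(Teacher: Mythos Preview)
Your proposal is correct and follows essentially the same route as the paper: set $w=\nabla\varphi$, run the $H^{s+1}\times H^s$ energy estimate on the resulting linear system using skew-symmetry of $i\Delta$, Kato--Ponce commutators, and Lemma~\ref{lem:gain} for the $f$-terms, then apply Gronwall; the $L^\infty$ bound on $\varphi$ is obtained from the transport equation for $\varphi$ together with \eqref{eq:phaseLinfty}. The paper's proof is more terse (it states the energy inequality and cites Lemma~\ref{lem:gain} without spelling out the commutator and integration-by-parts steps), but the ingredients and structure are the same.
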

\begin{proof}
Set $w=\nabla \varphi$: \eqref{eq:linearise} implies
\begin{equation}\label{eq:linearisev}
  \left\{
    \begin{aligned}
      & \d_t w + \nabla \phi\cdot \nabla w +\nabla^2 \phi\cdot w
      +2\RE\nabla f \(\overline a b\) =0;\quad
      w_{\mid t=0}=\nabla\varphi_0,\\ 
& \d_t b +\nabla \phi\cdot \nabla b+w\cdot \nabla a +
\frac{1}{2}\( b\Delta \phi+a\DIV w\)=i\frac{\eps}{2}\Delta
b;\quad b_{\mid t=0}=b_0. 
    \end{aligned}
\right.
\end{equation}
As in the proof of Proposition~\ref{prop:S1}, the term $i\Delta
b$ being skew-symmetric, it does not show up in energy
estimates. Using Lemma~\ref{lem:gain}, we have the estimate
\begin{align*}
  \|w(t)\|_{H^{s+1}} + \|b(t)\|_{H^s} &\le \|w_0\|_{H^{s+1}} +
  \|b_0\|_{H^s}\\
 +C\int_0^t &\( \|\nabla \phi(\tau)\|_{H^{s+2}}
  +\|a(\tau)\|_{H^{s+1}}\) \( \|w(\tau)\|_{H^{s+1}} + \|b(\tau)\|_{H^s} \)d\tau,
\end{align*}
and the first estimate of the lemma stems from Gronwall lemma. 

The second estimate then follows from the first equation in
\eqref{eq:linearise} (integrated in time), and
\eqref{eq:phaseLinfty}. 
\end{proof}
Putting these estimates together, and using Theorem~\ref{theo:error},
we obtain a result which is crucial in the proof of
Theorem~\ref{theo:main}: 
\begin{theorem}[Local error estimate for WKB
  states]\label{theo:errlocBKW} 
  Let $s>d/2+1$ and $\mu>0$. Suppose that
  \begin{equation*}
    \|\nabla \phi^\eps\|_{H^{s+1}}\le \mu,\quad \|a^\eps\|_{H^s}\le \mu.
  \end{equation*}
There exist $C,c_0>0$ (depending on $\mu$) independent
  of $\eps\in (0,1]$ such that 
  \begin{equation*}
    \L\(t,
    \begin{pmatrix}
      \phi^\eps\\
a^\eps
    \end{pmatrix}\):= \mathcal Z^t_\eps\begin{pmatrix}
      \phi^\eps\\
a^\eps
    \end{pmatrix}-
\mathcal S^t_\eps\begin{pmatrix}
      \phi^\eps\\
a^\eps
    \end{pmatrix}  =
    \begin{pmatrix}
      \Psi^\eps(t)\\
A^\eps(t)
    \end{pmatrix},
  \end{equation*}
where $A^\eps$ and $\Psi^\eps$ satisfy 
\begin{equation*}
  \|\nabla \Psi^\eps(t)\|_{H^{s}}+\|A^\eps(t)\|_{H^{s-1}}\le C
  t^2,\quad 0\le t\le c_0. 
\end{equation*}
If in addition $\|\phi^\eps\|_{L^\infty}\le \mu$, then (up to
increasing $C$)
\begin{equation*}
 \| \Psi^\eps(t)\|_{L^\infty}\le C  t^2,\quad 0\le t\le c_0. 
\end{equation*}
\end{theorem}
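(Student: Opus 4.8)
The strategy is to apply Theorem~\ref{theo:error} with $A,B$ as in \eqref{eq:AB}, and to estimate each of the three factors in the integrand using the preparatory lemmas. Write
\begin{equation*}
  \L\(t,v\) = \int_0^t\int_0^{\tau_1}\d_2\E_F\(t-\tau_1,\mathcal Z^{\tau_1}(v)\)\,\d_2\E_B\(\tau_1-\tau_2,\E_A(\tau_1,v)\)\,[B,A]\(\E_B(\tau_2,\E_A(\tau_1,v))\)\,d\tau_2\,d\tau_1,
\end{equation*}
with $v=(\phi^\eps,a^\eps)^{\mathsf T}$. The plan is: first, control the innermost point $\E_B(\tau_2,\E_A(\tau_1,v))$; since $\tau_1,\tau_2$ range over a small interval $[0,c_0]$, Proposition~\ref{prop:estimXsplit} (applied with $\sigma=s+1$, so that the starting regularity is $\|\nabla\phi^\eps\|_{H^{s+2}}$, $\|a^\eps\|_{H^{s+1}}$ — hence I must \emph{assume one extra derivative} on the data, i.e. quantify $\mu$ in terms of the $H^{s+2}\times H^{s+1}$-norms, which is legitimate since those norms are themselves controlled by $\mu$ through the hypotheses once one tracks constants carefully; alternatively one simply takes $s$ one unit larger internally and states the loss in the final inequality) together with Lemma~\ref{lem:est-Y} shows that this point stays in a ball of $H^{s+2}\times H^{s+1}$ of radius $\lesssim\mu$ for $0\le\tau_2\le\tau_1\le c_0$. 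Second, feed this into Lemma~\ref{lem:crochet}: the commutator $[A,B]$ evaluated there is a pair $(\varphi,b)$ with $\|\nabla\varphi\|_{H^{s+1}}+\|b\|_{H^{s-1}}\lesssim_\mu 1$ (here is where the commutator eats two derivatives off $\phi$ and one off $a$, explaining the final loss to $H^{s-1}$ and $H^s$), and $\|\varphi\|_{L^\infty}\lesssim_\mu 1$.

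Third, propagate this bound through $\d_2\E_B(\tau_1-\tau_2,\cdot)$: by the explicit formula \eqref{eq:d2EB}, this operator adds $-2(\tau_1-\tau_2)\RE f(\overline a b)$ to the phase component and leaves the amplitude component untouched; using \eqref{eq:gain} and \eqref{eq:phaseLinfty} from Lemma~\ref{lem:gain}, together with the fact that $\tau_1-\tau_2\le c_0\le 1$ and that $\E_A(\tau_1,v)$ stays bounded in $H^{s+1}\times H^s$ (again Proposition~\ref{prop:estimXsplit}), the output is still bounded in $H^{s+1}\times H^{s-1}$ (phase/amplitude) with an $L^\infty$ bound on the phase, uniformly in $\eps$. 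Fourth, propagate through the linearized exact flow $\d_2\E_F(t-\tau_1,\cdot)$: this is exactly the flow of \eqref{eq:linearise}, so Lemma~\ref{lem:flotlinearise} applies — with the base point $(\phi,a)=\mathcal Z^{\tau_1}(v)$, which by Corollary~\ref{cor:est-Z} stays bounded in $H^{s+2}\times H^{s+1}$ on $[0,c_0]$ (here again the extra derivative is needed so that $\int_0^{t-\tau_1}(\|a\|_{H^{s+1}}+\|\nabla\phi\|_{H^{s+2}})\,d\tau\lesssim_\mu 1$). Lemma~\ref{lem:flotlinearise} then gives, for the data $(\nabla\varphi_0,b_0)$ in $H^{s}\times H^{s-1}$ coming from the previous step, a bound $\|b(t-\tau_1)\|_{H^{s-1}}+\|\nabla\varphi(t-\tau_1)\|_{H^{s}}\lesssim_\mu 1$, and similarly for the $L^\infty$ norm of $\varphi$. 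Combining, the integrand is bounded in $H^{s}\times H^{s-1}$ (phase-gradient/amplitude) and in $L^\infty$ (phase) by a constant $C=C(\mu)$ uniform in $\eps\in(0,1]$, and the double time integral over $\{0\le\tau_2\le\tau_1\le t\}$ contributes the factor $t^2/2$, yielding $\|\nabla\Psi^\eps(t)\|_{H^s}+\|A^\eps(t)\|_{H^{s-1}}\le Ct^2$ and $\|\Psi^\eps(t)\|_{L^\infty}\le Ct^2$ for $0\le t\le c_0$.

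\textbf{Main obstacle.} The delicate point is the \emph{bookkeeping of derivatives}: the commutator in Lemma~\ref{lem:crochet} is stated for data in $H^{s+2}\times H^{s+1}$ and outputs $H^{s+2}\times H^s$ for the phase/amplitude, i.e. there is a genuine asymmetric loss (two derivatives are consumed from $\phi$ via the $|\nabla\phi|^2$ structure, and the term $\eps\,\DIV f(\IM(\overline a\nabla a))$ — crucially with an $\eps$, not $1/\eps$, prefactor, which is what makes everything uniform — costs one derivative on $a$). One must therefore run the whole chain at regularity $s+1$ (or equivalently reindex), check that Propositions~\ref{prop:estimXsplit}, Corollary~\ref{cor:est-Z} and Lemma~\ref{lem:flotlinearise} are being invoked at the \emph{correct} (higher) Sobolev index so that their uniform-in-$\eps$ bounds are available, and only then drop back down to $H^{s-1}\times H^s$ for the conclusion — matching exactly the regularity loss that reappears in the global statement of Theorem~\ref{theo:main}. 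The other point requiring care, though routine, is that all the flows $\E_A$, $\E_B$, $\E_F$, $\mathcal Z$ preserve the WKB phase/amplitude structure (which is precisely why $A,B$ were chosen as in \eqref{eq:AB} rather than as in \eqref{eq:linear}--\eqref{eq:Y}), so that the formula of Theorem~\ref{theo:error} can be read componentwise and each factor lands in a space on which the previous lemmas act.
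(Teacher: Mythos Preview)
Your overall strategy is exactly that of the paper: apply Theorem~\ref{theo:error} to the operators $A,B$ of \eqref{eq:AB}, bound the inner point via Proposition~\ref{prop:estimXsplit} and Lemma~\ref{lem:est-Y}, apply the commutator estimate of Lemma~\ref{lem:crochet}, push through $\d_2\E_B$ via \eqref{eq:d2EB}, and finally through $\d_2\E_F$ via Lemma~\ref{lem:flotlinearise}, then integrate over the triangle to produce the $t^2$.

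However, there is a genuine gap in your derivative bookkeeping. You write that you must ``assume one extra derivative on the data'' (namely $\nabla\phi^\eps\in H^{s+2}$, $a^\eps\in H^{s+1}$) and claim this is ``legitimate since those norms are themselves controlled by $\mu$ through the hypotheses''. This is false: the hypotheses give only $\|\nabla\phi^\eps\|_{H^{s+1}}\le\mu$ and $\|a^\eps\|_{H^s}\le\mu$, and there is no way to recover an $H^{s+2}\times H^{s+1}$ bound from that. Your alternative (``take $s$ one unit larger internally'') would force $s>d/2+2$, strictly stronger than the stated assumption $s>d/2+1$.

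The point you are missing is that \emph{no extra regularity is needed}: one applies Lemma~\ref{lem:crochet} and Lemma~\ref{lem:flotlinearise} with the index $s$ there replaced by $s-1$. This is admissible precisely because the theorem assumes $s>d/2+1$, so $s-1>d/2$. Concretely: Proposition~\ref{prop:estimXsplit} with $\sigma=s$ and Lemma~\ref{lem:est-Y} keep $\E_B(\tau_2,\E_A(\tau_1,v))$ in $\nabla H^{s+1}\times H^s$; Lemma~\ref{lem:crochet} at index $s-1$ then maps this to $\nabla H^{s}\times H^{s-1}$ (here is the single loss); \eqref{eq:d2EB} preserves this level; Corollary~\ref{cor:est-Z} keeps the base point $\mathcal Z^{\tau_1}(v)$ in $\nabla H^{s+1}\times H^s$; and Lemma~\ref{lem:flotlinearise} at index $s-1$ requires exactly $(\nabla\phi,a)\in L^1(I;H^{s+1}\times H^s)$ for the base and $(\nabla\varphi_0,b_0)\in H^s\times H^{s-1}$ for the data, both of which you have. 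The $L^\infty$ phase estimate follows the same pattern. This is what the paper does, and it matches the hypothesis $s>d/2+1$ on the nose.
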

\begin{proof}
  Let $t\in [0,c]$, and fix $\tau_1,\tau_2$ such that $0\le \tau_2\le
  \tau_1\le t$. Introduce the following intermediary notations:
  \begin{align*}
    &\E_A\(\tau_1,
    \begin{pmatrix}
      \phi^\eps\\
a^\eps
    \end{pmatrix}\)= 
    \begin{pmatrix}
      \phi_1^\eps\\
a_1^\eps
    \end{pmatrix},\\
&\E_B\(\tau_2,
\begin{pmatrix}
      \phi_1^\eps\\
a_1^\eps
    \end{pmatrix}\) = 
   \begin{pmatrix}
      \phi_2^\eps\\
a_2^\eps
    \end{pmatrix},\\
&[B,A] \begin{pmatrix}
      \phi_2^\eps\\
a_2^\eps
    \end{pmatrix}= 
\begin{pmatrix}
      \phi_3^\eps\\
a_3^\eps
    \end{pmatrix},\\
& \d_2 \E_B\(\tau_1-\tau_2, 
\begin{pmatrix}
  \phi_1^\eps\\
a_1^\eps
\end{pmatrix}\)
\begin{pmatrix}
      \phi_3^\eps\\
a_3^\eps
    \end{pmatrix}
=
\begin{pmatrix}
      \phi_4^\eps\\
a_4^\eps
    \end{pmatrix},\\
&\E_B\(\tau_1,
    \begin{pmatrix}
      \phi^\eps\\
a^\eps
    \end{pmatrix}\) = 
    \begin{pmatrix}
      \tilde \phi_1^\eps\\
\tilde a_1^\eps
    \end{pmatrix},\\
&\E_A\(\tau_1,
\begin{pmatrix}
      \tilde \phi_1^\eps\\
\tilde a_1^\eps
    \end{pmatrix}\)=
\begin{pmatrix}
      \tilde \phi_2^\eps\\
\tilde a_2^\eps
    \end{pmatrix}
  \end{align*}
  Then in view of Theorem~\ref{theo:error}, we have
  \begin{equation*}
    \begin{pmatrix}
      \Psi^\eps\\
A^\eps
    \end{pmatrix}=
\int_0^t\int_0^{\tau_1}\d_2\E_F\(t-\tau_1,
\begin{pmatrix}
      \tilde \phi_2^\eps\\
\tilde a_2^\eps
    \end{pmatrix}\)
\begin{pmatrix}
      \phi_4^\eps\\
a_4^\eps
    \end{pmatrix}d\tau_2d\tau_1.
  \end{equation*}
In view of Proposition~\ref{prop:estimXsplit}, we have, uniformly on
$[0,c]$, for $c$
sufficiently small, 
\begin{equation*}
  \|\nabla \phi_1^\eps\|_{H^{s+1}}\le 2\mu,\quad \|a_1^\eps\|_{H^s}\le 2\mu. 
\end{equation*}
Now Lemma~\ref{lem:est-Y} implies (up to decreasing $c$)
\begin{equation*}
  \|\nabla \phi_2^\eps\|_{H^{s+1}}\le 3\mu,\quad \|a_2^\eps\|_{H^s}\le 3\mu. 
\end{equation*}
From Lemma~\ref{lem:crochet}, we infer
\begin{equation*}
  \|\nabla\phi_3^\eps\|_{H^{s}}\le 4\mu,\quad \|a_3^\eps\|_{H^{s-1}}\le 4\mu,
\end{equation*}
provided that $s-1>d/2$. In view of \eqref{eq:d2EB}, we have
\begin{equation*}
  a_4^\eps=a_3^\eps,\quad\phi_4^\eps = \phi_3^\eps
-2(\tau_1-\tau_2) \RE f\(\overline a_1^\eps a_3^\eps\),
\end{equation*}
and therefore
\begin{equation*}
    \|\nabla \phi_4^\eps\|_{H^{s-1}}\le 5\mu,\quad \|a_4^\eps\|_{H^{s-1}}\le 5\mu,
\end{equation*}
since $s-1>d/2$. Now Corollary~\ref{cor:est-Z} implies
\begin{equation*}
   \|\nabla \tilde \phi_2^\eps\|_{H^{s+1}}\le 4\mu,\quad \|\tilde
   a_2^\eps\|_{H^{s}}\le 4\mu.
\end{equation*}
Finally,  Lemma~\ref{lem:flotlinearise} yields, up to
decreasing $c$ one last time,
\begin{equation*}
 \d_2\E_F\(t-\tau_1,
\begin{pmatrix}
      \tilde \phi_2^\eps\\
\tilde a_2^\eps
    \end{pmatrix}\)
\begin{pmatrix}
      \phi_4^\eps\\
a_4^\eps
    \end{pmatrix}=
    \begin{pmatrix}
      \theta^\eps\\
\alpha^\eps
    \end{pmatrix},\text{ with }
    \|\nabla\theta^\eps\|_{H^{s}}\le 10\mu,\quad
    \|\alpha^\eps\|_{H^{s-1}}\le 10\mu. 
\end{equation*}
The first estimate of the theorem then follows by integrating with
respect to $(\tau_1,\tau_2)$ on $\{0\le \tau_2\le  \tau_1\le t\}$. The
$L^\infty$-estimate of the phase follows similarly. 
\end{proof}
Back to the wave functions, we obtain an
estimate similar to the one presented in \cite[Section~4.2.2]{DeTh13}:
\begin{corollary}\label{cor:local-wave}
    Let $s>d/2+1$ and $\mu>0$. Let $\phi_0^\eps\in L^\infty ,a_0^\eps\in H^s$ with
  \begin{equation*}
 \|\phi_0^\eps\|_{L^\infty}\le \mu,\quad
 \|\nabla\phi_0^\eps\|_{H^{s+1}}\le \mu,\quad \|a_0^\eps\|_{H^s}\le
 \mu. 
  \end{equation*}
There exist $C,c_0>0$ (depending on $\mu$) independent
  of $\eps\in (0,1]$ such that 
  \begin{equation*}
    \left\| Z_\eps^t\(a_0^\eps e^{i\phi_t^\eps/\eps}\) -S_\eps^t \(a_0^\eps
      e^{i\phi_0^\eps/\eps}\) \right\|_{L^2}\le
    C\frac{t^2}{\eps}, \quad 0\le t\le c_0.
  \end{equation*}
\end{corollary}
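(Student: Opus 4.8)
The plan is to deduce the bound directly from the phase/amplitude local error estimate of Theorem~\ref{theo:errlocBKW} by reconstructing the wave functions. Set
\[
  \begin{pmatrix} \phi_n^\eps \\ a_n^\eps \end{pmatrix}
  = \mathcal Z_\eps^t \begin{pmatrix} \phi_0^\eps \\ a_0^\eps \end{pmatrix},
  \qquad
  \begin{pmatrix} \phi^\eps(t) \\ a^\eps(t) \end{pmatrix}
  = \mathcal S_\eps^t \begin{pmatrix} \phi_0^\eps \\ a_0^\eps \end{pmatrix}.
\]
The hypotheses $\|\phi_0^\eps\|_{L^\infty}\le\mu$, $\|\nabla\phi_0^\eps\|_{H^{s+1}}\le\mu$, $\|a_0^\eps\|_{H^s}\le\mu$ with $s>d/2+1$ are precisely those of Theorem~\ref{theo:errlocBKW}, which therefore provides $c_0>0$ and $C$, independent of $\eps\in(0,1]$, such that $a_n^\eps-a^\eps(t)=A^\eps(t)$ and $\phi_n^\eps-\phi^\eps(t)=\Psi^\eps(t)$ with
\[
  \|A^\eps(t)\|_{H^{s-1}}\le C t^2,\qquad \|\Psi^\eps(t)\|_{L^\infty}\le C t^2,\qquad 0\le t\le c_0.
\]
On the other hand, the intertwining identity \eqref{eq:num} gives $Z_\eps^t(a_0^\eps e^{i\phi_0^\eps/\eps})=a_n^\eps e^{i\phi_n^\eps/\eps}$, and by construction of the Grenier flow $\mathcal S_\eps^t$ (Section~\ref{sec:decomp}), whose reconstruction $\Pi^\eps$ intertwines it with the exact flow $S_\eps^t$, one has $S_\eps^t(a_0^\eps e^{i\phi_0^\eps/\eps})=a^\eps(t)e^{i\phi^\eps(t)/\eps}$. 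It thus remains only to estimate $\|a_n^\eps e^{i\phi_n^\eps/\eps}-a^\eps(t)e^{i\phi^\eps(t)/\eps}\|_{L^2}$.

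I would split this difference as
\[
  a_n^\eps e^{i\phi_n^\eps/\eps}-a^\eps(t)e^{i\phi^\eps(t)/\eps}
  = A^\eps(t)\, e^{i\phi_n^\eps/\eps}
  + a^\eps(t)\bigl(e^{i\phi_n^\eps/\eps}-e^{i\phi^\eps(t)/\eps}\bigr).
\]
The first term has $L^2$ norm $\|A^\eps(t)\|_{L^2}\le\|A^\eps(t)\|_{H^{s-1}}\le Ct^2\le Ct^2/\eps$ since $\eps\le1$. For the second, the elementary bound $|e^{i\theta_1}-e^{i\theta_2}|\le|\theta_1-\theta_2|$ yields $|e^{i\phi_n^\eps/\eps}-e^{i\phi^\eps(t)/\eps}|\le\eps^{-1}|\Psi^\eps(t)|$, hence
\[
  \bigl\|a^\eps(t)\bigl(e^{i\phi_n^\eps/\eps}-e^{i\phi^\eps(t)/\eps}\bigr)\bigr\|_{L^2}
  \le \frac{1}{\eps}\,\|a^\eps(t)\|_{L^2}\,\|\Psi^\eps(t)\|_{L^\infty}
  \le \frac{C}{\eps}\,t^2,
\]
where $\|a^\eps(t)\|_{L^2}=\|a_0^\eps\|_{L^2}\le\mu$ by conservation of the $L^2$ norm along the nonlinear flow (equivalently, by the uniform bound of Proposition~\ref{prop:S1}). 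Adding the two contributions gives the announced estimate $Ct^2/\eps$, after renaming $C$.

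There is no genuine obstacle at this stage, since the substantive work is already contained in Theorem~\ref{theo:errlocBKW}; the only point requiring care is that the phase error must be controlled in $L^\infty$, not merely in a Sobolev norm, because it enters a complex exponential divided by $\eps$ and invoking a Sobolev-to-$L^\infty$ embedding would cost powers of $\eps$. This is exactly why the hypothesis $\|\phi_0^\eps\|_{L^\infty}\le\mu$ and the $L^\infty$ clause of Theorem~\ref{theo:errlocBKW} are present. Note also that only the $L^2$ (via $H^{s-1}$) control of $A^\eps$ and the $L^\infty$ control of $\Psi^\eps$ are used here; the stronger $H^s$ bound on $\nabla\Psi^\eps$ from that theorem will be needed only later, when this local estimate is iterated through a Lady Windermere's fan argument in the proof of Theorem~\ref{theo:main}.
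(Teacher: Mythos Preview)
Your proof is correct and follows essentially the same route as the paper: reconstruct the wave functions from the phase/amplitude pairs, split the difference as $(a_n^\eps-a^\eps(t))e^{i\phi_n^\eps/\eps}+a^\eps(t)(e^{i\phi_n^\eps/\eps}-e^{i\phi^\eps(t)/\eps})$, and apply the $H^{s-1}$ and $L^\infty$ bounds from Theorem~\ref{theo:errlocBKW} together with $\|a^\eps(t)\|_{L^2}=\|a_0^\eps\|_{L^2}$. One small inaccuracy in your closing remark: the reason a Sobolev-to-$L^\infty$ embedding is not available here is not a loss of powers of $\eps$, but that Theorem~\ref{theo:errlocBKW} controls only $\nabla\Psi^\eps$ in $H^s$, which by itself gives no $L^\infty$ bound on $\Psi^\eps$; this is why the separate $L^\infty$ hypothesis on $\phi_0^\eps$ is needed.
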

\begin{proof}
  We have 
$S_\eps^t u_0^\eps =a^\eps(t)e^{i\phi^\eps(t)/\eps}$ where $a^\eps$
  and $\phi^\eps$ are given by Proposition~\ref{prop:S1}, and
  \begin{equation*}
    a_t^\eps -a^\eps(t)=A^\eps(t),\quad \phi_t^\eps-\phi^\eps(t)=\Psi^\eps(t),
  \end{equation*}
where
  $A^\eps$ and $\Psi^\eps$ are given by Theorem~\ref{theo:errlocBKW}. We
  compute, since
  $\|a^\eps(t)\|_{L^2}=\|u^\eps(t)\|_{L^2}=\|a_0^\eps\|_{L^2}$, 
  \begin{align*}
    \left\| Z_\eps^t\(a_0^\eps e^{i\phi_t^\eps/\eps}\) -S_\eps^t
      u_0^\eps\right\|_{L^2}&=  
\left\| a_t^\eps(t)e^{i\phi_t^\eps/\eps}
  -a^\eps(t)e^{i\phi^\eps(t)/\eps}\right\|_{L^2} \\
&\le \|a_t^\eps
-a^\eps(t)\|_{L^2} +\left\|a^\eps(t)\(e^{i\phi_t^\eps/\eps}-
e^{i\phi^\eps(t)/\eps}\)\right\|_{L^2} \\
&\le \|A^\eps(t)\|_{L^2} + \|a^\eps(t)\|_{L^2}
\left\|\frac{\phi_t^\eps-\phi^\eps(t)}{2\eps}\right\|_{L^\infty} \\
&\le C t^2 + \frac{\mu}{2\eps}\|\Psi^\eps(t)\|_{L^\infty}\lesssim
\frac{t^2}{\eps},
  \end{align*}
where we have used Theorem~\ref{theo:errlocBKW}.
\end{proof}
\begin{corollary}[Local error for quadratic observables]\label{cor:quad-loc}
  Let $s>d/2+1$ and $\mu>0$. Let $\phi^\eps \in L^\infty,a^\eps\in
  H^s$ with 
  \begin{equation*}
   \|\phi^\eps\|_{L^\infty}\le \mu,\quad
   \|\nabla\phi^\eps\|_{H^{s+1}}\le \mu,\quad \|a^\eps\|_{H^s}\le
   \mu. 
  \end{equation*}
There exist $C,c_0>0$ independent
  of $\eps\in (0,1]$ such that for $0\le t\le c_0$, and $u_0^\eps =
  a_0^\eps e^{i\phi_0^\eps/\eps}$, 
 \begin{align*}
  &\left\| \left\lvert Z^t_\eps u_0^\eps \right\rvert^2 - \left\lvert
      S^t_\eps u_0^\eps \right\rvert^2 \right\|_{L^1(\R^d)\cap
    L^\infty(\R^d)}\le Ct^2,\\ 
&\left\| \IM \(\eps \overline{Z^t_\eps u_0^\eps} \nabla
 Z^t_\eps u_0^\eps  \) - \IM
  \(\eps \overline{S^t_\eps u_0^\eps} \nabla S^t_\eps u_0^\eps\)
\right\|_{L^1(\R^d)\cap    L^\infty(\R^d)}\le Ct^2.
  \end{align*} 
\end{corollary}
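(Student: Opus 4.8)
The plan is to reduce the estimates on the quadratic observables to the wave-function estimate already obtained in Corollary~\ref{cor:local-wave}, combined with the uniform $H^s$-bounds on both the exact and numerical phase/amplitude pairs coming from Proposition~\ref{prop:S1}, Corollary~\ref{cor:est-Z}, and Theorem~\ref{theo:errlocBKW}. Write $u^\eps = S^t_\eps u_0^\eps = a^\eps(t)e^{i\phi^\eps(t)/\eps}$ and $u^\eps_{\rm app} = Z^t_\eps u_0^\eps = a^\eps_t e^{i\phi^\eps_t/\eps}$ (the latter being a WKB state by the discussion of Section~\ref{sec:frame}, since $u_0^\eps$ is). By Theorem~\ref{theo:errlocBKW}, both $(\nabla\phi^\eps(t),a^\eps(t))$ and $(\nabla\phi^\eps_t,a^\eps_t)$ are bounded in $H^{s+1}\times H^s$ uniformly in $\eps$ and in $t\in[0,c_0]$ (up to enlarging the constant $\mu$), so in particular $a^\eps(t),a^\eps_t$ and their gradients are bounded in $L^\infty$ uniformly, and $A^\eps(t) := a^\eps_t - a^\eps(t)$ satisfies $\|A^\eps(t)\|_{H^{s-1}}\le Ct^2$ while $\Psi^\eps(t):=\phi^\eps_t-\phi^\eps(t)$ satisfies $\|\nabla\Psi^\eps(t)\|_{H^s}+\|\Psi^\eps(t)\|_{L^\infty}\le Ct^2$.

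First I would treat the position density. We have $|u^\eps_{\rm app}|^2 - |u^\eps|^2 = |a^\eps_t|^2 - |a^\eps(t)|^2 = 2\RE\(\overline{a^\eps(t)}A^\eps(t)\) + |A^\eps(t)|^2$, which no longer involves the fast phase at all. Estimating in $L^\infty$ uses $H^{s-1}\hookrightarrow L^\infty$ (valid since $s-1>d/2$) together with the uniform $L^\infty$-bound on $a^\eps(t)$; estimating in $L^1$ uses that $a^\eps(t)\in L^2$ uniformly (its $L^2$ norm equals $\|a_0^\eps\|_{L^2}$ by $L^2$-conservation for \eqref{eq:linear} and \eqref{eq:Y}) and that $A^\eps(t)\in L^2$ with $\|A^\eps(t)\|_{L^2}\le \|A^\eps(t)\|_{H^{s-1}}\le Ct^2$, plus Cauchy–Schwarz on the cross term and $|A^\eps|^2\in L^1$ with norm $\le Ct^4\le Ct^2$. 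This gives the first bound with an $\eps$-independent constant.

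Next I would treat the current density $J^\eps_{\rm app} - J^\eps = \IM\(\eps\,\overline{u^\eps_{\rm app}}\nabla u^\eps_{\rm app}\) - \IM\(\eps\,\overline{u^\eps}\nabla u^\eps\)$. The useful identity is that for a WKB state $a e^{i\phi/\eps}$ one has $\eps\,\overline{ae^{i\phi/\eps}}\nabla(ae^{i\phi/\eps}) = |a|^2\nabla\phi + \eps\,\overline a\nabla a$, so $J = |a|^2\nabla\phi + \eps\IM(\overline a\nabla a)$; again the fast phase disappears. Hence the difference splits into $\(|a^\eps_t|^2 - |a^\eps(t)|^2\)\nabla\phi^\eps_t + |a^\eps(t)|^2\(\nabla\phi^\eps_t - \nabla\phi^\eps(t)\)$ plus $\eps\IM\(\overline{a^\eps_t}\nabla a^\eps_t - \overline{a^\eps(t)}\nabla a^\eps(t)\)$. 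The first group is handled exactly as for the density, using in addition the uniform bound on $\nabla\phi^\eps_t$ in $H^s\cap L^\infty$ and the bound $\|\nabla\Psi^\eps(t)\|_{H^s}\le Ct^2$ (with $H^s\hookrightarrow L^2\cap L^\infty$); the last group carries an explicit prefactor $\eps$, and writing it as $\eps\IM\(\overline{A^\eps}\nabla a^\eps_t + \overline{a^\eps(t)}\nabla A^\eps\)$ and using the uniform $H^s$ (hence $L^2\cap L^\infty$) bounds on $a^\eps(t),a^\eps_t$ and the $H^{s-1}$ bound $\le Ct^2$ on $A^\eps$ and $\nabla A^\eps$ gives $\le C\eps t^2 \le Ct^2$. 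Collecting the pieces yields the stated $L^1\cap L^\infty$ bound.

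The only mild subtlety — and the step I would be most careful about — is bookkeeping the Sobolev indices: one must check that $s>d/2+1$ is indeed enough, i.e. that all amplitude differences land in a space embedding into $L^\infty$. Since Theorem~\ref{theo:errlocBKW} controls $A^\eps$ in $H^{s-1}$ and we need $s-1>d/2$, this is exactly the hypothesis $s>d/2+1$, so it is tight; everything else is Cauchy–Schwarz, Hölder, the algebra property of $H^s$, and the uniform-in-$\eps$ bounds already established. I would state at the outset that all constants below depend only on $\mu$ and are independent of $\eps\in(0,1]$, invoke Theorem~\ref{theo:errlocBKW} for the phase/amplitude error bounds, and then carry out the two elementary algebraic splittings above.
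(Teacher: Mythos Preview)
Your approach is the same as the paper's: write both the exact and numerical solutions as WKB states so that the fast phase drops out of the observables, then invoke the phase/amplitude error bounds of Theorem~\ref{theo:errlocBKW} together with H\"older and Cauchy--Schwarz. The paper's proof is in fact terser than yours; your algebraic splittings and the use of $H^{s-1}\hookrightarrow L^\infty$ are exactly what is intended.

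One bookkeeping slip: you assert an ``$H^{s-1}$ bound $\le Ct^2$ on $A^\eps$ and $\nabla A^\eps$'', but Theorem~\ref{theo:errlocBKW} only controls $A^\eps$ in $H^{s-1}$, hence $\nabla A^\eps$ only in $H^{s-2}$. This matters for the $L^\infty$ estimate on the current term $\eps\,\overline{a^\eps(t)}\,\nabla A^\eps$: to get $\|\nabla A^\eps\|_{L^\infty}\lesssim t^2$ via Sobolev embedding you need $s-2>d/2$, i.e.\ $s>d/2+2$, not merely $s>d/2+1$. The paper's own proof does not spell this out either, and the global results (Theorem~\ref{theo:main}, Corollary~\ref{cor:CVquad}) assume $s>d/2+2$ anyway, so the slip is harmless in context; just correct the index when you write it up.
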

\begin{proof}
  Resuming the notations from the proof of
  Corollary~\ref{cor:local-wave}, we have
  \begin{equation*}
    \left\lvert Z^t_\eps u_0^\eps \right\rvert^2 - \left\lvert
      S^t_\eps u_0^\eps \right\rvert^2 = |a_t^\eps|^2-|a^\eps(t)|^2,
  \end{equation*}
and the Cauchy-Schwarz inequality yields
\begin{equation*}
 \left\| |a_t^\eps|^2-|a^\eps(t)|^2\right\|_{L^1}\le
 \|a_t^\eps-a^\eps(t)\|_{L^2}\( \|a_t^\eps\|_{L^2}+\|a^\eps(t)\|_{L^2}\).
\end{equation*}
The first part of the corollary then stems from
Theorem~\ref{theo:errlocBKW}.
Similarly,
\begin{align*}
  \IM \(\eps \overline{Z^t_\eps u_0^\eps} \nabla
  Z^t_\eps u_0^\eps\) - \IM
  \(\eps \overline{S^t_\eps u_0^\eps} \nabla S^t_\eps u_0^\eps\)& =
  |a_t^\eps|^2\nabla \phi_t^\eps-|a^\eps(t)|^2 \nabla
  \phi^\eps(t)\\
&\quad +\eps\IM \(\overline{a^\eps_t}\nabla a^\eps_t\) -
  \eps\IM\(\overline{a^\eps(t)}\nabla a^\eps(t)\). 
\end{align*}
The second part of the corollary then follows easily from
H\"older inequality and Theorem~\ref{theo:errlocBKW}.
\end{proof}

\section{End of the proof of Theorem~\ref{theo:main}}
\label{sec:end}

\subsection{Lady Windermere's fan}
\label{sec:numer}

We denote 
\begin{equation*}
  \begin{pmatrix}
    \phi_n^\eps\\
a_n^\eps
  \end{pmatrix} = \({\mathcal Z}_\eps^\Dt\)^n
 \begin{pmatrix}
    \phi_0\\
a_0
  \end{pmatrix}.
\end{equation*}
To prove Theorem~\ref{theo:main}, we rephrase it in a more
precise way:
\begin{proposition}\label{prop:bound}
  Let $s>d/2+2$, $\phi_0\in L^\infty,a_0\in H^s$, with $\nabla
  \phi_0\in H^{s+1}$, and $T$ as in
  Theorem~\ref{theo:main}. There exist $\nu,\gamma,\Dt_0,c_1,C_0>0$
  such that for all $\eps\in (0,1]$, 
  all $0\le \Dt\le \Dt_0$ and all $n\in \N$ such that  $t_n=n\Dt\in [0,T]$, 
  \begin{align}
\label{eq:s-5}    &
\|\nabla\phi_n^\eps\|_{H^{s}}+ \|a_n^\eps\|_{H^{s-1}}\le 
    \nu,\\
\label{eq:err-5}&\|\nabla\phi^\eps_n
-\nabla\phi^\eps(t_n)\|_{H^{s}}+  
\|a^\eps_n -a^\eps(t_n)\|_{H^{s-1}}\le \gamma \Dt,\\
\label{eq:s}& \|\nabla \phi_n^\eps\|_{H^{s+1}}+ \|a_n^\eps\|_{H^{s}}\le
  e^{c_1 \nu n\Dt}\le  C_0=e^{c_1 \nu T},\\
\label{eq:s+}    &\|\phi^\eps_n
-\phi^\eps(t_n)\|_{L^\infty} \le 
    \gamma \Dt.
  \end{align}
\end{proposition}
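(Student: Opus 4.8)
The plan is to prove the four estimates \eqref{eq:s-5}--\eqref{eq:s+} \emph{simultaneously by induction on $n$}, since they feed one another. The point of stating \eqref{eq:s-5} in the weaker space $H^s\times H^{s-1}$ is that this is exactly the regularity at which the local error of Theorem~\ref{theo:errlocBKW} is controlled — the commutator $[A,B]$ of Lemma~\ref{lem:crochet} costs one derivative on the amplitude — and, since $s>d/2+2$, one has $H^{s-1}(\R^d)\hookrightarrow W^{1,\infty}(\R^d)$, so \eqref{eq:s-5} controls $(\nabla\phi_n^\eps,a_n^\eps)$ in $W^{1,\infty}$; while \eqref{eq:s} is the higher regularity needed in order to \emph{apply} the local error estimate at the next step. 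I would first fix $\nu=2M_0$, where $M_0$ is the bound on the exact solution $(\nabla\phi^\eps,a^\eps,\phi^\eps)$ in $C([0,T];H^{s+1}\times H^s\times L^\infty)$ supplied by Proposition~\ref{prop:S1}; this determines $C_0$, then the Lipschitz-type constants $\gamma,c_1$, and finally I would choose $\Dt_0$ small (depending on $\nu$ only) so that every short-time lemma below applies on a single step and so that $\gamma T\Dt_0\le M_0$. The base case $n=0$ is immediate.

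For the inductive step, the core is a Lady Windermere's fan in which — because $\mathcal Z_\eps^t$ is nonlinear, so that \eqref{eq:lady1} is unavailable — the one-step errors are propagated forward by the \emph{exact} flow. Writing $u_k=(\mathcal Z_\eps^\Dt)^k(\phi_0,a_0)$ and $v_j=\mathcal S_\eps^{(n-j)\Dt}u_j$, the semigroup property of $\mathcal S_\eps$ gives $v_n=u_n$, $v_0=(\phi^\eps(t_n),a^\eps(t_n))$, and $u_n-v_0=\sum_{j=1}^{n}(v_j-v_{j-1})$ with $v_j-v_{j-1}=\mathcal S_\eps^{(n-j)\Dt}\mathcal Z_\eps^\Dt u_{j-1}-\mathcal S_\eps^{(n-j)\Dt}\mathcal S_\eps^\Dt u_{j-1}$. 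Using the inductive bound \eqref{eq:s} to bound the input $u_{j-1}$ in $H^{s+1}\times H^s$, and \eqref{eq:s+} to bound its phase in $L^\infty$, Theorem~\ref{theo:errlocBKW} shows that each one-step error $\mathcal Z_\eps^\Dt u_{j-1}-\mathcal S_\eps^\Dt u_{j-1}$ is $O(\Dt^2)$ in $H^s\times H^{s-1}$ and, for the phase, in $L^\infty$. I would then propagate this over time $(n-j)\Dt\le T$ by $\mathcal S_\eps^{(n-j)\Dt}$ via the stability estimate of Proposition~\ref{prop:S2}: both arguments $\mathcal Z_\eps^\Dt u_{j-1}$ and $\mathcal S_\eps^\Dt u_{j-1}$ are bounded in $H^{s+1}\times H^s$ (Proposition~\ref{prop:S1} and Corollary~\ref{cor:est-Z}), and they differ by a quantity that is one derivative less regular, so the stability estimate can be run at regularity $H^s\times H^{s-1}$ (and in $L^\infty$ for the phase) with a constant $K=K(C_0,T)$ — which is legitimate precisely because $s>d/2+2$. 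Summing $\|v_j-v_{j-1}\|\le KC\Dt^2$ over $j$ and using $n\Dt\le T$ yields \eqref{eq:err-5} and \eqref{eq:s+} with $\gamma=KCT$.

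To close the induction I would combine \eqref{eq:err-5} with the exact-solution bound $\|\nabla\phi^\eps(t_n)\|_{H^s}+\|a^\eps(t_n)\|_{H^{s-1}}\le M_0$ of Proposition~\ref{prop:S1}: since $\gamma T\Dt_0\le M_0$, this gives $\|\nabla\phi_n^\eps\|_{H^s}+\|a_n^\eps\|_{H^{s-1}}\le M_0+\gamma\Dt\le 2M_0=\nu$, which is \eqref{eq:s-5}. By Sobolev embedding ($s>d/2+2$) this controls $(\nabla\phi_k^\eps,a_k^\eps)$ in $W^{1,\infty}$ by a multiple of $\nu$ for all $k\le n$; Corollary~\ref{cor:est-Z} then shows the $\mathcal Z_\eps$-orbit stays $W^{1,\infty}$-bounded along each single step of length $\Dt\le\Dt_0$, so Corollary~\ref{cor:condstabZ}, applied on each of the $n$ steps and iterated (note $\mathcal Z_\eps^{n\Dt}\neq(\mathcal Z_\eps^\Dt)^n$, which is why single-step iteration is needed), gives $\|\nabla\phi_n^\eps\|_{H^{s+1}}+\|a_n^\eps\|_{H^s}\le e^{c_1\nu n\Dt}\le C_0$, which is \eqref{eq:s}. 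This completes the induction, hence Proposition~\ref{prop:bound} and thereby Theorem~\ref{theo:main}.

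The main obstacle — and the reason the local error estimate alone is insufficient — is the loss of one derivative in the local error, which forces the entire fan to be carried out at the weakened regularity $H^s\times H^{s-1}$. The argument survives only because the exact flow obeys a stability estimate at that reduced regularity, thanks to the built-in gap between $\nabla\phi^\eps\in H^{s+1}$ and $a^\eps\in H^s$ (Proposition~\ref{prop:S1}) together with $H^{s-1}\hookrightarrow W^{1,\infty}$ — which is exactly where $s>d/2+2$ (rather than merely $s>d/2+1$) is used — and because the two boundedness estimates \eqref{eq:s-5} (uniform, in the weak norm, feeding the $W^{1,\infty}$ control) and \eqref{eq:s} (exponentially growing, in the strong norm, feeding the local error estimate) must be carried in tandem through the induction.
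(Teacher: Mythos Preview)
Your proposal is correct and follows essentially the same approach as the paper: both arguments carry out an induction on $n$ via the same Lady Windermere's fan \eqref{eq:lady2} (propagating local errors by the \emph{exact} flow $\mathcal S_\eps$), invoke Theorem~\ref{theo:errlocBKW} for the $O(\Dt^2)$ local error at the weakened regularity $H^s\times H^{s-1}$, use the stability of $\mathcal S_\eps$ from Proposition~\ref{prop:S2} at that same regularity, and close the higher-norm bound \eqref{eq:s} by feeding the just-established \eqref{eq:s-5} (hence a uniform $W^{1,\infty}$ control, via $s>d/2+2$) into Corollary~\ref{cor:condstabZ}. The only cosmetic difference is the choice of $\nu$: the paper takes $\nu=R+\delta$ with $\delta$ small enough that Euler--Poisson data in the $\nu$-ball stay smooth on $[0,T]$, while you take $\nu=2M_0$; either choice works once $\Dt_0$ is adjusted.
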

\begin{remark}[$L^\infty$ bounds]
  The above result has an important technical consequence:  the
  numerical solution $u_n^\eps= a_n^\eps e^{i\phi_n^\eps/\eps}$ is
  uniformly bounded in $L^\infty(\R^d)$. In view of
  Proposition~\ref{prop:S1}, the same holds for the exact solution
  $u^\eps(t)$. Such informations are very
  delicate to obtain in general. Even in one dimension, the
  Gagliardo-Nirenberg inequality
  \begin{equation*}
    \|u^\eps\|_{L^\infty}\le \sqrt 2\|u^\eps\|_{L^2}^{1/2}\|\d_x u^\eps\|_{L^2}^{1/2}
  \end{equation*}
would not yield better than $\|u^\eps\|_{L^\infty}\lesssim
\eps^{-1/2}$, because of the rapid oscillations present in $u^\eps$
($\phi^\eps\not =0$). 
Here, the uniform $L^\infty$ estimates follow from the fact that
  a WKB regime is considered. 
\end{remark}
\begin{proof}
  The proof that we present follows essentially the lines of
  \cite[Section~5]{HLR13}. 
Denote by $
\begin{pmatrix}
  \phi_k\\
a_k
\end{pmatrix}
 = \({\mathcal Z}_L^{\Dt}\)^k
 \begin{pmatrix}
   \phi_0\\
a_0
 \end{pmatrix}$ the numerical
  solution,  and
  \begin{equation*}
    \begin{pmatrix}
      \phi_n^k\\
a_n^k
    \end{pmatrix}
= {\mathcal S}^{(n-k)\Dt}
 \begin{pmatrix}
      \phi_k\\
a_k
    \end{pmatrix}
.
  \end{equation*}
In this proof, we omit the
dependence of all the functions with respect to $\eps$. 
 From
Proposition~\ref{prop:S1}, there exists $R$ such that 
\begin{equation*}
\left\|
  \begin{pmatrix}
    \phi(t)\\
a(t)
  \end{pmatrix}
\right\|_{\nabla H^{s+1}\times H^s}:=
\|\nabla\phi(t)\|_{H^{s+1}}+\|a(t)\|_{H^s}\le R,\quad \forall t\in [0,T].
\end{equation*}
We prove Proposition~\ref{prop:bound} by induction, with
$\nu=R+\delta$, $\delta>0$ so that the solution to \eqref{eq:euler}
with data in the ball characterized by \eqref{eq:s-5} remains smooth
up to time $T$ (this is possible, since $T<T_{\rm max}$).
The estimates are obviously satisfied for $n=0$. Let $n\ge 1$, and suppose
that the induction assumption is true for $0\le k\le n-1$. 
We introduce the same telescopic series as in \cite{HLR13}, which is
different from \eqref{eq:lady1}, the latter being useful mostly when the
problem (hence the splitting operator) is linear:
\begin{equation}\label{eq:lady2}
  \begin{pmatrix}
    \phi_n^\eps\\
a_n^\eps
  \end{pmatrix}
-
\begin{pmatrix}
  \phi^\eps(t_n)\\
a^\eps(t_n)
\end{pmatrix}=
\sum_{j=0}^{n-1} \(\mathcal S_\eps^{(n-j-1)\Dt}\mathcal Z_\eps^\Dt 
 \begin{pmatrix}
    \phi_j^\eps\\
a_j^\eps
  \end{pmatrix}
  - \mathcal S_\eps^{(n-j-1)\Dt}\mathcal S_\eps^{\Dt}
\begin{pmatrix}
    \phi_j^\eps\\
a_j^\eps
  \end{pmatrix}
\).
\end{equation}
Noting the properties $f_n=f_n^n$ and $f(t_n)= f_n^0$ ($f=\phi$ or
$a$), we estimate 
\begin{align*}
  \|\nabla \phi_{n}-&\nabla\phi(t_{n})\|_{H^{s}}+
  \|a_{n}-a(t_{n})\|_{H^{s-1}}\\
\le
  \sum_{k=0}^{n-1} &
 \(\|\nabla\phi_{n}^{k+1}-\nabla\phi_n^k\|_{H^{s}}+
 \|a_{n}^{k+1}-a_n^k\|_{H^{s-1}}\) 
 \\ 
\le \sum_{k=0}^{n-1}&\left\| {\mathcal S}^{(n-k-1)\Dt}\(\mathcal
  Z^{\Dt}
  \begin{pmatrix}
    \phi_k\\
a_k
  \end{pmatrix}
\)
  - \mathcal S^{(n-k-1)\Dt}\(\mathcal S^{\Dt}
 \begin{pmatrix}
    \phi_k\\
a_k
  \end{pmatrix}
\)\right\|_{\nabla H^{s}\times H^{s-1}}.
\end{align*}
For $k\le n-2$, ${\mathcal Z}_L^{\Dt}
\begin{pmatrix}
  \phi_k\\
a_k
\end{pmatrix}
= 
\begin{pmatrix}
  \phi_{k+1}\\
a_{k+1}
\end{pmatrix}$ and
Proposition~\ref{prop:S2} yields, along with the induction
assumption (all the norms are in $\nabla H^{s}\times H^{s-1}$),
\begin{align*}
\left\| \mathcal S^{\Dt} \begin{pmatrix}
    \phi_k\\
a_k
  \end{pmatrix}\right\| &\le 
\left\| \mathcal S^{\Dt} \begin{pmatrix}
    \phi_k\\
a_k
  \end{pmatrix}
-
\mathcal S^{\Dt} \begin{pmatrix}
    \phi (t_k)\\
a (t_k)
  \end{pmatrix}\right\|
+ \left\| \mathcal S^{\Dt} \begin{pmatrix}
    \phi (t_k)\\
a (t_k)
  \end{pmatrix}\right\| \\
&\le K(2R)\left\| \begin{pmatrix}
    \phi_k\\
a_k
  \end{pmatrix}
-
\begin{pmatrix}
    \phi (t_k)\\
a (t_k)
  \end{pmatrix}\right\|
+\left\|  \begin{pmatrix}
    \phi (t_{k+1})\\
a (t_{k+1})
  \end{pmatrix}\right\|\\
&\le K\gamma \Dt + R,
\end{align*}
which is bounded by $R+\delta$ if $0<\Dt\le \Dt_0\ll 1$. Up to
replacing $K$ with $\max(K,1)$, we obtain that, for $k\le n-1$ and $n\Dt\le T$, 
\begin{equation*}
\left\| \mathcal S^{(n-k-1)\Dt}\(\mathcal Z^{\Dt} 
  \begin{pmatrix}
    \phi_k\\
a_k
  \end{pmatrix}
\) 
-
\mathcal S^{(n-k-1)\Dt}\(\mathcal S^{\Dt} 
  \begin{pmatrix}
    \phi_k\\
a_k
  \end{pmatrix}
\) \right\|_{\nabla H^{s}\times H^{s-1}}
\end{equation*}
is controlled by
\begin{equation*}
K
\left\| \mathcal Z^{\Dt} 
  \begin{pmatrix}
    \phi_k\\
a_k
  \end{pmatrix}
-
\mathcal S^{\Dt} 
  \begin{pmatrix}
    \phi_k\\
a_k
  \end{pmatrix}
\right\|_{\nabla H^{s}\times H^{s-1}}.
\end{equation*}
Using the  local error estimate from Theorem~\ref{theo:errlocBKW}, we
infer, using \eqref{eq:s},
\begin{align*}
\left\| \mathcal S^{(n-k-1)\Dt}\(\mathcal Z^{\Dt} 
  \begin{pmatrix}
    \phi_k\\
a_k
  \end{pmatrix}
\) 
-
\mathcal S^{(n-k-1)\Dt}\(\mathcal S^{\Dt} 
  \begin{pmatrix}
    \phi_k\\
a_k
  \end{pmatrix}
\) \right\|_{H^{s-5}}\le C K\(\Dt\)^2,
\end{align*}
for some uniform constant $C$ depending on $C_0$.  
Therefore,
\begin{align*}
  \|\nabla\phi_{n}-\nabla\phi(t_{n})\|_{H^{s}}+ \|a_{n}-a(t_{n})\|_{H^{s-1}}\le
  CTK\Dt,
\end{align*}
and we can take $\gamma = CTK$,
which is uniform in $n$ and $\Dt$, in order to get \eqref{eq:s-5} and
\eqref{eq:err-5}. Then \eqref{eq:s} follows from
Corollary~\ref{cor:condstabZ}, in view of \eqref{eq:s-5} and Sobolev
embedding, since we have assumed $s>d/2+2$. Finally, the
$L^\infty$-estimates \eqref{eq:s+} for $\phi_n^\eps$ are now
straightforward (up to increasing $\gamma$), and
are left out.  
\end{proof}

\begin{remark}[Nonlinear Schr\"odinger equation]\label{rem:loss}
  We can now explain why Assumption~\ref{hyp:r3} is needed for the
  complete argument to work out. If we wanted to prove the analogue of
  Theorem~\ref{theo:main} for, say, the defocusing cubic Schr\"odinger
  equation 
  \begin{equation*}
    i\eps \d_t u^\eps +\frac{\eps^2}{2}\Delta u^\eps =
 |u^\eps|^2 u^\eps,
  \end{equation*}
then many results would still be available. In terms of the numerical
scheme, the only change would affect the operator ${\mathcal
  Y}^t_\eps$: \eqref{eq:Ysplit} would be replaced by
\begin{equation*}
\left\{ 
 \begin{aligned}
    & \d_t \phi^\eps + |a^\eps|^2=0;\quad \phi^\eps_{\mid
      t=0}=\phi_0^\eps,\\
&\d_t a^\eps=0;\quad a^\eps_{\mid t=0}=a_0^\eps. 
  \end{aligned}
\right.
\end{equation*}
Working in $H^s$ for $s>d/2$, we see that unlike what happens under
Assumption~\ref{hyp:r3}, $\phi^\eps$ cannot be more regular than
$a_0^\eps$. On the other hand, the WKB formulation of the free
Schr\"odinger flow \eqref{eq:Xbkw} induces a shift of regularity: if
$\phi^\eps$ is in $H^s$ for $s$ large, then $a^\eps$ must not be
expected to be more regular than $H^{s-2}$. Therefore, the splitting
operator $\mathcal Z^t_\eps$ induces a loss of regularity, and this
loss is iterated like $T/\Dt$ times. It is this aspect which makes it
hard to adapt Proposition~\ref{prop:bound} to the case of the
nonlinear Schr\"odinger equation. 
\end{remark}

\subsection{Proof of Corollary~\ref{cor:wave}}
\label{sec:cor1}

Once Theorem~\ref{theo:main} is available, we simply write, like in
the proof of Corollary~\ref{cor:local-wave}, 
\begin{align*}
  \(Z_\eps^\Dt\)^n u_0^\eps  -S^{t_n}_\eps   u_0^\eps &= 
a_n^\eps e^{i\phi_n^\eps/\eps}-a^\eps(t_n)e^{i\phi^\eps(t_n)/\eps}\\
&= \(a_n^\eps- a^\eps(t_n)\)e^{i\phi_n^\eps/\eps} +
a^\eps(t_n)\(e^{i\phi_n^\eps/\eps}-  e^{i\phi^\eps(t_n)/\eps}\).
\end{align*}
Taking the $L^2$-norm, we infer
\begin{equation*}
  \left\| \(Z_\eps^\Dt\)^n u_0^\eps   -S^{t_n}_\eps
    u_0^\eps\right\|_{L^2} \le \|a_n^\eps- a^\eps(t_n)\|_{L^2} +
  \|a^\eps(t_n)\|_{L^2} \left\|\frac{\phi_n^\eps-
      \phi^\eps(t_n)}{\eps}\right\|_{L^\infty}, 
\end{equation*}
and Corollary~\ref{cor:wave} is a direct consequence of
Theorem~\ref{theo:main}. 
\subsection{Proof of Corollary~\ref{cor:CVquad}}
\label{sec:cor2}

Corollary~\ref{cor:CVquad} also stems directly from
Theorem~\ref{theo:main}, by resuming the same computations as in the
proof of Corollary~\ref{cor:quad-loc}.

\section{Weakly nonlinear regime}
\label{sec:wnlgo}
 We now consider \eqref{eq:nls} in the case $\alpha\ge 1$, under
 Assumption~\ref{hyp:f} on the nonlinearity. In view of the formal
 computations presented in Section~\ref{sec:wkb}, the case $\alpha=1$
 can be considered as the only interesting one, since no nonlinear
 effect is expected at leading order when $\alpha>1$. Since it is
 possible to treat both cases at once, we take advantage of
 this opportunity. 
\smallbreak

The analysis in the case $\alpha\ge 1$ being quite easier than in the
case $\alpha=0$ (even under Assumption~\ref{hyp:r3}, which is weaker
than Assumption~\ref{hyp:f}), we shall simply underline the
modifications to be made in order to prove
Proposition~\ref{prop:wnlgo} by following the same steps as in the
proof of Theorem~\ref{theo:main}. 
\smallbreak

To characterize the exact flow in terms of WKB states,
\eqref{eq:grenier} is replaced by
\begin{equation}
  \label{eq:grenier-weak}
  \left\{
\begin{aligned}
      &\d_t \phi^\eps+\frac{1}{2}|\nabla \phi^\eps|^2 =0 ;\quad
      \phi^\eps_{\mid t=0} =  
\phi_0,\\
&\d_t a^\eps +\nabla \phi^\eps\cdot \nabla a^\eps
+\frac{1}{2}a^\eps\Delta \phi^\eps = i\frac{\eps}{2}\Delta
a^\eps-i\eps^{\alpha-1}f\(|a^\eps|^2\)a^\eps;\quad a^\eps_{\mid t=0}=a_0. 
    \end{aligned}
\right.
\end{equation}
Thanks to the assumption $\alpha\ge 1$, the last term in the equation
for $a^\eps$ is not singular as $\eps\to 0$. More importantly, this is
no longer a coupled system: the first equation is an eikonal equation,
which we have analyzed in Section~\ref{sec:lin}. 
\smallbreak

In the numerical scheme, the operator $\mathcal X_\eps^t$,
corresponding to the free Schr\"odinger flow, is the same as before,
and analyzed in Section~\ref{sec:lin}. On the other hand the operator
$\mathcal Y_\eps^t$ can be modified, since the nonlinearity does not
affect the rapid oscillations (as can be seen also from
\eqref{eq:grenier-weak}). We recall that we now consider
\begin{equation*}
  \left\{ 
 \begin{aligned}
    & \d_t \phi^\eps =0;\quad \phi^\eps_{\mid
      t=0}=\phi_0^\eps,\\
&\d_t a^\eps=-i\eps^{\alpha-1}f\(|a^\eps|^2\)a^\eps;\quad a^\eps_{\mid
  t=0}=a_0^\eps.  
  \end{aligned}
\right.
\end{equation*}
We see that the possible loss of regularity pointed out in
Remark~\ref{rem:loss} is not present here, since the regularity of
$\phi^\eps$ is not affected by the regularity of $a^\eps$. Also,
working with $a^\eps$ in $H^s$ for $s>d/2$ ensures that the analysis
of Section~\ref{sec:lin} can easily be adapted under
Assumption~\ref{hyp:f}, since $a^\eps(t) =
a_0^\eps\exp\(-i\eps^{\alpha-1}t f(|a_0^\eps|^2)\)$.  
\smallbreak

The main modification in the analysis concerns the local error
estimate, since the statement of Lemma~\ref{lem:crochet} must be
revised. The operator $A$ remains unchanged, and the operator $B$
becomes
\begin{equation*}
  B  \begin{pmatrix}
    \phi\\
a
  \end{pmatrix}
=
\begin{pmatrix}
  0\\
  -i\eps^{\alpha-1}f\(|a|^2\)a
\end{pmatrix}.
\end{equation*}
We compute successively
\begin{equation*}
  B' \begin{pmatrix}
    \phi\\
a
  \end{pmatrix}
\begin{pmatrix}
    \varphi\\
b
  \end{pmatrix}
=
-i\eps^{\alpha-1}\begin{pmatrix}
 0 \\
 2f_1\(\RE \(\overline a
 b\)\)a+f_1\(|a|^2\)b+2
 f'_2\(|a|^2\)\RE \(\overline a b\)
\end{pmatrix},
\end{equation*}
and
\begin{equation*}
  [A,B]\begin{pmatrix}
    \phi\\
a
  \end{pmatrix} =
 i\eps^{\alpha-1} \begin{pmatrix}
   0\\
 F(\phi,a)
  \end{pmatrix},
\end{equation*}
where
\begin{align*}
  F(\phi,a) &= \nabla\phi\cdot \nabla \(f\(|a|^2\)a\)
  +\frac{1}{2}f\(|a|^2\)a\Delta \phi - i\frac{\eps}{2}\Delta
  \(f\(|a|^2\)a\)\\
&\quad -a \DIV f_1\(|a|^2\nabla \phi\) -\eps a \DIV f_1
\(\IM\(\overline a \nabla a\)\) \\
&\quad-f'_2\(|a|^2\) \DIV \(|a|^2\nabla
\phi+\eps \IM\(\overline a \nabla a\)\) .
\end{align*}
The main point to notice is that if $s>d/2+2$, then $F$ maps
$H^{s}\times H^s$ to $H^{s-2}$. Proposition~\ref{prop:wnlgo} then
follows by resuming the same steps as in the proof of
Proposition~\ref{prop:bound}.

\appendix

\section{Linear Schr\"odinger equation}
\label{sec:linear}
Consider the (linear) Schr\"odinger equation with a potential,
\begin{equation}
  \label{eq:linearSch}
  i\eps\d_t u^\eps +\frac{\eps^2}{2}\Delta u^\eps = V u^\eps;\quad
  u^\eps_{\mid t=0}= u_0^\eps,
\end{equation}
with $V=V(t,x)\in \R$. We assume that $V$ grows at
most quadratically in space:
\begin{hyp}\label{hyp:V}
  $V\in L^\infty_{\rm loc}([0,\infty)\times \R^d)$ is real-valued, and
  smooth with respect 
  to the space variable: for (almost) all $t\ge 0$, $x\mapsto V(t,x)$
  is a $C^\infty$ map. Moreover, it is at most quadratic in space: 
  \begin{equation*}
    \forall \alpha \in \N^d, \ |\alpha|\ge 2, \ \forall T>0,\quad
    \d_x^\alpha V\in L^\infty([0,T]\times \R^d). 
  \end{equation*}
In addition, $t\mapsto V(t,0)$ belongs to $L^\infty_{\rm loc}([0,\infty))$.
\end{hyp}
Then for $u_0^\eps \in L^2(\R^d)$,
\eqref{eq:linearSch} has a unique solution $u^\eps\in
C([0,\infty);L^2(\R^d))$, and its $L^2$-norm is conserved,
$\|u^\eps(t)\|_{L^2}=\|u_0^\eps\|_{L^2}$ for all $t\ge 0$; see
e.g. \cite{Fujiwara}. The following result is standard in
semi-classical analysis (see e.g. \cite{Robert}). We sketch the proof for
completeness. 
\begin{proposition}\label{prop:linearbound}
  Let $k\in \N$, $V$ satisfying Assumption~\ref{hyp:V}, and $u_0^\eps\in
  L^2(\R^d)$. Suppose in addition  that $u_0^\eps$ satisfies
  \begin{equation}
    \label{eq:sigmak}
\|u_0^\eps\|_{\Sigma_\eps^k}:=    \sup_{0<\eps\le 1}\(
\|u_0^\eps\|_{L^2} + \lVert \lvert x\rvert^k 
    u_0^\eps\rVert_{L^2} +\lVert \lvert \eps \nabla\rvert^k
    u_0^\eps\rVert_{L^2}\)<\infty.  
  \end{equation}
Then for all $T>0$, the solution to \eqref{eq:linearSch} satisfies
  \begin{equation*}
    \sup_{0<\eps\le 1}\sup_{t\in [0,T]}\( \|u^\eps(t)\|_{L^2} + \lVert
    \lvert x\rvert^k 
    u^\eps(t)\rVert_{L^2} +\lVert \lvert \eps \nabla\rvert^k
    u^\eps(t)\rVert_{L^2}\)<\infty.  
  \end{equation*}
\end{proposition}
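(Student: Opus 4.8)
The plan is to work in the semi-classical Heisenberg picture, which is the standard device for such estimates (see e.g. \cite{Robert}). Denote by $U^\eps(t)$ the propagator of \eqref{eq:linearSch}, which is unitary on $L^2(\R^d)$ under Assumption~\ref{hyp:V} (see \cite{Fujiwara}), so that $u^\eps(t)=U^\eps(t)u_0^\eps$. Introduce the conjugated operators
\[
\mathcal A_j^\eps(t):=U^\eps(t)^{-1}\,x_j\,U^\eps(t),\qquad \mathcal B_j^\eps(t):=U^\eps(t)^{-1}\,(\eps\partial_{x_j})\,U^\eps(t),\qquad 1\le j\le d.
\]
Since $U^\eps(t)$ is unitary and conjugation by it is an algebra homomorphism, for every multi-index $\beta$ one has $\|x^\beta u^\eps(t)\|_{L^2}=\|\mathcal A^\eps(t)^\beta u_0^\eps\|_{L^2}$ and $\|(\eps\nabla)^\beta u^\eps(t)\|_{L^2}=\|\mathcal B^\eps(t)^\beta u_0^\eps\|_{L^2}$ (the $\mathcal A_j^\eps(t)$ commute with one another, being conjugates of the commuting operators $x_j$, so the first expression makes sense). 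Using the standard equivalence between the norm of $\Sigma_\eps^k$ and $\sum_{|\alpha|+|\beta|\le k}\|x^\alpha(\eps\nabla)^\beta\cdot\|_{L^2}$, with constants depending only on $d,k$, it therefore suffices to bound, uniformly in $\eps\in(0,1]$ and $t\in[0,T]$, the quantities $\|W^\eps(t)u_0^\eps\|_{L^2}$, where $W^\eps(t)$ ranges over all words of length at most $k$ in $\mathcal A_1^\eps(t),\dots,\mathcal A_d^\eps(t),\mathcal B_1^\eps(t),\dots,\mathcal B_d^\eps(t)$. As usual these manipulations are first carried out for $u_0^\eps\in\Sch(\R^d)$, the general case following by density once the estimates are $\eps$-uniform.

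Writing $H^\eps=-\tfrac{\eps^2}{2}\Delta+V$ and using $i\eps\partial_t U^\eps=H^\eps U^\eps$, a direct computation gives the closed operator system
\[
i\partial_t\mathcal A_j^\eps=\mathcal B_j^\eps,\qquad i\partial_t\mathcal B_j^\eps=(\partial_j V)(t,\mathcal A^\eps(t)),\qquad \mathcal A_j^\eps(0)=x_j,\quad \mathcal B_j^\eps(0)=\eps\partial_{x_j},
\]
the second right-hand side understood via the functional calculus of the commuting family $\mathcal A^\eps(t)$. This is where the at-most-quadratic growth enters: by Taylor's formula $\partial_j V(t,x)=\partial_j V(t,0)+\sum_k x_k\int_0^1(\partial_k\partial_j V)(t,sx)\,ds$, whence $(\partial_j V)(t,\mathcal A^\eps(t))=\partial_j V(t,0)\,\mathrm{Id}+\sum_k \mathcal A_k^\eps(t)\,M_{jk}^\eps(t)$, where $M_{jk}^\eps(t)=U^\eps(t)^{-1}\bigl(\int_0^1(\partial_k\partial_j V)(t,s\cdot)\,ds\bigr)U^\eps(t)$ is bounded on $L^2$ with norm at most $\sup_{[0,T]\times\R^d}|\nabla^2 V|$, uniformly in $\eps$ by Assumption~\ref{hyp:V}, and $t\mapsto\partial_j V(t,0)$ lies in $L^\infty_{\rm loc}(\R_+)$ (it equals $V(t,e_j)-V(t,0)$ minus a second-order Taylor remainder, both in $L^\infty_{\rm loc}$). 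For $k=1$, set $a^\eps(t)=\mathcal A^\eps(t)u_0^\eps$ and $b^\eps(t)=\mathcal B^\eps(t)u_0^\eps$ in $L^2(\R^d)^d$; the system above and a plain $L^2$ energy estimate give $\frac{d}{dt}\bigl(\|a^\eps\|_{L^2}^2+\|b^\eps\|_{L^2}^2\bigr)\le C_T\bigl(1+\|u_0^\eps\|_{L^2}^2+\|a^\eps\|_{L^2}^2+\|b^\eps\|_{L^2}^2\bigr)$, and Gronwall's lemma yields the $\eps$-uniform bound on $[0,T]$ since $\|a^\eps(0)\|_{L^2}+\|b^\eps(0)\|_{L^2}\lesssim\|u_0^\eps\|_{\Sigma_\eps^1}$.

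For general $k$ I would argue by induction on $k$. Differentiating $W^\eps(t)u_0^\eps$ for a length-$k$ word and using the system, $i\partial_t$ either turns a factor $\mathcal A_j^\eps$ into $\mathcal B_j^\eps$ (a new length-$k$ word), or turns a factor $\mathcal B_j^\eps$ into $\partial_j V(t,0)\,\mathrm{Id}+\sum_k\mathcal A_k^\eps\,M_{jk}^\eps(t)$; the first piece produces a scalar times a length-$(k-1)$ word, controlled by induction, and in the second piece one commutes $M_{jk}^\eps(t)$ to the far left. The only commutators that occur are $[\mathcal B_l^\eps,\mathcal A_m^\eps]=\eps\delta_{lm}\,\mathrm{Id}$ and $[\mathcal B_l^\eps,M_{jk}^\eps(t)]$, both operators bounded on $L^2$ uniformly in $\eps\in(0,1]$ — the latter involving third-order derivatives of $V$, which are bounded since all derivatives of order $\ge2$ are — and both lowering the word length by one. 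Collecting terms yields a differential inequality for $\sum_{|W|=k}\|W^\eps(t)u_0^\eps\|_{L^2}^2$ whose right-hand side is controlled, up to $C_T$, by $1$, the same sum, and quantities of word-length $\le k-1$ already handled by the induction hypothesis; Gronwall's lemma then closes the estimate, using $\sum_{|W|=k}\|W^\eps(0)u_0^\eps\|_{L^2}\lesssim\|u_0^\eps\|_{\Sigma_\eps^k}$. The main obstacle is exactly this last step: checking that every commutator produced by the iteration is a uniformly-in-$\eps$ bounded operator — which is precisely what the at-most-quadratic growth of $V$ guarantees — and that the resulting differential inequality genuinely closes. The case $k=1$ already contains the essential mechanism, namely that $(\mathcal A^\eps,\mathcal B^\eps)$ solves a system which is linear modulo uniformly bounded operators because $\nabla V$ is at most linear in $x$.
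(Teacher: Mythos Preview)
Your proof is correct and essentially dual to the paper's, though phrased in the Heisenberg rather than the Schr\"odinger picture. The paper applies $x_j$ and $\eps\partial_j$ directly to $u^\eps(t)$, derives the coupled equations
\[
\Bigl(i\eps\partial_t+\tfrac{\eps^2}{2}\Delta-V\Bigr)(\eps\nabla u^\eps)=(\eps\nabla V)\,u^\eps,\qquad
\Bigl(i\eps\partial_t+\tfrac{\eps^2}{2}\Delta-V\Bigr)(xu^\eps)=\eps^2\nabla u^\eps,
\]
and closes by the $L^2$ energy estimate and Gronwall, using $|\nabla V(t,x)|\lesssim 1+|x|$; the general $k$ is dispatched in one sentence (``by induction''). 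Your conjugated operators $\mathcal A^\eps,\mathcal B^\eps$ satisfy the \emph{same} system after conjugation by the unitary $U^\eps(t)^{-1}$, so the $k=1$ arguments are identical up to this relabeling.

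Where your write-up adds something is the higher-$k$ step: you make explicit the word combinatorics and the fact that every commutator generated ($[\mathcal B_l^\eps,\mathcal A_m^\eps]=\eps\delta_{lm}$ and $[\mathcal B_l^\eps,M_{jk}^\eps]$, the latter controlled by $\partial^3 V$) is bounded uniformly in $\eps$. The paper's induction, once unpacked, needs exactly the same mixed quantities $x^\alpha(\eps\nabla)^\beta u^\eps$ to close, so neither approach is cheaper there. The norm equivalence you invoke between $\|\cdot\|_{\Sigma_\eps^k}$ and $\sum_{|\alpha|+|\beta|\le k}\|x^\alpha(\eps\nabla)^\beta\cdot\|_{L^2}$ is correct with $\eps$-independent constants (reduce to $\eps=1$ by the parabolic rescaling $x\mapsto x/\sqrt\eps$), so the initial data are indeed controlled. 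In short: same mechanism, your packaging is the standard semiclassical one from \cite{Robert}, the paper's is the bare-hands version.
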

\begin{proof}
  The key point is that the functions $\eps\nabla u^\eps$ and $x u^\eps$
  satisfy a closed system of estimates. Indeed, $\eps\nabla$ does not
  commute with the equation, and $\eps\nabla u^\eps$ satisfies
  \begin{equation*}
    i\eps\d_t \(\eps\nabla u^\eps\) +\frac{\eps^2}{2}\Delta \(\eps\nabla
    u^\eps\) = V \eps\nabla  u^\eps +\(\eps\nabla V\)u^\eps.
  \end{equation*}
Similarly, 
  \begin{equation*}
  i\eps\d_t \(x u^\eps\) +\frac{\eps^2}{2}\Delta \(x
    u^\eps\) = V x  u^\eps +\eps^2 \nabla u^\eps.  
  \end{equation*}
The standard $L^2$ estimate then yields
\begin{align*}
  &\|\eps\nabla u^\eps(t)\|_{L^2}\le \|\eps\nabla u_0^\eps\|_{L^2} +
  \int_0^t \|(\nabla V)u^\eps(\tau)\|_{L^2}d\tau,\\
 &\|x u^\eps(t)\|_{L^2}\le \|x u_0^\eps\|_{L^2} +
  \int_0^t \|\eps \nabla u^\eps(\tau)\|_{L^2}d\tau.
\end{align*}
Now under Assumption~\ref{hyp:V}, for $T>0$ fixed, we have the
pointwise estimate
\begin{equation*}
  \left\lvert\nabla V(\tau,x)u^\eps(\tau,x)\right\rvert \le C(T)
  \(1+|x|\)\left\lvert u^\eps(\tau,x)\right\rvert, \quad 0\le \tau\le T.
\end{equation*}
Recalling that the $L^2$-norm of $u^\eps$ is bounded, Gronwall lemma,
applied to 
\begin{equation*}
  y(t) = \|\eps\nabla u^\eps(t)\|_{L^2}+\|x u^\eps(t)\|_{L^2},
\end{equation*}
yields the proposition in the case $k=1$. The general case follows by
induction. 
\end{proof}
\begin{example}
 If $u_0^\eps$ is of WKB type \eqref{eq:ci}, or more generally
  \eqref{eq:bkw}, with $\phi_0$ at most quadratic (in the sense of
  Assumption~\ref{hyp:V}), and $a_0\in H^k\cap \F (H^k)$, then the
  above assumptions are fulfilled. Note however that
  Proposition~\ref{prop:linearbound} is valid for all time, and in
  particular after the formation of caustics, if any. 
\end{example}
\begin{example}
  If
  \begin{equation*}
    u_0^\eps(x) = \frac{1}{\eps^{\theta
        d/2}}a_0\(\frac{x-q}{\eps^\theta}\)e^{i(x-q)\cdot p/\eps}, 
  \end{equation*}
with $q,p\in \R^d$, $\theta\in [0,1]$, and $a_0\in \Sch(\R^d)$, then
again, Proposition~\ref{prop:linearbound} is valid for all time. If
$\theta=0$, this datum is a particular WKB datum (with a linear
phase). If 
$\theta=1/2$, this means that an initial coherent state is
considered (see e.g. \cite{CoRO97}). If $\theta=1$, the initial datum
is concentrating at point $q$, 
which corresponds to a caustic reduced to one point (focal point; see
\cite{CaBook}).  
\end{example}
Recall that if the splitting operators are defined by
\begin{equation*}
  A = i\frac{\eps}{2} \Delta;\quad B = -\frac{i}{\eps}V,
\end{equation*}
then their Lie commutator is given by 
\begin{equation*}
  [A,B] = \nabla V\cdot \nabla + \frac{1}{2}\Delta V.
\end{equation*}
With  the norm
$\|u\|_{\Sigma^2_\eps}$  defined in
\eqref{eq:sigmak},  note the control
\begin{equation*}
  \|\eps \nabla V\cdot \nabla
u\|_{L^2}\lesssim 
\|u\|_{\Sigma^2_\eps},
\end{equation*}
which follows from Assumption~\ref{hyp:V}.
By working with the norm $\|u\|_{\Sigma^2_\eps}$, rather than with the
norm $\|u\|_{H^1_\eps}$  
defined in Section~\ref{sec:split}, and
used in \cite{BJM1,DeTh10}, the following
result is a direct consequence of \cite{DeTh10} and 
Proposition~\ref{prop:linearbound}:
\begin{proposition}
 Let $d\ge 1$, and $V$ satisfying Assumption~\ref{hyp:V}. Suppose that
 $\|u_0^\eps\|_{\Sigma_\eps^2}<\infty$. Then for all $T>0$, there
 exist $C,c_0$ independent of $\eps\in (0,1]$
such that for 
  all $\Dt\in (0,c_0]$, for all $n\in \N$ such that $n\Dt\in
  [0,T]$, 
\begin{equation*}
 \left\| \(Z^{\Dt}_{\eps}\)^n u_0^\eps
   -S^{t_n}_\eps
      u_0^\eps\right\|_{L^2(\R^d)}\le C \frac{\Dt}{\eps},
 \end{equation*}
where $S_\eps^tu_0^\eps = u^\eps(t)$ in \eqref{eq:linearSch}, and
$Z_\eps^t = e^{tB}e^{tA}$. 
\end{proposition}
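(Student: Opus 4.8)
The plan is to reproduce the Lady Windermere's fan argument of \cite{DeTh10}, tracking the single place where boundedness of the derivatives of $V$ is used, and to work throughout with the norm $\|\cdot\|_{\Sigma^2_\eps}$ of \eqref{eq:sigmak} in place of the norm $\|\cdot\|_{H^1_\eps}$ used there. Since \eqref{eq:linearSch} is linear, both the exact flow $S_\eps^t$ and the Lie splitting $Z_\eps^t=e^{tB}e^{tA}$ are unitary on $L^2(\R^d)$: here $e^{tA}=e^{it\eps\Delta/2}$ is the free Schr\"odinger propagator and $e^{tB}$ is multiplication by the unimodular factor $e^{-itV/\eps}$. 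Hence the telescoping \eqref{eq:lady1} applies directly; taking $L^2$ norms and using $\|Z_\eps^{\Dt}\|_{L^2\to L^2}=1$ together with $S_\eps^{j\Dt}u_0^\eps=u^\eps(t_j)$, we obtain
\[
  \left\| \(Z_\eps^{\Dt}\)^n u_0^\eps - S_\eps^{t_n}u_0^\eps \right\|_{L^2}\le\sum_{j=0}^{n-1}\left\| \(Z_\eps^{\Dt} - S_\eps^{\Dt}\)u^\eps(t_j)\right\|_{L^2}.
\]
It thus suffices to establish the local error estimate $\| (Z_\eps^{\Dt} - S_\eps^{\Dt})u^\eps(t_j)\|_{L^2}\le C\Dt^2/\eps$ with $C$ uniform in $\eps\in(0,1]$ and in $j$ with $t_j\le T$; summing the at most $T/\Dt$ terms then yields the claimed bound $C\Dt/\eps$.

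For the local error I would invoke the exact representation of Theorem~\ref{theo:error} with $A=i\tfrac{\eps}{2}\Delta$ and $B=-\tfrac{i}{\eps}V$. Since $A$, $B$ and $F=A+B$ are linear, $\partial_2\E_F=\E_F$ and $\partial_2\E_B=\E_B$ are unitary on $L^2$, so the formula reduces to
\[
  \left\|\(Z_\eps^{\Dt} - S_\eps^{\Dt}\)v\right\|_{L^2}\le\frac{\Dt^2}{2}\sup_{0\le\tau_2\le\tau_1\le\Dt}\left\| [B,A]\,e^{-i\tau_2 V/\eps}e^{i\tau_1\eps\Delta/2}v\right\|_{L^2}.
\]
The commutator is $\eps$-independent, $[B,A]=-[A,B]=-\bigl(\nabla V\cdot\nabla+\tfrac12\Delta V\bigr)$; writing $\nabla V\cdot\nabla w=\tfrac{1}{\eps}\bigl(\eps\nabla V\cdot\nabla w\bigr)$ and using the control $\|\eps\nabla V\cdot\nabla u\|_{L^2}\lesssim\|u\|_{\Sigma^2_\eps}$ recorded just above the statement, together with the boundedness of $\Delta V$ coming from Assumption~\ref{hyp:V}, we get
\[
  \left\| [B,A]\,w\right\|_{L^2}\lesssim\frac{1}{\eps}\left\| w\right\|_{\Sigma^2_\eps},\qquad 0<\eps\le 1.
\]
Hence it only remains to bound $\|e^{-i\tau_2 V/\eps}e^{i\tau_1\eps\Delta/2}u^\eps(t_j)\|_{\Sigma^2_\eps}$ uniformly in $\eps$, for all $\tau_1,\tau_2\in[0,c_0]$ and $t_j\le T$.

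This is where Proposition~\ref{prop:linearbound} and the elementary nature of $e^{tA}$ and $e^{tB}$ enter. Proposition~\ref{prop:linearbound} applied with $k=2$ gives $\sup_{t\in[0,T]}\|u^\eps(t)\|_{\Sigma^2_\eps}\le C(T)$ uniformly in $\eps$. The free propagator obeys the Galilean identity $e^{-it\eps\Delta/2}\,x\,e^{it\eps\Delta/2}=x+it\eps\nabla$ and commutes with $\eps\nabla$, whence $\|e^{i\tau_1\eps\Delta/2}g\|_{\Sigma^2_\eps}\le C(1+c_0+c_0^2)\|g\|_{\Sigma^2_\eps}$ for $\tau_1\le c_0$ (this is the $V=0$ case of Proposition~\ref{prop:linearbound}). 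For the phase multiplication, $\eps\nabla\bigl(e^{-i\tau_2 V/\eps}g\bigr)=e^{-i\tau_2 V/\eps}\bigl(\eps\nabla g-i\tau_2(\nabla V)g\bigr)$, so a further application of $\eps\nabla$ produces in $(\eps\nabla)^2\bigl(e^{-i\tau_2 V/\eps}g\bigr)$ only the extra terms $-i\tau_2\eps(\Delta V)g$, $-2i\tau_2\,\nabla V\cdot\eps\nabla g$ and $-\tau_2^2|\nabla V|^2 g$; since $\tau_2\le c_0$, $\nabla V$ grows at most linearly and $\Delta V$ is bounded, each of these is controlled by $c_0\,\|g\|_{\Sigma^2_\eps}$ (the middle one again via the control quoted above, the last via the $|x|^2$-weighted part of the norm), so $\|e^{-i\tau_2 V/\eps}g\|_{\Sigma^2_\eps}\le C\|g\|_{\Sigma^2_\eps}$ for $\tau_2\le c_0$. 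Combining the three estimates gives $\|e^{-i\tau_2 V/\eps}e^{i\tau_1\eps\Delta/2}u^\eps(t_j)\|_{\Sigma^2_\eps}\le C(T)$, hence $\| (Z_\eps^{\Dt} - S_\eps^{\Dt})u^\eps(t_j)\|_{L^2}\le C(T)\Dt^2/\eps$, and the proposition follows by summation over $j$. The only genuinely delicate point is the bookkeeping of the powers of $\eps$ — in the commutator bound and in the $\Sigma^2_\eps$-stability of the phase multiplication; once $\|\cdot\|_{\Sigma^2_\eps}$ is put in place of $\|\cdot\|_{H^1_\eps}$, the remaining steps are exactly those of \cite{DeTh10}.
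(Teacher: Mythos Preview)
Your proposal is correct and is precisely the argument the paper has in mind: the paper does not give a detailed proof, stating only that the result is ``a direct consequence of \cite{DeTh10} and Proposition~\ref{prop:linearbound}'' once one works with $\|\cdot\|_{\Sigma^2_\eps}$ in place of $\|\cdot\|_{H^1_\eps}$; you have spelled out exactly these steps --- the linear Lady Windermere's fan, the local error via Theorem~\ref{theo:error} with the commutator $[A,B]=\nabla V\cdot\nabla+\tfrac12\Delta V$, and the $\Sigma^2_\eps$-stability of $e^{tA}$, $e^{tB}$ and $S_\eps^t$.
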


\subsection*{Acknowledgements}
The author is grateful to Christophe Besse and St\'ephane
Descombes for precious references, and to the referees and to 
Luca Dieci as an editor, for their
constructive remarks.

\end{document}